\documentclass[a4paper,11pt]{article}
\usepackage{amsmath,amssymb,amsthm}
\usepackage[top=2.5cm,bottom=2.5cm,left=2.5cm,right=2.5cm]{geometry}
\usepackage{enumitem}
\usepackage{mathrsfs}
\usepackage{graphicx}
\usepackage{xcolor}
\usepackage{graphicx,float,epstopdf}
\usepackage{hyperref}
\usepackage[T1]{fontenc}
\usepackage[english]{babel}
\usepackage{dsfont}
\usepackage{enumitem}
\usepackage{footmisc}

\newtheorem{theorem}{Theorem}[section]
\newtheorem{lemma}[theorem]{Lemma}
\newtheorem{corollary}[theorem]{Corollary}

\newtheorem{remark}[theorem]{Remark}
\newtheorem{example}[theorem]{Example}
\numberwithin{equation}{section}

\title{On box dimension of the graphs of the generalized Riemann-type functions}
\author{\sl Yurong Wu$^{a}$ and Guoping Zhan$^{b*}$}

\date{{\small \em $^a$School of Mathematical Sciences, Zhejiang University of Technology,\\
Hangzhou, 310023, P. R. China \\
e-mail: wuyurong2022@zjut.edu.cn; wuyurong2003@163.com\\
\ $^b$\thanks{Corresponding author: Guoping Zhan (e-mail: zhangp@zjut.edu.cn)}
School of Mathematical Sciences, Zhejiang University of Technology,\\
Hangzhou, 310023, P. R. China
\\e-mail: zhangp@zjut.edu.cn}}

\begin{document}

\maketitle
\begin{abstract}
We investigate the box dimension of the graphs of a class of continuous periodic functions
$G_\delta(x)=\sum_{n=1}^{\infty}g(n^{2}x)n^{-1-\delta}$ with 1-periodic Lipschitz functions $g$
and $0<\delta\le 1$, which generalizes the result of the classical Riemann function corresponding to
$g(x)=\sin(2\pi x)$ and $\delta=1$. More precisely, we first prove that the lower box dimension of the graph of $G_{\delta}$ is no less than $\frac74-\frac{\delta}{2}$ when the Fourier coefficients of $g$ satisfy an arithmetic non-vanishing condition related to the distribution of quadratic residues. This result is new and non-trivial even when $g$ has a finite Fourier expansion, highlighting the intrinsic arithmetic complexity of the series. Secondly, if $g'$ is Lipschitz continuous on $\mathbb{R}$, we show that the upper box dimension does not exceed \(\frac74-\frac{\delta}{2}\), which extends earlier work of Chamizo and C\'ordoba and reveals deep connection between the regularity
of $g$ and the fractal dimension of the associated Riemann-type series. In the end, we give some illustrative examples and propose
some further problems.

\end{abstract}

\medskip
\noindent\textbf{Keywords:} Riemann's function, box dimension, Farey dissection, Gauss sums, quadratic reciprocity,
Dirichlet character.

\tableofcontents

\section{Introduction}

\subsection*{Historical background}
In 1872, Karl Weierstrass first proved that $$W(x)=\sum_{n=0}^{\infty} a^n \cos(b^n\pi x)$$
called Weierstrass function later, is continuous and nowhere differentiable on $\mathbb{R}$ for
$a\in(0,1)$ and odd integer $b>1$ with $ab>1+3\pi/2$. More generally, Hardy \cite{Hardy1916} showed in 1916
that $W$ is nowhere differentiable for $a\in(0,1)$ and $b>1$ with $ab\ge1$.

\par For over a century and a half, the classical Weierstrass function
has attracted number of mathematicians. For instance, Hunt \cite{Hunt1998} showed that the graph of
$$w_\Theta(x)=\sum_{n=0}^\infty a^n\cos(2\pi(b^n x+\theta_n))$$ with i.i.d. uniform phases
$\theta_n\in\Theta=\{\theta_0,\theta_1,...\}$  has Hausdorff dimension $D=2+\frac{\log a}{\log b}$ almost surely.
The same result extends to a broad class $f_\Theta(x)=\sum_{n=0}^\infty a_n g(b_n x+\theta_n)$ with periodic function $g$
under mild conditions.

\par Determining the exact Hausdorff dimension of the graph of Weierstrass functions remains a long-standing
open problem before the landmark paper Shen \cite{Shen2018}. Detailly, Shen \cite{Shen2018} proved that the graph
of the classical Weierstrass function
$$W_{\lambda,b}(x)=\sum_{n=0}^{\infty} \lambda^n \cos(2\pi b^n\pi x)$$
has Hausdorff dimension $D=2+\frac{\log \lambda}{\log b}$ for each integer $b\ge2$ and $\lambda\in(1/b,1)$,
settling a long-standing conjecture. The same result is extended to a broad class of functions
$f_{\lambda,b}^{\phi}(x)=\sum_{n=0}^{\infty}\lambda^n\phi(b^nx)$ with non-constant $C^2$ periodic function
$\phi$, provided $\lambda b$ is sufficiently close to 1.

\par Furthermore, Ren and Shen \cite{Shen-Ren2021} made a decisive breakthrough.
They considered the general Weierstrass-type function
$$W(x)=\sum_{n=0}^{\infty} \lambda^n \phi(b^n x)$$
for a $\mathbb{Z}$-periodic real analytic function $\phi$ with integer
$b\ge2$ and $1/b<\lambda<1$. They proved the following dichotomy theorem:
either $W$ is real analytic, or its graph has Hausdorff dimension
\[\dim_H(graph(W))=2+\log_b\lambda.\]
This work extends the classical result for the cosine Weierstrass function and is proved by a combination
of Ledrappier-Young theory, Hochman's entropy increment criterion, and a novel transversality analysis,
together with a renormalization argument based on regulating periods.

\par Another fruitful direction involves the renowned Riemann's function
\begin{equation*}
F(x)=\sum_{n=1}^{\infty}\frac{\sin(2\pi n^{2}x)}{n^{2}}.
\end{equation*}
However, as Hardy \cite{Hardy1916} pointed out, the non-differentiable question concerning Riemann's function is
much more difficult than any of those connected
with Weierstrass's function, owing to the comparatively slow increase of the sequence $n^2$,
similar difficulty also exists for the study of fractal dimension.
Hardy \cite{Hardy1916} also demonstrated that, despite the initial belief of being nowhere differentiable,
the function $F$ is indeed nondifferentiable at every irrational point and at certain rational points.
Later, Gerver \cite{Gerver1970} verified differentiability at infinitely many rational points
with denominator $\equiv 2\pmod 4$.

\par Furthermore, the geometric complexity of the graph of such functions has been intensively studied.
For a continuous function $f:[0,1]\to\mathbb{R}$ denote by $graph(f)$ its graph.
The \emph{box dimension} of $graph(f)$ is defined as
\[\dim_{\mathrm{B}}\bigl(graph(f)\bigr)=\lim_{N\to\infty}\frac{\log A_N(f)}{\log N}\]
with $A_N(f)$ the number of squares of side length $1/N$ intersecting $graph(f)$,
and replace limit by upper and lower limits when necessary.

\par For instance, Jaffard \cite{Jaffard1996}
 established the multifractal nature of Riemann's function by determining pointwise H\"{o}lder spectrum of
 its singularities, and linked the H\"{o}lder exponent at irrationals to the corresponding irrationality exponent.
Subsequently, by Farey dissection Chamizo and C\'{o}rdoba \cite{Chamizo1999} showed
$\dim_{\mathrm{B}}\bigl(graph(F_{\delta})\bigr)=\frac{7}{4}-\frac{\delta}{2}$
for $$F_{\delta}(x)=\sum_{n=1}^{\infty}\frac{\sin(2\pi n^{2}x)}{n^{1+\delta}}$$ with $0<\delta<1$,
using lower bounds from quadratic residues and upper bounds from Weyl-type incomplete Gauss sums;
the cosine case is analogous.

\par After that, this subject was broadened in several directions.
Chamizo \cite{Chamizo2004} further obtained the box dimension of graphs of elliptic curves, theta functions, and modular forms by Fourier series arising as fractional integrals of automorphic forms. Later, C\'ordoba \cite{Cordoba2008} surveyed the interface between number theory and harmonic analysis, covering Gaussian trigonometric series, convergence of rearranged Fourier series, and fractal dimensions of Riemann-type graphs.
Also, Eceizabarrena \cite{Eceizabarrena2020, Eceizabarrena2021} studied the geometry of the complex Riemann function
$$\phi(t)=\sum_{k\in\mathbb{Z}}\frac{e^{-4\pi^2ik^2t}-1}{-4\pi^2k^2},$$
which arises as the trajectory of a corner of a polygonal vortex filament under the binormal flow.
The author proved in \cite{Eceizabarrena2020} that $\phi(\mathbb{R})$ has no tangent anywhere in the sense
of Falconer's geometric tangent definition, and in \cite{Eceizabarrena2021} that
 $1\le\dim_{\mathcal{H}}\phi(\mathbb{R})\le4/3$ on the Hausdorff dimension of its image,
together with geometric properties such as absence of tangents, respectively.

\par Motivated by \cite{Chamizo1999}, in the present paper we consider a natural generalization of the classical Riemann function defined by
\[G_{\delta}(x)=\sum_{n=1}^{\infty}\frac{g(n^{2}x)}{n^{1+\delta}}\]
with $0<\delta\le1$ and $1$-periodic continuous function $g:\,\mathbb{R}\to\mathbb{R}$. Under mild arithmetic conditions
on the Fourier coefficients of $g$, we obtain a sharp lower bound
for the box dimension of $graph(G_{\delta})$. Specifically, let $g(x)=\sum_{k\in\mathbb{Z}}C_ke^{2\pi i kx}$
with $\sum_{k=1}^{\infty}|C_k|k^{\frac{\varepsilon+\delta}{2}}<\infty$ for some $\varepsilon>0$.
If there exists a positive square-free integer $d_0$ such that
\begin{equation}\label{nonvanishing}\sum_{\substack{k=m^{2}d_0 \\ m\in\mathbb{N}^+}}C_k k^{\delta/2}\neq0,\end{equation}
then
\[\underline{\dim}_{\mathrm{B}}\bigl(graph(G_{\delta})\bigr)\ge\frac{7}{4}-\frac{\delta}{2}.\]

\par Moreover, if $g'$ is Lipschitz continuous, then
\[\overline{\dim}_{\mathrm{B}}\bigl(graph(G_{\delta})\bigr)\le\frac{7}{4}-\frac{\delta}{2}.\]
Consequently, under the non-vanishing condition (\ref{nonvanishing}), the exact box dimension is
\[\dim_{\mathrm{B}}\bigl(graph(G_{\delta})\bigr)=\frac{7}{4}-\frac{\delta}{2}\]
for $0<\delta\le1$. This extends the result of \cite{Chamizo1999}
and establishes a direct link between the regularity of the seed
function $g$ and the fractal dimension of the associated Riemann-type series.

\par The novelty of our method is an arithmetic mechanism for the lower bound, based on decomposing the function at rational points with prime denominators and employing the theory of quadratic residue and Gauss sum. A central ingredient is Lemma 3.3, which, under the non-vanishing condition, ensures infinitely many primes for which a weighted sum of Legendre symbol is bounded away from zero; its proof uses quadratic reciprocity and careful analysis of square-free integers. For the upper bound, we use a Farey partition of order $N^{1/2}$, together with Weyl-type estimates for incomplete sums of $g(n^2x)$ near rationals.

\par Finally, we address the critical case in which the weighted sums vanish for every square-free integer $d_0$.
Examples including finite trigonometric polynomials, functions involving modular form, and
$L$-functions illustrate that the vanishing condition can lead either to a collapse of the dimension or,
in the presence of extra symmetries, to a more subtle behaviour. These observations hint at connections with
the Prime Number Theorem and analytic properties of $L$-functions, and suggest further work at the interface
between number theory and fractal geometry.

\subsection*{Main results}
In this paper we establish sharp lower and upper bounds for the box dimension of \(graph(G_{\delta})\)
under mild arithmetic and decay conditions on the Fourier coefficients of \(g\).
\begin{theorem}\label{thm:lower}
Let $g(x)=\sum_{k\in\mathbb{Z}}C_ke^{2\pi i k x}$ with $\sum_{k=1}^{\infty}|C_k|k^{\frac{\varepsilon+\delta}{2}}<\infty$
for some $\varepsilon>0$. If there exists a positive square-free
integer $d_0$ such that
\[\sum_{\substack{k=m^{2}d_0\\ m\in\mathbb{N}^+}}C_k\,k^{\delta/2}\neq0,\]
then
\[\underline{\dim}_{\mathrm{B}}\bigl(graph(G_\delta)\bigr)\ge\frac74-\frac{\delta}{2}.\]
\end{theorem}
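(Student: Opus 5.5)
The lower bound will come from oscillation of $G_\delta$ near rationals $a/q$ with prime denominator $q$. We use the standard criterion: if $\operatorname{osc}(f,I)$ denotes the oscillation of $f$ on an interval $I$, then
\[
A_N(f)\gtrsim N\sum_{I}\operatorname{osc}(f,I),
\]
where the sum runs over the $N$ intervals of length $1/N$. So it suffices to find, for every large $N$, about $N^{3/4}$ disjoint intervals of length $1/N$. On each of them $G_\delta$ should oscillate by at least $c\,N^{-1/4-\delta/2}$ (up to $N^{-\varepsilon'}$ losses we can absorb). Then
\[
A_N\gtrsim N\cdot N^{3/4}\cdot N^{-1/4-\delta/2}=N^{3/2-\delta/2}.
\]
Since this loses a factor $N^{1/4}$ against the target, the count of intervals must be adjusted. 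We will take primes $q\asymp N^{1/2}$ and all $a$ with $1\le a<q$, giving $\asymp N/\log N$ points. Near each $a/q$ we want $\operatorname{osc}\gtrsim q^{-1/2-\delta}\cdot q^{1/2}$ measured at scale $h\asymp 1/N\asymp q^{-2}$. Heuristically $G_\delta(a/q+h)-G_\delta(a/q)\approx q^{-1/2}$ times a Gauss-sum main term times $h^{\delta/2}$, which at $h=q^{-2}$ is $q^{-1/2-\delta}\asymp N^{-1/4-\delta/2}$. Summing $N/\log N$ such intervals gives
\[
A_N\gtrsim N\cdot N^{1-o(1)}\cdot N^{-1/4-\delta/2}=N^{7/4-\delta/2-o(1)}.
\]
This is exactly the target, and since it holds for all large $N$ it bounds the lower box dimension.

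The key step is an asymptotic for the increment. Expand
\[
G_\delta(x)=\sum_k C_k\sum_n n^{-1-\delta}e^{2\pi i k n^2 x}
\]
and, for fixed $k$, split $n$ into residue classes mod $q$. Applying Poisson summation (or Euler--Maclaurin) to the smooth weight $n^{-1-\delta}e^{2\pi i k n^2h}$ gives
\[
\sum_n n^{-1-\delta}e^{2\pi ikn^2(a/q+h)}=\frac{S(ka,q)}{q}\int_0^\infty t^{-1-\delta}\bigl(e^{2\pi ik t^2h}-1\bigr)\,dt+(\text{value at }h=0)+E.
\]
Here $S(ka,q)=\sum_{r \bmod q}e^{2\pi i kar^2/q}$ is the quadratic Gauss sum. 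The integral equals $c_\delta (kh)^{\delta/2}$ with an explicit nonzero complex constant $c_\delta$, and $E$ is controlled by the standard bound on the nonzero Poisson frequencies, of size $O(q^{1/2}\cdot\ldots)$, which is smaller at $h\asymp q^{-2}$. For prime $q\nmid k$ we have
\[
S(ka,q)=\Bigl(\tfrac{ka}{q}\Bigr)\varepsilon_q\sqrt q .
\]
Hence the main term of the increment is
\[
\varepsilon_q q^{-1/2}\Bigl(\tfrac aq\Bigr)c_\delta h^{\delta/2}\sum_k C_k k^{\delta/2}\Bigl(\tfrac kq\Bigr),
\]
and the hypothesis $\sum_k|C_k|k^{(\varepsilon+\delta)/2}<\infty$ makes the $k$-sum and the error terms absolutely convergent uniformly in $q$. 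The real part (taking $g$ real, pairing $\pm k$) is handled the same way. It then suffices to compare $h=q^{-2}$ with $h=2q^{-2}$, say, to obtain a genuine oscillation of size $\gg q^{-1/2-\delta}|\Lambda(q)|$, where
\[
\Lambda(q)=\sum_k C_k k^{\delta/2}\Bigl(\tfrac kq\Bigr).
\]

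The main obstacle is the arithmetic one: showing that $|\Lambda(q)|\ge c>0$ for a positive proportion (or at least $N^{1/2-o(1)}$) of primes $q\asymp N^{1/2}$. Write $k=m^2d$ with $d$ square-free. For $q\nmid m$ we have $\bigl(\tfrac kq\bigr)=\bigl(\tfrac dq\bigr)$, so
\[
\Lambda(q)=\sum_{d\ \text{square-free}}\Bigl(\tfrac dq\Bigr)B_d,\qquad B_d=\sum_m C_{m^2d}(m^2d)^{\delta/2},
\]
up to a tail over $q\mid m$ that tends to $0$ as $q\to\infty$. By hypothesis $B_{d_0}\neq0$, and $\sum_d|B_d|<\infty$. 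Truncate to $d\le D$ with tail $<|B_{d_0}|/4$. The finitely many characters $q\mapsto\bigl(\tfrac dq\bigr)$, $d\le D$ square-free, are, via quadratic reciprocity, distinct nonprincipal (except $d=1$) characters mod $4\prod_{p\le D}p$, hence linearly independent. By Dirichlet's theorem on primes in progressions, the fraction of primes in each class mod this modulus is equidistributed. So the mean of $|\sum_{d\le D}(\tfrac dq)B_d|^2$ over primes equals $\sum_{d\le D}|B_d|^2\ge|B_{d_0}|^2$. This forces $|\Lambda(q)|\ge|B_{d_0}|/2$ on a set of primes of positive lower density (using boundedness of $\Lambda$ to convert the second-moment bound into a density bound). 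This positive density, together with the prime number theorem count $\asymp q/\log q$ in $[Q,2Q]$, closes the argument. I expect the Poisson error bookkeeping to be routine and this Dirichlet-character independence step to be the crux.
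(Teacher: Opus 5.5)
Your arithmetic step is sound and is essentially the paper's Lemmas~\ref{lem:leg-same}--\ref{lem:legendre-sum}: a second moment of $\sum_d B_d(\frac dq)$ via orthogonality of quadratic characters, plus Dirichlet's theorem. One correction there: for $k<0$ the weight is $C_kM_k$ with $M_{-k}=\overline{M_k}$, not a literal $k^{\delta/2}$, so negative square-free $d$ must enter as their own characters.

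\textbf{The gap is in the analytic step.} You claim a pointwise expansion of the increment at $a/q$ with $h\asymp q^{-2}$ whose Poisson error is of lower order, and this is false. Take one frequency $k\ge1$ with $q\nmid ka$, let $0<\delta<1$, $e(x)=e^{2\pi ix}$ and $F(t)=(e(kht^2)-1)|t|^{-1-\delta}$. Splitting $n$ mod $q$ and applying Poisson gives exactly
\[
\sum_{n\ge1}\frac{e(kan^2/q)\bigl(e(kn^2h)-1\bigr)}{n^{1+\delta}}
=\frac{\epsilon_q}{2\sqrt q}\Bigl(\frac{ka}{q}\Bigr)\sum_{\ell\in\mathbb{Z}}\widehat F\Bigl(\frac{\ell}{q}\Bigr)\,e\Bigl(-\frac{\overline{4ka}\,\ell^{2}}{q}\Bigr),
\qquad
\widehat F\Bigl(\frac{\ell}{q}\Bigr)=-2(kh)^{\delta/2}\,\Phi\Bigl(\frac{\ell}{q\sqrt{kh}}\Bigr).
\]
Here $\overline{4ka}$ is the inverse of $4ka$ modulo $q$, and $\Phi$ is as in the proof of Lemma~\ref{lem:error estimate}. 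When $q\sqrt{kh}\asymp 1$, $\Phi$ is evaluated at arguments of order one. So the $\ell\neq0$ terms are of the same order as the $\ell=0$ term, and they vary with $a$ through $\overline{4ka}$. They are not an error term, no pointwise bound $\gg q^{-1/2-\delta}|\Lambda(q)|$ follows, and comparing $h=q^{-2}$ with $h=2q^{-2}$ does not change this.

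Taking $h=cq^{-2}$ with $c$ small does rescue finitely many $k$, since the dual terms are then $O((kc)^{\beta/2})$ relative to the main term. That is enough for trigonometric polynomials, but not for the stated hypothesis. Under only $\sum|C_k|k^{(\varepsilon+\delta)/2}<\infty$, the frequencies $k\gg 1/c$ admit only pointwise bounds like $q^{-1/2-\delta}(kc)^{(1+\delta)/2}$ or the trivial $2\zeta(1+\delta)$. These do not sum to $o(q^{-1/2-\delta})$ unless $\varepsilon$ is large. You also never treat the $a$-independent terms with $q\mid k$, which can be as large as $|C_q|q^{-\delta/2}$.

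\textbf{The missing idea is to aggregate over $a$ before approximating.} The box count only needs $\sum_a|\Delta G_\delta(a/p)|$, not a pointwise asymptotic. The paper sums $\Delta G_\delta(a/p)$ over $a\in R(p)$. Since $e(kn^2/p^2)=1$ when $p\mid n$, the phases $e(kn^2a/p)$ collapse exactly to $\frac12(\epsilon_p\sqrt p(\frac kp)-1)$ or $\frac{p-1}{2}$, independent of $n$. What remains is the smooth sum $S_k(p)=\sum_n n^{-1-\delta}(1-e(kn^2/p^2))$, whose Poisson dual frequencies sit at $mp$ rather than at $\ell$.

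That is why Lemma~\ref{lem:error estimate} gives $|p^{\delta}S_k(p)-M_k|\lesssim|k|^{(\alpha+\delta)/2}p^{-\alpha}$ with $\alpha<\varepsilon$, a $k$-growth the hypothesis can absorb. The $p\mid k$ terms are handled on a density-one set of primes by Lemma~\ref{positive density}, and Lemma~\ref{lem:fip} controls overlaps. The result is $\sum_a|\Delta G_\delta(a/p)|\ge|\sum_{a\in R(p)}\Delta G_\delta(a/p)|\gtrsim p^{1/2-\delta}$, which is exactly your target per prime.

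Within your framework the quickest repair is to weight by the Legendre symbol. The identity $\sum_{a=1}^{q-1}(\frac aq)\Delta G_\delta(\frac aq)=\epsilon_q\sqrt q\sum_k C_k(\frac kq)S_k(q)$ holds exactly. It leaves only the smooth sums $S_k(q)$, so Lemma~\ref{lem:error estimate} applies, and it removes the $q\mid k$ terms outright.
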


\begin{remark}
{\rm The non-vanishing condition {\rm(\ref{nonvanishing})} holds for many natural choices of $g$.
For example, for any nonzero $C_1=a+bi\in\mathbb{C}$, define $g(t)=C_1e^{2\pi it}+C_{-1}e^{-2\pi it}
=2a\cos(2\pi t)-2b\sin(2\pi t)$. Then its Fourier coefficients satisfy {\rm(\ref{nonvanishing})}
with $d_0=1$ for every $\delta>0$. Thus Theorem \ref{thm:lower} recovers the lower bound
$\dim_B\ge5/4$ for the classical Riemann function.}
\end{remark}

\begin{theorem}\label{thm:upper}
Let \(g:\mathbb{R}\to\mathbb{R}\) be a {\rm 1}-periodic function. If $g'$ is Lipschitz continuous, then
\[\overline{\dim}_{\mathrm{B}}\bigl(graph(G_\delta)\bigr)\le\frac74-\frac{\delta}{2}.\]
\end{theorem}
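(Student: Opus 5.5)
The upper bound will come from the standard oscillation criterion for box dimension: if $\mathrm{osc}(f,I)$ denotes the oscillation of $f$ on an interval $I$, then
\[A_N(f)\le 2N+N\sum_{j=0}^{N-1}\mathrm{osc}\bigl(f,[j/N,(j+1)/N]\bigr).\]
So it suffices to show $\sum_j \mathrm{osc}(G_\delta,I_j)\ll N^{3/4-\delta/2+\epsilon}$ for every $\epsilon>0$.

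\textbf{Step 1: reduce to finite trigonometric pieces.} Since $g'$ is Lipschitz, $g\in C^{1,1}$, so its Fourier coefficients satisfy $|C_k|\ll k^{-2}$. Subtract the mean $C_0$, which only contributes a constant, and expand
\[G_\delta(x)=\sum_{k\neq0}C_k\sum_{n\ge1}n^{-1-\delta}e^{2\pi i k n^2x}=\sum_{k\ne 0}C_k F^{(k)}(x),\]
where $F^{(k)}(x)=\sum_n n^{-1-\delta}e(kn^2x)=F^{(1)}(kx)$. The oscillation is subadditive. Since $F^{(1)}$ has period 1, the rescaling $x\mapsto kx$ turns the sum of oscillations of $F^{(k)}$ over the $N$ intervals $I_j$ into the sum of oscillations of $F^{(1)}$ over $N$ intervals of length $k/N$. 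Each of these is a union of $k$ intervals of length $1/N$, taken over $k$ periods.

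This gives
\[\sum_j\mathrm{osc}(F^{(k)},I_j)\le \sum_{\text{$kN$ intervals of length }1/N}\mathrm{osc}(F^{(1)},\cdot)\]
over $k$ periods, which equals $k\cdot S(N)$, where $S(N)=\sum_j\mathrm{osc}(F^{(1)},I_j)$. Hence
\[\sum_j\mathrm{osc}(G_\delta,I_j)\le\Bigl(\sum_k|C_k|\,|k|\Bigr)S(N).\]
The weight $\sum|C_k||k|$ is the subtle point: with $|C_k|\ll k^{-2}$ alone, $\sum |k|^{-1}$ diverges logarithmically.

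\textbf{Step 2: refine the reduction.} I would truncate at $|k|\le N$ and use the trivial bound $\mathrm{osc}\le 2\|F^{(k)}\|_\infty\ll1$ per interval for $|k|>N$. The tail then contributes $N\sum_{|k|>N}|C_k|\ll 1$, which is fine. For $|k|\le N$ the sum $\sum_{|k|\le N}|C_k||k|\ll\log N$, giving only an $N^\epsilon$ loss, which the dimension bound absorbs. Alternatively, Parseval for $g''\in L^\infty\subset L^2$ gives $\sum k^4|C_k|^2<\infty$, and Cauchy–Schwarz yields $\sum|C_k||k|<\infty$ outright. Either route works, and this is where the $C^{1,1}$ hypothesis enters.

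\textbf{Step 3: the core estimate for $F^{(1)}=F_\delta$.} It remains to show $S(N)\ll N^{3/4-\delta/2+\epsilon}$, i.e.\ the Chamizo–Córdoba upper bound, including complex exponentials and the endpoint $\delta=1$. I would redo their argument.
\begin{itemize}
\item Take the Farey dissection of order $Q=N^{1/2}$. Each $x$ lies near some $a/q$ with $q\le Q$, $x=a/q+h$, $|h|\le 1/(qQ)$.
\item For $|x-y|\le1/N$, split the series at $n\approx M$. The tail beyond $M$ is controlled by partial summation together with the Weyl/Gauss-sum bound
\[\sum_{n\le L}e(n^2x)\ll \frac{L}{\sqrt q\,(1+L^2|h|)^{1/2}}+\sqrt q\log q.\]
The head below $M$ is controlled by the mean value theorem, $|e(n^2x)-e(n^2y)|\le 2\pi n^2/N$.
\item Optimizing $M$ in terms of $q$ and $h$ gives $\mathrm{osc}(F_\delta,I)\ll q^{-1/2}|h|^{\delta/2}$-type bounds, roughly $N^{-\delta/2}q^{-1/2}$ near $a/q$, plus $q^{\delta/2}$-type error terms.
\item Sum over the intervals in each Farey arc (about $N/(qQ)$ of them) and then over $a,q\le Q$ to reach the target exponent $N^{3/4-\delta/2}$.
\end{itemize}

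\textbf{Main obstacle.} The main obstacle is this Farey bookkeeping, especially the regime $|h|<1/N$ near the rational itself and the endpoint $\delta=1$. There a logarithm may appear, which is harmless for the dimension. I expect the balancing to give exactly $\sum_{q\le Q}q\cdot(N/(qQ))\cdot N^{-\delta/2}\cdot q^{-1/2}\cdot(\text{stuff})\asymp N^{3/4-\delta/2}$.
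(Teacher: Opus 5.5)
\textbf{Verdict: genuine gap in Step~3.} Your reduction in Steps~1--2 is correct and differs from the paper's route, but the Step~3 sketch does not deliver the bound the whole argument depends on.

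\textbf{Steps 1--2.} The rescaling bound $\sum_j\mathrm{osc}(F^{(1)}(k\,\cdot),I_j)\le |k|\,S(N)$ is right. So is $\sum_k|k||C_k|<\infty$, which follows from Parseval for $g''\in L^\infty$ and Cauchy--Schwarz. Together they show that the theorem for every $g$ with $g'$ Lipschitz follows formally from the single case $g(x)=e(x)$. The paper instead keeps $g$ and runs Weyl estimates on $\sum g(n^2x)$ and $\sum g'(n^2x)$ directly (Lemma~\ref{gn2x-a/p}), at a cost of $p^{\varepsilon}$. After your reduction, however, all the analytic content sits in Step~3. There you need $\mathrm{osc}(F^{(1)},I)\ll N^{-\delta/2+\varepsilon}q^{-1/2}$ on every interval of the arc at $a/q$, and the mechanism you sketch does not give it.

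\textbf{The head is handled too crudely.} Bounding it termwise by $|e(n^2x)-e(n^2y)|\le 2\pi n^2/N$ gives $\asymp M^{2-\delta}/N$, with no $q^{-1/2}$ saving.
\begin{itemize}
\item For $M\le|h|^{-1/2}$, your tail bound (like any Weyl-type bound) contains the term $q^{-1/2}M^{-\delta}$. This term cannot be removed: it is the true size of $\sum_{n>M}n^{-1-\delta}e(an^2/q)$ for odd $q$ and $M\gg q\log q$, coming from the mean $G(a;q)/q$.
\item If instead you take $M>|h|^{-1/2}$ to use decay in $h$, then for $|h|\le q^{1/2}/N$ you have $M>N^{1/2}q^{-1/4}$, and the head alone is already large.
\item Either way, on intervals within $q^{1/2}/N$ of $a/q$ the best your scheme gives is $\min_M\bigl(M^{2-\delta}/N+q^{-1/2}M^{-\delta}\bigr)\asymp N^{-\delta/2}q^{-1/2+\delta/4}$. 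That loses a factor $q^{\delta/4}$.
\item For $q\ge N^{1/3}$ the whole Farey arc lies in this range. The arcs with denominator $q$ contain $\asymp N^{1/2}\varphi(q)/q$ intervals. Summing over $N^{1/3}\le q\le N^{1/2}$ gives $N^{3/4-3\delta/8}$, i.e.\ only $\overline{\dim}_{\mathrm{B}}\le\frac74-\frac{3\delta}{8}$.
\end{itemize}

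\textbf{The missing idea: use cancellation in the head too.} Write $e(n^2x)-e(n^2y)=2\pi i n^2\int_y^x e(n^2t)\,dt$. Then estimate $\sum_{n\le M}n^{1-\delta}e(n^2t)$ by partial summation and Weyl's inequality $|\sum_{n\le L}e(n^2t)|\ll L/\sqrt q+\sqrt{L\log q}+\sqrt{q\log q}$, valid for $|t-a/q|\le q^{-2}$. This gives a head bound $\ll N^{-1}\bigl(M^{2-\delta}q^{-1/2}+M^{3/2-\delta}\sqrt{\log q}+M^{1-\delta}\sqrt{q\log q}\bigr)$. With $M=\sqrt N$, and the same inequality applied to the tail, both are $\ll N^{-\delta/2}q^{-1/2}\sqrt{\log q}$ uniformly on the arc, including $\delta=1$. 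This is exactly the paper's treatment via $\widetilde S_n=\sum g'(k^2x)$, and it is where the paper uses the Lipschitz hypothesis on $g'$.

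\textbf{Your refined Weyl bound is false as stated.} The bound $\sum_{n\le L}e(n^2x)\ll L\,q^{-1/2}(1+L^2|h|)^{-1/2}+\sqrt q\log q$ cannot hold uniformly in $L$. Take $x=a'/q'$ with $q'>Q$ odd lying in the arc of $a/q$. The left side grows like $L/\sqrt{q'}$, while the right side stays bounded as $L\to\infty$. You do not need this bound: the plain Weyl inequality suffices once the head is treated as above.
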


\begin{remark} {\rm For two continuous functions $f_1$ and $f_2$, by the definition of upper box dimension it is easy to see
\[\overline{\dim}_{\mathrm{B}}\bigl(graph(f_1+f_2)\bigr)
\leq\max\{\overline{\dim}_{\mathrm{B}}\bigl(graph(f_1)\bigr),\
\overline{\dim}_{\mathrm{B}}\bigl(graph(f_2)\bigl)\}.\]
So Theorem \ref{thm:upper} is nontrivial only
in the case when the seed function $g$ has infinitely many nonzero Fourier coefficients.}
\end{remark}

\begin{corollary}\label{dimension-formula}
Let \(g:\mathbb{R}\to\mathbb{R}\) be \(1\)-periodic with Fourier expansion \(g(x)=\sum_{k\in \mathbb{Z}}C_ke^{2\pi ikx}\)
and $g'$ is Lipschitz continuous. If there exists a positive square-free integer \(d_0\) such that {\rm(\ref{nonvanishing})}
holds, then for every \(0<\delta\le1\) we have
\[\dim_{\mathrm{B}}\bigl(graph(G_\delta)\bigr)=\frac74-\frac{\delta}{2}.\]
In particular, the above dimension formula holds for any trigonometric polynomial
$g(x)=\sum_{k=-N}^{N}C_ke^{2\pi i k x}$ which satisfies
$\sum_{1\leq m^{2}d_0\leq N}C_{m^2 d_0}\,(m^2 d_0)^{\delta/2}\neq0$
for some positive square-free integer \(d_0\).
\end{corollary}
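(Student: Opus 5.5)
The plan is to combine Theorem \ref{thm:lower} and Theorem \ref{thm:upper}. The upper bound $\overline{\dim}_{\mathrm{B}}\le\frac74-\frac{\delta}{2}$ follows directly from Theorem \ref{thm:upper}, since $g'$ is assumed Lipschitz. For the lower bound I will check that the decay hypothesis of Theorem \ref{thm:lower} holds, namely that $\sum_{k\ge1}|C_k|k^{\frac{\varepsilon+\delta}{2}}<\infty$ for some $\varepsilon>0$. Theorem \ref{thm:lower} then gives $\underline{\dim}_{\mathrm{B}}\ge\frac74-\frac{\delta}{2}$. Since $\underline{\dim}_{\mathrm{B}}\le\overline{\dim}_{\mathrm{B}}$ always, the limit defining $\dim_{\mathrm{B}}$ exists and equals $\frac74-\frac{\delta}{2}$.

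The only real step is deriving the coefficient decay from the regularity of $g$.
\begin{itemize}
\item[(i)] Since $g'$ is Lipschitz, $g'$ is absolutely continuous and $g''$ exists a.e. and lies in $L^\infty([0,1])\subset L^2([0,1])$.
\item[(ii)] Integrating by parts twice, which is legitimate by periodicity and absolute continuity of $g$ and $g'$, gives $\widehat{g''}(k)=-4\pi^2k^2C_k$.
\item[(iii)] Parseval then gives $\sum_k k^4|C_k|^2<\infty$.
\item[(iv)] By Cauchy--Schwarz,
\[\sum_{k\ge1}|C_k|k^{\frac{\varepsilon+\delta}{2}}\le\Bigl(\sum_{k\ge1}k^4|C_k|^2\Bigr)^{1/2}\Bigl(\sum_{k\ge1}k^{\varepsilon+\delta-4}\Bigr)^{1/2}.\]
The second factor is finite whenever $\varepsilon+\delta<3$, for instance $\varepsilon=1$ with $\delta\le1$.
\end{itemize}
This verifies the hypothesis of Theorem \ref{thm:lower}. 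It also shows that the series in (\ref{nonvanishing}) converges absolutely, so the non-vanishing condition is meaningful.

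For the trigonometric polynomial case, $g(x)=\sum_{|k|\le N}C_ke^{2\pi ikx}$ is smooth, so $g'$ is Lipschitz. The sum in (\ref{nonvanishing}) reduces to the finite sum over $m^2d_0\le N$, because $C_k=0$ for $k>N$. The general statement then applies. The corollary implicitly assumes $g$ is real-valued, so that $G_\delta$ has a graph in $\mathbb{R}^2$. For trigonometric polynomials this means $C_{-k}=\overline{C_k}$, and nothing else is needed. I expect no serious obstacle; the only point needing care is justifying the integration by parts when $g''$ exists only almost everywhere, and absolute continuity of $g'$ handles that.
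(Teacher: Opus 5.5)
Your proposal is correct and is essentially the paper's route: the corollary is obtained by combining the lower bound of Theorem~\ref{thm:lower} with the upper bound of Theorem~\ref{thm:upper}. You also verify the decay hypothesis of Theorem~\ref{thm:lower}, using $g''\in L^\infty\subset L^2$ and Parseval to get $\sum_k k^4|C_k|^2<\infty$; the paper leaves this step implicit, and your argument is the right one, since the bound $\sum_k k^2|C_k|^2<\infty$ from Lemma~\ref{Fourier-coefficient} alone would not give $\sum_k |C_k|k^{(\varepsilon+\delta)/2}<\infty$ at $\delta=1$.
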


\begin{remark}
\rm {Even for smooth finite trigonometric polynomials, the dimension formula of Corollary \ref{dimension-formula}
is nontrivial, as $G_{\delta}$ acquires fractal structure from square-frequency arithmetic. The dimension depends not
only on the regularity of $g$, but on the interplay of its Fourier coefficients with quadratic Gauss sums.
Failure of (\ref{nonvanishing}) may lower the dimension (see Section \ref{sec:critical}),
showing that the $\frac74-\frac{\delta}{2}$ scaling relies on a delicate arithmetic non-cancellation.}
\end{remark}

\subsection*{Structure of the paper}
The remainder of this paper is organised as follows.
We begin in Section \ref{sec:prelim} by collecting some preliminary lemmas from fractal geometry,
analytic number theory and harmonic analysis, which will serve as the foundational tools for our arguments.
In Section \ref{sec:lower}, we establish the lower bound on the lower box dimension of the graph of $G_\delta$
by technical analysis of the mean variation of the differences $\Delta G_\delta(a/p)$ over the
set of quadratic residues. Section \ref{sec:upper} is devoted to establishing the upper bound on the upper box
dimension of the graph of $G_\delta$ under Lipschitz
 condition on $g'$. The critical case is addressed in Section \ref{sec:critical}, where we discuss the potential
 complexity that may arise and provide several illustrative examples. Finally, Section \ref{sec:conclusion} proposes
 some relevant problems for further investigation.

\section{Preliminaries}\label{sec:prelim}
Throughout this article, for two positive quantities $A$ and $B$, $A\lesssim B$ means that there exists an absolute constant $C>0$
such that $A\le CB$, and $|A|\lesssim|B|$ is denoted by $A=O(B)$. Similarly, $A\lesssim_{\varepsilon}B$ signifies there exists
a constant $C_{\varepsilon}>0$ depending only on the parameter $\varepsilon$ such that $A\le C_{\varepsilon}B$. Also, $A\asymp B$
 suggests $C_1\le A\le C_2B$ with two universal constants $C_1,\, C_2>0$.
\par For readers' convenience, some preliminary lemmas of necessary tools are given as follows.
\subsection{Box dimension}
Let \(f:[0,1]\to\mathbb{R}\) be a continuous function. Its $graph$ is defined as the set $$graph(f)=\{(x,f(x)):x\in[0,1]\}.$$
For a given positive integer $N$, define an $N\times N$ grid $\mathcal{M}_N$ on the unit square $[0,1]^2$ as
$$\mathcal{M}_N=\left\{x=\frac{i}{N},y=\frac{j}{N}:\ 0\le i,j\le N\right\}.$$
The number of grid squares of \(\mathcal{M}_N\)
that intersect $graph(f)$ is denoted by $A_N(f)$.
\par Following (2.5) and (2.6) in \cite[Chapter 2]{Falconer2014},
the \emph{upper} and \emph{lower box dimension}
can be equivalently defined respectively as \[
\overline{\dim}_{\mathrm{B}}\bigl(graph(f)\bigr)=\limsup_{N\to\infty}\frac{\log A_N(f)}{\log N},\qquad
\underline{\dim}_{\mathrm{B}}\bigl(graph(f)\bigr)=\liminf_{N\to\infty}\frac{\log A_N(f)}{\log N}.\]
When these two limits coincide, their common value is called \emph{box dimension}, which is denoted by
$\dim_{\mathrm{B}}\bigl(graph(f)\bigr)$.
\par Let $\{[j/N,(j+1)/N]\}_{j=0}^{N-1}$ be a partition of
$[0,1]$. For each $j$, denote the $oscillation$ of $f$ on subinterval as $${\rm osc}_j(f)=\sup_{x, y\in[j/N,(j+1)/N]}|f(x)-f(y)|.$$
From the definition of $A_N(f)$, it is easy to see that \begin{equation} \label{ANf}
A_N(f)\leq \sum_{j=0}^{N-1}\big(1+N\cdot{\rm osc}_j(f)\big),\end{equation}which is a central tool for estimating $A_N(f)$.

\subsection{Farey dissection}
Following \cite[Chapter III ]{HardyWright2008}, given a large integer \(N\), the $Farey\ sequence$ of order \(N^{1/2}\) consists of all reduced fractions \(a/p\) with \(0\le a\le p\le N^{1/2}\). The $Farey\ interval$ corresponding to each such $a/p$ is denoted by
\[J_{a/p}=\left(\frac{a'+a}{p'+p},\,\frac{a+a''}{p+p''}\right],\]
where \(\frac{a'}{p'}<\frac{a}{p}<\frac{a''}{p''}\) are consecutive Farey fractions, form a dissection
of \((0,1]\). We claim for $J_{a/p}$ that
\begin{equation}\label{Farey-inclusion} J_{a/p}\subset\Bigl\{x\in(0,1]:\,\Bigl|x-\frac{a}{p}\Bigr|<\frac1{p N^{1/2}}\Bigr\}.\end{equation}
\par Indeed, by \cite[Theorem28 and 30]{HardyWright2008} we obtain
\[ap'-a'p=1, \qquad a''p-ap''=1\] and \[p+p'>N^{1/2},\qquad p+p''>N^{1/2}.\]
This imply \[\frac{a}{p}-\frac{a'+a}{p'+p}=\frac{a(p'+p)-p(a'+a)}{p(p'+p)}=\frac{1}{p(p'+p)}<\frac{1}{p N^{1/2}}\]
and
\[\frac{a+a''}{p+p''}-\frac{a}{p}=\frac{p(a+a'')-a(p+p'')}{p(p+p'')}=\frac{1}{p(p+p'')}<\frac{1}{p N^{1/2}}.\]
Hence, the inclusion (\ref{Farey-inclusion}) holds.
\par Farey dissection will be adapted to estimate trigonometric sums with quadratic frequencies.

\subsection{Legendre's symbol, Gauss sum and Dirichlet Character}
Following \cite{Apostol1976}, for an odd prime \(p\) and integer \(k\) with \(p\nmid k\), if the congruence
$x^2\equiv k({\rm mod}\ p)$ has a solution, we say that $k$ is a $quadratic\ residue\ {\rm mod}\ p$ and write $kRp$; otherwise,
$k$ is a $quadratic\ nonresidue\ {\rm mod}\ p$ and write $k\overline{R}p$. Legendre's symbol $\left(\frac{k}{p}\right)$ is defined as
\[\left(\frac{k}{p}\right)=\begin{cases}
1,&{\rm if}\ k R p,\\
-1,&{\rm if}\ k \overline{R}p,\\
0,&{\rm if}\ p\mid k.
\end{cases}
\]
For integers $n>0$ and $k$, the sum $$G(k;n)=\sum\limits_{r=1}^n e^{2\pi ikr^2/n}$$
is called a $quadratic\ Gauss\ sum$.
\par Combining Theorems 9.4, 9.5 and 9.8 in \cite[Chapter 9]{Apostol1976},
together with (28)-(30) in \cite[Section 9.10]{Apostol1976}, derives the following lemma,
which will be used to establish Lemma \ref{lem:leg-same} and prove Theorem \ref{thm:lower}.
\begin{lemma}\label{lem:leg-law}{\rm (i)} If \(p\) is an odd prime, then
\[ \left(\frac{-1}{p}\right)=(-1)^{\frac{p-1}{2}},\qquad \left(\frac{2}{p}\right)=(-1)^{\frac{p^{2}-1}{8}}.\]
{\rm (ii)(Quadratic reciprocity law)} If $p$ and $q$ are distinct odd primes, then
\[\left(\frac{q}{p}\right)=\left(\frac{p}{q}\right)(-1)^{\frac{(p-1)(q-1)}{4}}.\]
{\rm (iii)} Given an odd prime \(p\) and integer \(k\) with \(p\nmid k\), we have
\[ G(k;p)=\sum_{r=0}^{p-1}e^{2\pi i k r^{2}/p}= \epsilon_p\sqrt{p}\,\left(\frac{k}{p}\right),
\qquad
\epsilon_p=\begin{cases}
1,&p\equiv1\pmod4,\\[2pt]
\ i,&p\equiv3\pmod4.
\end{cases}
\]
\end{lemma}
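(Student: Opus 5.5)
Since the lemma collects classical facts, the plan is to derive each item from standard arguments, citing \cite[Chapter 9]{Apostol1976} for the routine parts and concentrating on the Gauss sum formula (iii), which is what the paper actually uses later.

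For (i) I will use Euler's criterion $\left(\frac{k}{p}\right)\equiv k^{(p-1)/2}\pmod p$. This follows because $(\mathbb{Z}/p\mathbb{Z})^\times$ is cyclic of order $p-1$: the quadratic residues are exactly the even powers of a generator, and these are exactly the roots of $x^{(p-1)/2}=1$. Taking $k=-1$ gives the first formula, since both sides are $\pm1$ and $p>2$. For $\left(\frac{2}{p}\right)$ I will use Gauss's lemma. Among $2,4,\dots,p-1$, count those exceeding $p/2$; there are $\frac{p-1}{2}-\lfloor p/4\rfloor$ of them. Checking the four residue classes of $p \bmod 8$ shows this count has the parity of $\frac{p^2-1}{8}$.

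For (ii) I will follow the Gauss-sum proof so that it shares machinery with (iii). Set $\tau=\sum_{r \bmod q}\left(\frac{r}{q}\right)e^{2\pi i r/q}$. The first step is $\tau^2=\left(\frac{-1}{q}\right)q$, obtained by expanding the square and using character orthogonality. Working in $\mathbb{Z}[e^{2\pi i/q}]$ modulo $p$, Frobenius gives $\tau^p\equiv\sum_r\left(\frac{r}{q}\right)e^{2\pi i pr/q}=\left(\frac{p}{q}\right)\tau$. On the other hand, Euler's criterion in $\mathbb{Z}[e^{2\pi i/q}]/(p)$ gives $\tau^{p-1}=(\tau^2)^{(p-1)/2}\equiv\left(\frac{(-1)^{(q-1)/2}q}{p}\right)$. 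Since $\tau^2=\pm q$ is invertible mod $p$, comparing the two expressions and applying (i) to $-1$ yields the reciprocity law. Alternatively, I could simply cite Eisenstein's lattice-point proof in \cite{Apostol1976}.

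For (iii) the first reduction is elementary. The number of $r \bmod p$ with $r^2\equiv m$ equals $1+\left(\frac{m}{p}\right)$, so
\[G(k;p)=\sum_{m=0}^{p-1}\Bigl(1+\Bigl(\frac{m}{p}\Bigr)\Bigr)e^{2\pi ikm/p}=\sum_{m}\Bigl(\frac{m}{p}\Bigr)e^{2\pi ikm/p}=\Bigl(\frac{k}{p}\Bigr)G(1;p).\]
The middle step uses that the full exponential sum vanishes when $p\nmid k$. The last step substitutes $m\mapsto k^{-1}m$ and uses multiplicativity. The computation of $\tau^2$ above then gives $G(1;p)^2=\left(\frac{-1}{p}\right)p$, so $G(1;p)=\pm\sqrt p$ if $p\equiv1\pmod 4$ and $G(1;p)=\pm i\sqrt p$ if $p\equiv3\pmod 4$.

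The main obstacle is fixing the sign, i.e.\ showing the sign is always $+$, which is Gauss's famous theorem. I would prove it by the Poisson summation / theta-function route used in \cite[Section 9.10]{Apostol1976}: it establishes the reciprocity formula
\[\sum_{r=0}^{n-1}e^{2\pi i r^2/n}=\frac{1+i^{-n}}{1+i^{-1}}\sqrt n\]
for every positive integer $n$, via contour integration of $e^{\pi i z^2/n}$-type kernels. Specializing to $n=p$ odd, the factor $\frac{1+i^{-p}}{1+i^{-1}}$ equals $1$ for $p\equiv1\pmod4$ and $i$ for $p\equiv3\pmod4$, which is $\epsilon_p$.
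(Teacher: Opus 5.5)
Your proposal is correct and follows essentially the same route as the paper, which proves nothing itself but cites Apostol's Theorems 9.4, 9.5, 9.8 and the Gauss-sum evaluation (28)--(30) of Section 9.10; in particular, you fix the sign of $G(1;p)$ by the same contour-integration formula $\sum_{r=0}^{n-1}e^{2\pi i r^2/n}=\frac{1+i^{-n}}{1+i^{-1}}\sqrt n$. The only real divergence is in (ii), where you use the Gauss-sum/Frobenius proof instead of the Eisenstein lattice-point proof behind Apostol's Theorem 9.8; this is a legitimate choice that reuses $\tau^2=\left(\frac{-1}{q}\right)q$ in (iii), and it needs only the routine remark $p\,\mathbb{Z}[e^{2\pi i/q}]\cap\mathbb{Z}=p\mathbb{Z}$ to turn the final congruence into an equality of signs.
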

\par  Set \(R(p):=\{a\bmod p:\left(\frac{a}{p}\right)=1\}\), which consists of quadratic residues ${\rm mod}\ p$.
For each integer $n$ with $p\nmid n$, one can check that \begin{equation} \label{sum-Rp}
\sum_{a\in R(p)}e^{2\pi i k n^2a/p}=\frac12\Bigl(\epsilon_p\sqrt{p}\,\left(\frac{k}{p}\right)-1\Bigr).
\end{equation}
Actually, we consider the map \(\varphi : R(p) \longrightarrow R(p)\) defined by \(\varphi(a)=n^2a\pmod{p}\) for each $a\in R(p)$.
Note that $p\nmid n$ guarantees $\varphi$ is a permutation of \(R(p)\). This implies
\[\sum_{a\in R(p)}e^{2\pi ik n^2a/p}=\sum_{a\in R(p)}e^{2\pi ik\varphi(a)/p}=\sum_{b\in R(p)}e^{2\pi ikb/p}.
\]

\par Also, for each \(b\in R(p)\) the congruence \(r^2 \equiv b \pmod{p}\) has exactly two solutions in $\{1,2,...,p-1\}$, then
\[G(k;p)=1+2\sum_{b\in R(p)}e^{2\pi ikb/p}.\]
Therefore, by Lemma \ref{lem:leg-law}(iii) we have
\[\sum_{a\in R(p)} e^{2\pi i k n^2a/p}=\sum_{b \in R(p)} e^{2\pi ikb/p}=\frac{1}{2}\big(G(k;p)-1\big)=\frac{1}{2}\left(\varepsilon_p\sqrt{p}\left(\frac{k}{p}\right)-1\right).\]
So (\ref{sum-Rp}) holds.

\par As in \cite[Section 16.3]{Ireland-Rosen1990}, let \(m\) be a fixed positive integer, given a homomorphism
$\chi':\ (\mathbb{Z}/m\mathbb{Z})^\times\rightarrow\mathbb{C}^*$, define
$\chi:\ \mathbb{Z}\rightarrow\mathbb{C}^*$ as follows: for each $n\in\mathbb{Z}$, set $\chi(n)=0$ if $\gcd(n,m)>1$,
and $\chi(n)=\chi'(n+m\mathbb{Z})$ if $\gcd(n,m)=1$,
such functions $\chi$ are called $Dirichlet\ characters\ modulo\ m$.
From the definition it can be checked that
\par(a) $\chi(n+m)=\chi(n)$ for all $n\in\mathbb{Z}$;
\par(b) $\chi(mn)=\chi(m)\chi(n)$ for all integers $m,n$;
\par(c) $\chi(n)\neq 0$ if and only if $\gcd(n,m)=1$.
\par\noindent The next lemma (see e.g. \cite[Proposition 16.3.2]{Ireland-Rosen1990})
reveals the orthogonality of Dirichlet characters,
which is essential for isolating arithmetic progressions.
\begin{lemma} \label{orthogonality}{\rm(orthogonality)} Let $\chi$ and $\psi$ be Dirichlet
characters modulo $m$, and $a,b\in\mathbb{Z}$. Then
\par\noindent{\rm(i)}$\sum\limits_{a=0}^{m-1}\chi(a)\overline{\psi(a)}=\phi(m)\delta(\chi,\psi)$;
\par\noindent{\rm(ii)}$\sum\limits_{\chi}\chi(a)\overline{\chi(b)}=\phi(m)\delta(a,b)$,
\par\noindent where $\phi(m)$ is the number of generator of $(\mathbb{Z}/m\mathbb{Z})^\times$, $\delta(\chi,\chi)=1$ and
$\delta(\chi,\psi)=0$ if $\chi\neq\psi$, $\delta(a,b)=1$ if $a\equiv b ({\rm mod}\ m)$ and
$\delta(a,b)=0$ if $a\not\equiv b ({\rm mod}\ m)$.
\end{lemma}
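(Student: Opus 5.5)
The plan is to treat both identities as orthogonality relations for characters of the finite abelian group $U=(\mathbb{Z}/m\mathbb{Z})^\times$, which has order $\phi(m)$. First I note that every value $\chi(n)$ with $\gcd(n,m)=1$ is a root of unity: $\chi'$ is a homomorphism from a group of order $\phi(m)$, so $\chi(n)^{\phi(m)}=1$. Consequently $\overline{\chi(n)}=\chi(n)^{-1}$. By property (c), terms with $\gcd(a,m)>1$ vanish, so every sum over $a$ reduces to a sum over $U$.

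For (i), the product $\lambda=\chi\overline{\psi}$ restricted to $U$ is again a homomorphism $U\to\mathbb{C}^*$. If $\chi=\psi$, then $\lambda\equiv 1$ on $U$ and the sum equals $|U|=\phi(m)$. If $\chi\neq\psi$, pick $c\in U$ with $\lambda(c)\neq 1$. Since $a\mapsto ca$ permutes $U$, the multiplicativity (b) gives $S=\sum_{a\in U}\lambda(a)=\sum_{a\in U}\lambda(ca)=\lambda(c)S$. Hence $S=0$.

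For (ii), I would dualize the argument by summing over the set $\widehat U$ of characters, which is a group under pointwise multiplication. Two facts are needed:
\begin{itemize}
\item[(1)] $|\widehat U|=\phi(m)$;
\item[(2)] for every $c\in U$ with $c\neq 1$ there is a character $\psi$ with $\psi(c)\neq1$.
\end{itemize}
Granting these, for $a,b$ coprime to $m$ put $c=ab^{-1}$, so that $\chi(a)\overline{\chi(b)}=\chi(c)$. If $a\equiv b$, then $c=1$ and the sum is $|\widehat U|=\phi(m)$. Otherwise the translation $\chi\mapsto\psi\chi$ permutes $\widehat U$, and $T=\sum_\chi\chi(c)=\psi(c)T$ forces $T=0$.

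The main obstacle is establishing (1) and (2), that is, showing there are enough characters. My plan is to invoke the structure theorem $U\cong\prod_{j=1}^r\mathbb{Z}/n_j\mathbb{Z}$ with generators $g_1,\dots,g_r$. A character is then determined exactly by choosing an $n_j$-th root of unity $\zeta_j$ as the image of each $g_j$, which gives $\prod_j n_j=\phi(m)$ characters and proves (1). For (2), write $c=\prod_j g_j^{e_j}$ with some $e_{j_0}\not\equiv0 \pmod{n_{j_0}}$. Sending $g_{j_0}$ to $e^{2\pi i/n_{j_0}}$ and all other generators to $1$ gives $\psi(c)=e^{2\pi i e_{j_0}/n_{j_0}}\neq 1$.

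Finally, a caveat on (ii) as stated. It must be read with $a,b$ coprime to $m$. If, say, $\gcd(a,m)>1$, then the left side is $0$ by (c) even when $a\equiv b$, so $\delta(a,b)$ should be understood as $0$ in that case. I would add this hypothesis explicitly.
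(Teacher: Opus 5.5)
Your proof is correct and is the standard argument (orthogonality for characters of the finite abelian group $(\mathbb{Z}/m\mathbb{Z})^\times$, via the translation trick and a cyclic decomposition to count and separate characters); the paper gives no proof of its own and simply cites \cite{Ireland-Rosen1990}, where this reasoning is carried out. Your caveat is also right: as literally stated, (ii) fails when $a\equiv b \pmod m$ with $\gcd(a,m)>1$, so one must assume $\gcd(a,m)=1$ or set $\delta(a,b)=0$ in that case. Moreover, your proof of (i) works for any finite abelian group, which is what the paper actually needs when it applies orthogonality to the restrictions $\chi_d|_{G_1}$ in Lemma~\ref{lem:leg-same}.
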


\subsection{Linear congruence, distribution of primes and natural density}
The following two lemmas are crucial for proving Lemma \ref{lem:leg-same}. The first one is Chinese remainder theorem
(see e.g.\,\cite[Theorem 5.26]{Apostol1976}), which allows us to solve some systems of two or more linear congruences.
\begin{lemma}\label{lem:Chinese} {\rm (Chinese remainder theorem)} Assume $m_1,...,m_r$ are positive integers, relatively prime in pairs:
$(m_i, m_k)=1$ if $i\neq k$. Let $b_1,...,b_r$ be arbitrary integers. Then the system of congruences $x\equiv b_i({\rm mod}\ m_i)\ (i=1,...,r)$
has exactly one solution modulo the product $m_1\cdot\cdot\cdot m_r$.
\end{lemma}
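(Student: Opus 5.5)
We prove the Chinese remainder theorem by establishing uniqueness and existence separately, using only B\'ezout's identity and the fact that if pairwise coprime integers each divide an integer, so does their product. Throughout, set $M=m_1\cdots m_r$ and $M_i=M/m_i$ for $i=1,\dots,r$.

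\emph{Uniqueness.} Suppose $x$ and $y$ both satisfy $x\equiv b_i\pmod{m_i}$ and $y\equiv b_i\pmod{m_i}$ for all $i$. Then $m_i\mid (x-y)$ for every $i$. We show by induction on $r$ that $m_1\cdots m_r\mid (x-y)$. The inductive step is as follows. If $m_1\cdots m_{r-1}\mid n$ and $m_r\mid n$, then $\gcd(m_1\cdots m_{r-1},m_r)=1$, since a prime dividing both would divide some $m_i$ with $i<r$ as well as $m_r$, contradicting pairwise coprimality. Hence the product divides $n$; this follows from Euclid's lemma, or from writing $1=um_r+v\,m_1\cdots m_{r-1}$ and multiplying by $n$. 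Therefore $x\equiv y\pmod M$.

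\emph{Existence.} For each $i$ we have $\gcd(M_i,m_i)=1$ by the same prime-divisor argument. By B\'ezout there is an integer $M_i'$ with $M_iM_i'\equiv 1\pmod{m_i}$. Put
\[
x=\sum_{i=1}^{r} b_i M_i M_i'.
\]
For a fixed $k$, every term with $i\neq k$ is divisible by $m_k$, because $m_k\mid M_i$. The term with $i=k$ is congruent to $b_k\pmod{m_k}$. Hence $x\equiv b_k\pmod{m_k}$ for all $k$.

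Together, existence and uniqueness show that the solution set is exactly one residue class modulo $m_1\cdots m_r$. There is no genuine obstacle here. The only point needing care is the coprimality $\gcd(M_i,m_i)=1$, together with its analogue in the uniqueness step, both deduced from pairwise coprimality via prime factors.
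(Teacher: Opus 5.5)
Your proof is correct. The paper gives no proof of its own and simply cites Apostol's Theorem 5.26, whose proof is exactly your argument: construct $x=\sum_i b_iM_iM_i'$ with $M_iM_i'\equiv 1\pmod{m_i}$ for existence, and for uniqueness use that pairwise coprime divisors $m_i$ of $x-y$ force $m_1\cdots m_r\mid x-y$.
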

The second one is Dirichlet's theorem collected from \cite[Chapter 4, Theorem 1.4 and 1.6]{Hindry2011},
which is fundamental for understanding the distribution of primes.
\begin{lemma}\label{lem:Dirichlet}{\rm (Dirichlet's theorem on arithmetic progressions)} Let $a,b\geq1$
be two relatively prime integers. Then there exist infinitely many primes $p$ satisfying $p\equiv b({\rm mod}\ a)$.
Moreover, as $x\rightarrow\infty$, we have the following asymptotic behavior
\begin{equation}\label{prime-theorem} \pi(x;a,b):={\rm card}\{p\ prime:\ p\leq x,\ p\equiv b({\rm mod}\ a)\}
\sim\frac{x}{\varphi(a)\log x},
\end{equation}
where $\varphi(b)$ is the number of generator of $(\mathbb{Z}/b\mathbb{Z})^\times$
{\rm(}called Euler totient of $b${\rm)}.
\end{lemma}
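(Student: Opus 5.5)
This is the classical theorem of Dirichlet together with the prime number theorem for arithmetic progressions. I would prove it with Dirichlet $L$-functions, using the orthogonality relations of Lemma \ref{orthogonality} to isolate the progression. Write $\psi(x;a,b)=\sum_{n\le x,\ n\equiv b\ (\mathrm{mod}\ a)}\Lambda(n)$, where $\Lambda$ is the von Mangoldt function. By Lemma \ref{orthogonality}(ii), with $\gcd(a,b)=1$,
\[
\psi(x;a,b)=\frac{1}{\varphi(a)}\sum_{\chi \bmod a}\overline{\chi(b)}\,\psi(x,\chi),\qquad \psi(x,\chi)=\sum_{n\le x}\chi(n)\Lambda(n).
\]
So it suffices to prove two things. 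First, $\psi(x,\chi_0)\sim x$ for the principal character $\chi_0$. Second, $\psi(x,\chi)=o(x)$ for every nonprincipal $\chi$. Granting these, $\psi(x;a,b)\sim x/\varphi(a)$. Partial summation then converts this to $\pi(x;a,b)\sim x/(\varphi(a)\log x)$, since prime powers $p^k$ with $k\ge2$ contribute only $O(\sqrt{x}\log x)$. Infinitude of primes $p\equiv b\pmod a$ follows immediately.

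The analytic input comes from the $L$-functions $L(s,\chi)=\sum_{n\ge1}\chi(n)n^{-s}$. By property (b) of characters these have Euler products $\prod_p(1-\chi(p)p^{-s})^{-1}$ for $\mathrm{Re}\,s>1$, so
\[
-\frac{L'}{L}(s,\chi)=\sum_{n\ge1}\chi(n)\Lambda(n)n^{-s}.
\]
For $\chi_0$, $L(s,\chi_0)=\zeta(s)\prod_{p\mid a}(1-p^{-s})$, which has a simple pole at $s=1$. For $\chi\ne\chi_0$, Lemma \ref{orthogonality}(i) gives bounded partial sums of $\chi$. Hence $L(s,\chi)$ continues analytically to $\mathrm{Re}\,s>0$, and in particular is holomorphic on $\mathrm{Re}\,s\ge1$.

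The key steps are non-vanishing results. The first is $L(1+it,\chi)\neq0$ for all real $t$ and all $\chi$, excluding only the pole of $\chi_0$ at $t=0$. For $t\ne0$, or for complex $\chi$ at $t=0$, I would use the $3$--$4$--$1$ trick. Since $3+4\cos\theta+\cos2\theta\ge0$, we have
\[
\bigl|L(\sigma,\chi_0)^3L(\sigma+it,\chi)^4L(\sigma+2it,\chi^2)\bigr|\ge1\qquad(\sigma>1).
\]
This is incompatible with a zero at $1+it$ as $\sigma\to1^+$.

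The main obstacle is the remaining case $L(1,\chi)\ne0$ for a real nonprincipal $\chi$. Here I would consider $F(s)=\zeta(s)L(s,\chi)=\sum_n r(n)n^{-s}$, where $r(n)=\sum_{d\mid n}\chi(d)\ge0$ and $r(m^2)\ge1$. If $L(1,\chi)=0$, then $F$ is holomorphic on $\mathrm{Re}\,s>0$. Landau's lemma on Dirichlet series with nonnegative coefficients then forces the series to converge for all real $s>0$. This contradicts $F(1/2)\ge\sum_m r(m^2)m^{-1}=\infty$.

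With all $L(1+it,\chi)\neq0$, the function $-L'/L(s,\chi)$ is holomorphic on $\mathrm{Re}\,s\ge1$, except for a simple pole of residue $1$ at $s=1$ when $\chi=\chi_0$. Write $\chi=\chi_1+i\chi_2$ and apply the Wiener--Ikehara Tauberian theorem (or Newman's contour-integral argument) separately to the nonnegative sequences $\chi_1^{\pm}\Lambda$ and $\chi_2^{\pm}\Lambda$. For $\chi_0$ this gives $\psi(x,\chi_0)\sim x$. For $\chi\ne\chi_0$ it gives $\psi(x,\chi)=o(x)$, which completes the proof.
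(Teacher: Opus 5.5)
The paper gives no proof of this lemma; it is quoted from \cite[Ch.~4, Thms.~1.4, 1.6]{Hindry2011}, so your outline has to stand on its own. It is the standard analytic proof, and it is sound through the non-vanishing step. That covers the orthogonality reduction, the Euler product, the continuation of $L(s,\chi)$ to $\mathrm{Re}\,s>0$, and the $3$--$4$--$1$ argument. Your case split there correctly isolates real $\chi$ at $t=0$, the only case in which $L(\sigma,\chi^2)=L(\sigma,\chi_0)$ has a pole. It also covers the Landau argument. That argument tacitly uses that $\zeta$ continues meromorphically to $\mathrm{Re}\,s>0$ with only the simple pole at $s=1$; you should say so, e.g.\ via $\zeta(s)=\frac{s}{s-1}-s\int_1^\infty\{u\}u^{-s-1}\,du$.

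The gap is the final Tauberian step for $\chi\neq\chi_0$. To apply Wiener--Ikehara to the nonnegative sequence $\chi_1^{+}\Lambda$, you must know that its \emph{own} Dirichlet series $\sum_n\chi_1^{+}(n)\Lambda(n)n^{-s}$, minus $c_1^{+}/(s-1)$, extends continuously to $\mathrm{Re}\,s\ge1$. To conclude $o(x)$ you also need $c_1^{+}=c_1^{-}$, and likewise for $\chi_2^{\pm}$. What you have proved concerns only the signed combination $\sum_n(\chi_1^{+}-\chi_1^{-}+i\chi_2^{+}-i\chi_2^{-})(n)\Lambda(n)n^{-s}=-\frac{L'}{L}(s,\chi)$. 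Taking positive parts is not linear, so holomorphy of this combination says nothing about the four pieces separately. Nor can the Tauberian theorem be applied to the signed sum directly, since its proof (Wiener's or Newman's) uses monotonicity of the summatory function.

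The repair uses tools you already have. Most directly, apply Wiener--Ikehara once to the nonnegative sequence $\mathbf{1}_{n\equiv b\,(\mathrm{mod}\,a)}\Lambda(n)$. Its Dirichlet series is $\frac{1}{\varphi(a)}\sum_{\chi}\overline{\chi(b)}\bigl(-\frac{L'}{L}(s,\chi)\bigr)$. By your non-vanishing results this is continuous on $\mathrm{Re}\,s\ge1$ apart from a simple pole at $s=1$ of residue $1/\varphi(a)$, which gives $\psi(x;a,b)\sim x/\varphi(a)$ at once.

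Alternatively, keep your character-by-character scheme but use the nonnegative sequences $(\chi_0+\chi_1)\Lambda$ and $(\chi_0+\chi_2)\Lambda$. Their series are explicit linear combinations of $-\frac{L'}{L}(s,\chi_0)$, $-\frac{L'}{L}(s,\chi)$ and $-\frac{L'}{L}(s,\overline{\chi})$, with residue $1$ at $s=1$. Comparing with $\psi(x,\chi_0)\sim x$ then yields $\psi(x,\chi)=o(x)$. Your positive parts $\chi_j^{\pm}$ can also be handled, but only after expanding them in characters modulo $a$ via Lemma~\ref{orthogonality}.
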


\par Similar to \cite[Remark 4.16]{Hindry2011}, let $\mathcal{B}$ be a set of certain primes
and $\mathcal{A} \subseteq \mathcal{B}$, define the \emph{natural density} of $\mathcal{A}$ in $\mathcal{B}$ as
\[ d_{\mathcal{B}}(\mathcal{A}):=\lim_{x\rightarrow\infty}\frac{{\rm card}\,\{p\in\mathcal{A}:\ \ p\le x\}}
{{\rm card}\,\{p\in\mathcal{B}:\ \ p\le x\}}.\]

\begin{lemma} \label{reciprocal-sum}Let $a,b\in\mathbb{N}^+$ with $\gcd(a,b)=1$ and
$\mathcal{B}=\{p\ {\rm prime}:\ p\equiv b({\rm mod}\,a)\}$. If $\mathcal{A}\subseteq\mathcal{B}$
with $d_{\mathcal{B}}(\mathcal{A})=\rho>0$, then the series $\sum_{p\in\mathcal{A}}\frac{1}{p}$ diverges. \end{lemma}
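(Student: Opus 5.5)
We argue by contradiction with the asymptotic (\ref{prime-theorem}) from Lemma \ref{lem:Dirichlet} and partial summation. Write $\pi_{\mathcal{B}}(x)={\rm card}\{p\in\mathcal{B}:p\le x\}$ and $\pi_{\mathcal{A}}(x)={\rm card}\{p\in\mathcal{A}:p\le x\}$. Since $d_{\mathcal{B}}(\mathcal{A})=\rho>0$ and, by Lemma \ref{lem:Dirichlet}, $\pi_{\mathcal{B}}(x)\sim x/(\varphi(a)\log x)$, there is $x_0$ such that
\[\pi_{\mathcal{A}}(x)\ge \frac{\rho}{2}\,\pi_{\mathcal{B}}(x)\ge \frac{\rho}{4\varphi(a)}\cdot\frac{x}{\log x}\qquad (x\ge x_0).\]
So $\mathcal{A}$ inherits a prime-number-theorem-type lower bound. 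This is the only input needed, and everything else is elementary.

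Next we convert this counting bound into a lower bound for $\sum_{p\in\mathcal{A},\,p\le X}1/p$. The cleanest route is a dyadic decomposition. For $j$ large, consider the block $(2^{j},2^{j+1}]$. The number of primes of $\mathcal{A}$ in it is $\pi_{\mathcal{A}}(2^{j+1})-\pi_{\mathcal{A}}(2^j)$. The subtlety is that a lower bound on $\pi_{\mathcal{A}}$ alone does not control differences, since we only know a liminf-type density. To avoid this, we instead use Abel summation:
\[\sum_{\substack{p\in\mathcal{A}\\ p\le X}}\frac1p=\frac{\pi_{\mathcal{A}}(X)}{X}+\int_{2}^{X}\frac{\pi_{\mathcal{A}}(t)}{t^{2}}\,dt\ \ge\ \frac{\rho}{4\varphi(a)}\int_{x_0}^{X}\frac{dt}{t\log t}=\frac{\rho}{4\varphi(a)}\bigl(\log\log X-\log\log x_0\bigr).\]
This tends to $\infty$ as $X\to\infty$. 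Abel summation is legitimate here because $\pi_{\mathcal{A}}$ is a nondecreasing step function and $1/t$ is $C^1$. The lower bound uses only the nonnegativity of the boundary term and of the integrand on $[2,x_0]$.

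Hence $\sum_{p\in\mathcal{A}}1/p$ diverges. The only step requiring care is the Abel summation identity, specifically ensuring that we use $\pi_{\mathcal{A}}(t)$ pointwise for all large $t$ and not merely along a subsequence. This is guaranteed because the density is defined as a genuine limit, so the bound $\pi_{\mathcal{A}}\ge\frac{\rho}{2}\pi_{\mathcal{B}}$ holds for every sufficiently large $t$. Note also that only a positive lower density would actually be needed.
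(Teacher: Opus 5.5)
Your proof is correct and follows essentially the paper's route: Abel summation for $\sum_{p\le X,\,p\in\mathcal{A}}1/p$, the bound $\pi_{\mathcal{A}}(t)\ge\frac{\rho}{2}\pi_{\mathcal{B}}(t)$ for all large $t$, and Dirichlet's asymptotic $\pi_{\mathcal{B}}(t)\sim t/(\varphi(a)\log t)$, which makes $\int \pi_{\mathcal{A}}(t)t^{-2}\,dt$ diverge like $\log\log X$. The opening ``by contradiction'' framing and the abandoned dyadic idea are not needed, since your argument is direct.
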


\begin{proof} Denote
\[\pi_{\mathcal{A}}(x)={\rm card}\,\{p\le x:\ p\in\mathcal{A}\}, \qquad\pi_{\mathcal{B}}(x)={\rm card}\,\{p\le x:\ p\in\mathcal{B}\}.\]
 Let $a_n=1$ if $n\in\mathcal{A}$, otherwise $a_n=0$.
Applying Abel's formula (see e.g.\,\cite[Chapter 4, Lemma 1.9]{Hindry2011}
to the sequence $a_n$ and the function $f(t)=1/t$ attains
\begin{equation}\label{Abel-A}\sum_{\substack{p\le x \\ p\in\mathcal{A}}} \frac{1}{p}
=\frac{\pi_{\mathcal{A}}(x)}{x}+\int_{1}^{x}\frac{\pi_{\mathcal{A}}(t)}{t^2}\,dt.\end{equation}

\par Since $d_{\mathcal{B}}(\mathcal{A})=\rho>0$ implies there exists a constant $X_0>1$
such that $ \pi_{\mathcal{A}}(x)\ge\frac{\rho}{2}\pi_{\mathcal{B}}(x)$ for all $x\ge X_0$.
Combining this with (\ref{Abel-A})produces
\begin{equation}\label{A>}\sum_{\substack{p\le x \\ p\in\mathcal{A}}} \frac{1}{p}
\ge\frac{\frac{\rho}{2}\pi_{\mathcal{B}}(x)}{x}+\int_{X_0}^{x}\frac{{\frac{\rho}{2}\pi_\mathcal{B}}(t)}{t^2}\,dt
+\int_{1}^{X_0}\frac{\pi_{\mathcal{A}}(t)}{t^2}\,dt.\end{equation}
Also, as $t\rightarrow\infty$ Lemma \ref{lem:Dirichlet} suggests \[\frac{\pi_{\mathcal{B}}(t)}{t^2}\sim\frac{1}{\varphi(a)t\log t}.\]
Hence, letting $x\rightarrow\infty$ for both sides of (\ref{A>}) yields $\sum_{p\in\mathcal{A}} 1/p$ diverges.
\end{proof}

\subsection{Oscillatory integral}
The lemma below from \cite[Proposition 3, Chapter VIII ]{Stein1993a} provides a rigorous asymptotic expansion for oscillatory integrals near a degenerate stationary point,
which is essential for analyzing the decay and singularities of Fourier transforms.
\begin{lemma}\label{Stein}
Let $k\geq 2$ and suppose $\phi(x_0)=\cdots=\phi^{(k-1)}(x_0)=0$ while $\phi^{(k)}(x_0)\neq 0$.
If the support of $\psi$ is contained in a sufficiently small neighborhood of $x_0$,
then as $\lambda\to\infty$ we have
\[I(\lambda):=\int e^{i\lambda\phi(x)}\psi(x)\,dx \;\sim\;
  \lambda^{-1/k}\sum_{j=0}^{\infty}a_j\lambda^{-j/k},\]
where the coefficients $a_j$ are uniquely determined by $\phi$ and $\psi$.
More precisely, for all non-negative integers $N$ and $r$ we have
\[\left(\frac{d}{d\lambda}\right)^{\!r}\!\left[I(\lambda)-\lambda^{-1/k}\sum_{j=0}^{N}a_j\lambda^{-j/k}\right]
=O\!\left(\lambda^{-r-(N+1)/k}\right)\]
as $\lambda\to\infty$. In particular, it follows for $r=0$, $k=2$ and $N=1$ that
\[I(\lambda)=\left(\frac{2\pi}{-i\phi''(x_0)}\right)^{\!1/2}\psi(x_0)\,\lambda^{-1/2}+O(\lambda^{-1}).\] \end{lemma}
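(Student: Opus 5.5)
The plan is the classical stationary-phase argument: first normalize the phase to a pure power, then expand the amplitude in a Taylor series and evaluate each monomial exactly. This is the route of \cite[Chapter VIII]{Stein1993a}.

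\textbf{Normalizing the phase.} Since $\phi(x_0)=\cdots=\phi^{(k-1)}(x_0)=0$ and $\phi^{(k)}(x_0)\neq0$, Taylor's formula with integral remainder gives
\[\phi(x)=(x-x_0)^k h(x),\qquad h(x_0)=\phi^{(k)}(x_0)/k!\neq0,\]
with $h$ smooth. On a small neighbourhood of $x_0$, set
\[y=(x-x_0)\,|h(x)|^{1/k}.\]
This is a local diffeomorphism, since $dy/dx=|h(x_0)|^{1/k}\neq0$ at $x_0$. In the new variable $\phi=\pm y^k$, with the sign of $\phi^{(k)}(x_0)$. Requiring $\operatorname{supp}\psi$ to lie in this neighbourhood gives
\[I(\lambda)=\int e^{\pm i\lambda y^k}\tilde\psi(y)\,dy,\]
where $\tilde\psi=(\psi\circ x)\,x'(y)$ is smooth with compact support and $\tilde\psi(0)=\psi(x_0)/|h(x_0)|^{1/k}$.

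\textbf{Expanding the amplitude.} Fix a cutoff $\eta\in C_c^\infty$ equal to $1$ near $0$ with $\eta\tilde\psi=\tilde\psi$. Write
\[\tilde\psi(y)=\eta(y)\sum_{j\le M}c_jy^j+\eta(y)\,y^{M+1}R_M(y).\]
For each monomial, compute $\int e^{\pm i\lambda y^k}y^j\eta(y)\,dy$ by splitting $\eta=1+(\eta-1)$.
\begin{itemize}
\item The piece with $1$ is an improper but convergent integral. Rotating the contour (a Fresnel/Gamma integral) and scaling $y=\lambda^{-1/k}t$ give exactly $\lambda^{-(j+1)/k}\,c_{j,k}$, with $c_{j,k}$ explicit in terms of $\Gamma\bigl((j+1)/k\bigr)$.
\item The piece with $\eta-1$ vanishes near $0$. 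Repeated integration by parts, using $\bigl(ik\lambda y^{k-1}\bigr)^{-1}\frac{d}{dy}e^{i\lambda y^k}=e^{i\lambda y^k}$, shows it is $O(\lambda^{-L})$ for every $L$.
\end{itemize}

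\textbf{The remainder and derivatives.} The remainder term $\int e^{\pm i\lambda y^k}y^{M+1}R_M\eta\,dy$ must be shown to be $O\bigl(\lambda^{-(M+2)/k}\bigr)$, and this is the main obstacle. I would split at $|y|\sim\lambda^{-1/k}$. On the inner region the trivial bound gives $\lambda^{-(M+2)/k}$. On the outer region I would integrate by parts $L$ times; each step gains a factor $\bigl(\lambda|y|^{k}\bigr)^{-1}$, so the outer contribution is bounded by
\[\int_{|y|>\lambda^{-1/k}}|y|^{M+1-kL}\lambda^{-L}\,dy\lesssim\lambda^{-(M+2)/k}\qquad\text{once } kL>M+2.\]
Choosing $M=N$ gives the $r=0$ statement, with the $a_j$ identified as combinations of $c_j$ and $c_{j,k}$; since the expansion is asymptotic, these coefficients are uniquely determined. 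For $r\ge1$, note that $\frac{d}{d\lambda}$ applied under the integral multiplies the integrand by $\pm iy^k$. This is again an integral of the same type with amplitude $y^k\tilde\psi$, whose expansion starts $k$ orders higher. Applying the $r=0$ result to it gives the extra factor $\lambda^{-r}$.

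\textbf{The case $k=2$, $N=1$, $r=0$.} The leading coefficient comes from $\int e^{\pm i\lambda y^2}dy=(\pi/\lambda)^{1/2}e^{\pm i\pi/4}$. Substituting $\tilde\psi(0)=\psi(x_0)\bigl(|\phi''(x_0)|/2\bigr)^{-1/2}$ gives
\[a_0=\left(\frac{2\pi}{-i\phi''(x_0)}\right)^{1/2}\psi(x_0).\]
The linear term $c_1y$ contributes nothing to order $\lambda^{-1}$, because the improper integral of an odd function vanishes and only the rapidly decaying $\eta-1$ piece survives. So the remainder is $O(\lambda^{-1})$, which is the stated formula.
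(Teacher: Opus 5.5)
The paper does not prove this lemma; it simply quotes Stein's Proposition~3. Your outline is the standard route: normalize the phase to $\pm y^k$, Taylor-expand $\tilde\psi$, evaluate the monomials exactly, and control the Taylor remainder by splitting at $|y|\sim\lambda^{-1/k}$. The change of variables, the remainder estimate and the $k=2$ constant are all correct.

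The monomial step, however, fails as written for $j\ge k-1$. In that range $\int_0^\infty e^{\pm i\lambda y^k}y^j\,dy$ is \emph{not} a convergent improper integral. With $u=y^k$ it becomes $\frac1k\int_0^\infty e^{\pm i\lambda u}u^{(j+1)/k-1}\,du$, and the exponent is $\ge 0$. Concretely, for $k=2$, $j=2$, one integration by parts shows that $\int_0^R y^2e^{i\lambda y^2}\,dy$ contains the term $Re^{i\lambda R^2}/(2i\lambda)$, which has no limit as $R\to\infty$. For the same reason $y^j(\eta-1)e^{\pm i\lambda y^k}$ is not integrable at infinity: your first integrations by parts leave boundary terms of size $R^{j+1-k}/\lambda$ that do not vanish, and the arc in the contour rotation does not disappear either. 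Since $M=N$ is arbitrary, the splitting $\eta=1+(\eta-1)$ breaks down exactly for the terms that produce the coefficients $a_j$ with $j\ge k-1$.

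The formulas you would write down are still correct; they only need to be justified by a regularization. Work on each half-line: after $y\mapsto -y$ you face $\int_0^\infty e^{\sigma i\lambda t^k}t^j\eta(\pm t)\,dt$ with $\sigma=\pm1$. Insert the damping factor $e^{-\epsilon t^k}$, which does not affect this compactly supported integral in the limit $\epsilon\to0$.
\begin{itemize}
\item The model piece becomes the absolutely convergent integral $\frac1k\Gamma\bigl(\frac{j+1}{k}\bigr)(\epsilon-\sigma i\lambda)^{-(j+1)/k}$.
\item The $(\eta-1)$ piece is integrated by parts with $\bigl(k(\sigma i\lambda-\epsilon)t^{k-1}\bigr)^{-1}\frac{d}{dt}$. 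There are now no boundary terms at infinity, and the bound $O(\lambda^{-L})$ holds uniformly in $\epsilon$.
\end{itemize}
Then let $\epsilon\to0$. Alternatively, compare with the damped model $\int_0^\infty e^{(\sigma i\lambda-1)t^k}t^j\,dt$ and Taylor-expand $e^{t^k}\tilde\psi$ instead of $\tilde\psi$.

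For $r\ge1$ there is a smaller gap. You still have to explain why the expansion of $I^{(r)}$, obtained from the amplitude $(\pm iy^k)^r\tilde\psi$, is the termwise derivative of the expansion of $I$. This follows from $\Gamma(s+1)=s\Gamma(s)$, or from uniqueness of asymptotic expansions after integrating the expansion of $I^{(r)}$ over $[\lambda,\infty)$.

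The only case the paper actually uses ($k=2$, error $O(\lambda^{-1})$) is unaffected by the main gap. Take $M=0$, so that only the convergent $j=0$ integral appears.
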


\subsection{M\"obius function }
As in \cite[Chapter 2]{Apostol1976}, the M\"obius function $\mu$ is defined for positive
integers $n$ as follows
\[\mu(n) =
\begin{cases}
1 & \text{if } n = 1,\\
(-1)^k & \text{if } n = p_1 p_2 \cdots p_k \text{ with distinct primes } p_i,\\
0 & \text{otherwise}.
\end{cases}
\]
\par The following M\"obius inversion formula (see e.g.\,\cite[Theorem 2.9]{Apostol1976})
allows one to uniquely recover an arithmetic function from its divisor-sum transform.
\begin{lemma}{\rm (M\"obius inversion formula )} \label{Mobius}
Let $f$ and $g$ be two arithmetic functions. Then
\[g(n)=\sum_{d \mid n} f(d) \quad \Longleftrightarrow \quad
f(n)=\sum_{d \mid n} \mu(d) g\!\left(\frac{n}{d}\right).\]
\end{lemma}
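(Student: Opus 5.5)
The plan is the classical one: reduce both implications to the basic identity
\[
\sum_{d\mid n}\mu(d)=\begin{cases}1,& n=1,\\ 0,& n>1,\end{cases}
\]
and then interchange the order of summation over pairs of divisors. Everything is finite, so no convergence issues arise.

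\emph{Step 1 (the basic identity).} For $n=1$ the sum is $\mu(1)=1$. For $n>1$, write $n=p_1^{\alpha_1}\cdots p_k^{\alpha_k}$ with $k\ge1$. By the definition of $\mu$, only the square-free divisors $d=\prod_{i\in S}p_i$ with $S\subseteq\{1,\dots,k\}$ contribute, and each contributes $(-1)^{|S|}$. Grouping the subsets by their size $j=|S|$ gives
\[
\sum_{d\mid n}\mu(d)=\sum_{j=0}^{k}\binom{k}{j}(-1)^j=(1-1)^k=0 .
\]

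\emph{Step 2 ($\Rightarrow$).} Assume $g(n)=\sum_{e\mid n}f(e)$ for all $n$. Then
\[
\sum_{d\mid n}\mu(d)\,g\!\left(\frac nd\right)=\sum_{d\mid n}\mu(d)\sum_{e\mid n/d}f(e).
\]
The index pairs $(d,e)$ with $d\mid n$ and $e\mid n/d$ are exactly the pairs with $de\mid n$. These are the same as the pairs with $e\mid n$ and $d\mid n/e$, so we may swap the order of summation:
\[
\sum_{d\mid n}\mu(d)\,g\!\left(\frac nd\right)=\sum_{e\mid n}f(e)\sum_{d\mid n/e}\mu(d).
\]
By Step 1 the inner sum vanishes unless $n/e=1$. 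Hence the whole expression equals $f(n)$.

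\emph{Step 3 ($\Leftarrow$).} Assume $f(m)=\sum_{e\mid m}\mu(e)\,g(m/e)$ for all $m$. Then
\[
\sum_{d\mid n}f(d)=\sum_{d\mid n}\sum_{e\mid d}\mu(e)\,g\!\left(\frac de\right).
\]
Substitute $d=em$. The pairs $(d,e)$ with $e\mid d\mid n$ correspond bijectively to the pairs $(e,m)$ with $em\mid n$, which are exactly the pairs with $m\mid n$ and $e\mid n/m$. Therefore
\[
\sum_{d\mid n}f(d)=\sum_{m\mid n}g(m)\sum_{e\mid n/m}\mu(e)=g(n),
\]
again by Step 1.

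The only point requiring care is the bookkeeping in the interchange of summations, namely the bijection between the index sets $\{(d,e):e\mid d\mid n\}$ and $\{(e,m):em\mid n\}$. Steps 1 to 3 are otherwise routine.
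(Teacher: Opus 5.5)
Your proof is correct and complete. The paper gives no proof of its own and cites Apostol's Theorem 2.9, whose proof is the same argument in Dirichlet-convolution form: write $g=f*u$ with $u\equiv 1$, convolve with $\mu$ using $\mu*u=I$ (your Step 1, with $I(n)=1$ for $n=1$ and $0$ otherwise) and associativity of $*$ (your interchange of summations in Steps 2 and 3), and conversely convolve $f=g*\mu$ with $u$.
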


\section{Lower bound for the lower box dimension}\label{sec:lower}
In this section, at the beginning we estimate the error of an infinite series and oscillatory integral, and
then prove some key lemmas, which will be adopted to establish lower bound for the lower box dimension.

\subsection{Auxiliary estimates}
By using the Poisson summation formula and detailed stationary phase analysis of oscillatory integrals,
we first prove the following lemma.
\begin{lemma}\label{lem:error estimate}
 Given a prime $p$, $k\in\mathbb{Z}\setminus\{0\}$ and $0<\delta\le 1$. Define\[S_k(p)=\sum_{n=1}^\infty\frac{1-e^{2\pi ik
n^2/p^2}}{n^{1+\delta}},\qquad M_k=\int_0^\infty\frac{1-e^{2\pi i k u^2}}{u^{1+\delta}}\,du,\]and set $E_k(p)=p^\delta S_k(p)-M_k$.
Then there exists a constant $C_\delta>0$
depending only on $\delta$ such that for any $0<\alpha\leq1$ we have
$$ |E_k(p)|\leq C_\delta\,|k|^{(\alpha+\delta)/2}\,p^{-\alpha}.$$
\end{lemma}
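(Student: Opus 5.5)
The plan is to read $p^\delta S_k(p)$ as a Riemann sum for $M_k$ with step $1/p$, and to interpolate between two endpoint bounds. Put $h_k(u)=(1-e^{2\pi i k u^2})u^{-1-\delta}$. Since $p^\delta n^{-1-\delta}=p^{-1}(n/p)^{-1-\delta}$, we have
\[
p^\delta S_k(p)=\frac1p\sum_{n\ge1}h_k(n/p),\qquad M_k=\int_0^\infty h_k(u)\,du .
\]
So $E_k(p)$ is exactly the error of a Riemann sum. It suffices to prove the two endpoint estimates
\[
\text{(A)}\ \ |E_k(p)|\lesssim_\delta |k|^{\delta/2},\qquad \text{(B)}\ \ |E_k(p)|\lesssim_\delta |k|^{(1+\delta)/2}p^{-1}.
\]
Then $|E|\le A^{1-\alpha}B^\alpha$ gives $|E_k(p)|\lesssim_\delta |k|^{\delta/2}\bigl(|k|^{1/2}/p\bigr)^\alpha=|k|^{(\alpha+\delta)/2}p^{-\alpha}$ for every $0<\alpha\le1$.

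\textbf{Bound (A).} Use $|1-e^{2\pi i k u^2}|\le\min(2,2\pi|k|u^2)$ and split at $X=p|k|^{-1/2}$. For $n\le X$ the sum is at most $|k|p^{\delta-2}\sum_{n\le X}n^{1-\delta}\lesssim |k|p^{\delta-2}X^{2-\delta}=|k|^{\delta/2}$. For $n>X$ it is at most $2p^\delta\sum_{n>X}n^{-1-\delta}\lesssim_\delta p^\delta X^{-\delta}=|k|^{\delta/2}$. If $X<1$, only the tail occurs and it is bounded by $p^\delta\lesssim|k|^{\delta/2}$. The same splitting of the integral, at $u=|k|^{-1/2}$, gives $|M_k|\lesssim_\delta|k|^{\delta/2}$. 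Alternatively, the substitution $u\mapsto u|k|^{-1/2}$ gives $M_k=|k|^{\delta/2}M_{\pm1}$ directly.

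\textbf{Bound (B).} A naive total-variation estimate fails here: $|h_k'(u)|\approx|k|u^{-\delta}$ is not integrable at infinity. We therefore need oscillation, via Poisson summation.
\begin{enumerate}[label=(\roman*)]
\item Write $h_k=u^{-1-\delta}-e^{2\pi iku^2}u^{-1-\delta}$ on $[1/p,\infty)$ and treat the region $u\lesssim |k|^{-1/2}$ separately. There $h_k$ is $O(|k|u^{1-\delta})$ with derivative $O(|k|u^{-\delta})$, which is integrable near $0$. The Riemann-sum error on this range is therefore $\lesssim p^{-1}\int_0^{|k|^{-1/2}}|k|u^{-\delta}\,du\lesssim |k|^{(1+\delta)/2}p^{-1}$, plus a first-cell term of the same size.
\item On $u\gtrsim|k|^{-1/2}$, insert a smooth partition of unity and apply Poisson summation on the lattice $p^{-1}\mathbb{Z}$. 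The error becomes $\sum_{m\ne0}\int h_k(u)\psi(u)e^{-2\pi i mpu}\,du$ plus boundary terms.
\item The non-oscillating part $u^{-1-\delta}\psi$ has Fourier coefficients $O(|k|^{(1+\delta)/2}(mp)^{-2})$ after two integrations by parts, which is summable in $m$.
\item For the part $e^{2\pi i(ku^2-mpu)}u^{-1-\delta}\psi(u)$, the phase has a nondegenerate stationary point at $u_m=mp/(2k)$, present only for $mk>0$. Lemma~\ref{Stein} with $k=2$ gives a contribution $\approx |k|^{-1/2}u_m^{-1-\delta}\asymp |k|^{-1/2}(|k|/(|m|p))^{1+\delta}$. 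Summed over $m$ this is $\lesssim |k|^{1/2+\delta}p^{-1-\delta}$. This is $\le|k|^{(1+\delta)/2}p^{-1}$ when $p\ge|k|^{1/2}$, and when $p<|k|^{1/2}$ bound (B) is weaker than (A) anyway.
\item Away from $u_m$, integrate by parts using $|\phi'(u)|=|2ku-mp|$. This is van der Corput's first-derivative test on dyadic pieces.
\end{enumerate}

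The main obstacle is step (iv)–(v): getting bounds uniform in $m$, $k$ and $p$. Lemma~\ref{Stein} is only asymptotic in a single $\lambda$ with fixed $\psi$. To use it, I would first rescale $u=v\,|m|p/|k|$, so that the stationary point sits at $v=1/2$ with $\lambda=m^2p^2/|k|$ and a fixed cutoff. I would then handle the dyadic shells in $v$ away from $1/2$ by repeated integration by parts, which yields factors $(\lambda\,|v-\tfrac12|)^{-1}$ summable over shells. Tracking the constants then gives bound (B).
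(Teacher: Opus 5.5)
Your plan is sound and shares the paper's analytic core:
\begin{itemize}
\item Poisson summation;
\item the substitution $u=v\,mp/k$, which puts the stationary point of $ku^2-mpu$ at $v=\tfrac12$ with parameter $\lambda=m^2p^2/|k|$ (exactly the paper's $\eta^2$, $\eta=mp/\sqrt{k}$, in its term $I_3$);
\item Lemma~\ref{Stein} with a fixed cutoff, and integration by parts elsewhere;
\item combination with the trivial bound (your $\min(A,B)\le A^{1-\alpha}B^{\alpha}$ is the paper's case split on whether $h=\sqrt{k}/p$ exceeds $1$).
\end{itemize}

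The real difference is near the origin. The paper applies Poisson summation to the whole even function $h_k$ and rescales by $\sqrt{k}$. Everything then reduces to the single function $\Phi(\eta)$, so uniformity in $k$ is automatic. The price is computing the cusp term $C'_\delta\eta^{\delta-2}$ produced by $|u|^{1-\delta}$ at $0$ (its $I_1$). You instead peel off $u\lesssim|k|^{-1/2}$ with an elementary total-variation bound for the Riemann sum. This avoids that asymptotic, but forces you to track $k,m,p$ through a smooth partition of unity by hand.

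That bookkeeping is where your write-up needs repair.

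(1) In (i), for $\delta=1$ the bound $|h_k'|=O(|k|u^{-\delta})$ is not integrable at $0$. Use instead $h_k'(u)=-2\pi i k(1-\delta)u^{-\delta}+O(k^2u^{2-\delta})$ for $|k|u^2\lesssim 1$. Its leading term vanishes at $\delta=1$, so $\int_0^{2|k|^{-1/2}}|h_k'|\lesssim|k|^{(1+\delta)/2}$ uniformly in $\delta$.

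(2) In (iii), two integrations by parts give $|k|^{1+\delta/2}(mp)^{-2}$, not $|k|^{(1+\delta)/2}(mp)^{-2}$, because $\psi''\sim|k|$ on the transition layer. The resulting bound $|k|^{1+\delta/2}p^{-2}$ is admissible only because you have reduced to $p\gtrsim|k|^{1/2}$.

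(3) In (v), on the small-$v$ side an integration by parts gains only $(\lambda v)^{-1}$, not $(\lambda|v-\tfrac12|)^{-1}$. The amplitude $v^{-1-\delta}\psi(v\,mp/k)$ is cut off at $v\sim\lambda^{-1/2}$ and loses a factor $v^{-1}$ per derivative. The first-derivative test therefore leaves $|k|^{(1+\delta)/2}/(|m|p)$, whose sum over $m$ diverges logarithmically. You need a second integration by parts, which the smooth cutoff allows; it gives $|k|^{1+\delta/2}(mp)^{-2}$. The same applies for $mk<0$.

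(4) For the boundedly many $m$ with $\lambda=O(1)$, the amplitude is not fixed on the support of your cutoff near $\tfrac12$. Bound these terms trivially by $\int|h_k\psi|\lesssim|k|^{\delta/2}\lesssim|k|^{(1+\delta)/2}/p$, as the paper does by splitting at $m\le\sqrt{k}/p$.

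These bounds also give the decay of $\widehat{h_k\psi}$ needed to justify Poisson summation.
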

\begin{proof}
\textbf{Step 1.} Firstly, we prove there exists a constant $\widetilde{C}_{\delta}>0$
depending only on $\delta$ such that
\[
  |E_k(p)|\leq \widetilde{C}_{\delta}\,|k|^{(1+\delta)/2}\,p^{-1}.
\]
Since $E_{-k}(p)=\overline{E_k(p)}$, we may assume $k>0$.

\textbf{Case 1: $0<\delta<1$.} For given $k\in\mathbb{Z}\setminus\{0\}$, consider the function
\begin{equation}\label{hku} h_k(u)=
  \begin{cases}
    \dfrac{1-e^{2\pi iku^2}}{|u|^{1+\delta}}, & u\neq 0,\\[4pt]
    0, & u=0.
  \end{cases}\end{equation}
Set $M_k=\int_0^{\infty}h_k(u)\,du$.
Since $h_k$ is even, applying Poisson summation formula (see e.g.\,\cite[Theorem 3.2.8]{Grafakos2014}) gives
\[
  \frac{1}{p}\sum_{n\in\mathbb{Z}}h_k\left(\frac{n}{p}\right)
  =\sum_{m\in\mathbb{Z}}\widehat{h}_k(mp),
  \qquad
  \widehat{h}_k(\xi)=2\int_0^{\infty}h_k(u)\cos(2\pi\xi u)\,du,
\]
so that
\[E_k(p)=\frac{1}{p}\sum_{n=1}^{\infty}h_k\left(\frac{n}{p}\right)-M_k=\sum_{m=1}^{\infty}\widehat{h}_k(mp).
\]
Performing change of variable $v=\sqrt{k}\,u$ and setting $\eta=\frac{\xi}{\sqrt{k}}$ yield
\[\widehat{h}_k(\xi)=2k^{\delta/2}\Phi(\eta),\]
where
\[\Phi(\eta)=\int_0^{\infty}\frac{1-e^{2\pi iv^2}}{v^{1+\delta}}\cos(2\pi\eta v)\,dv.\]
Therefore,
\begin{equation} \label{Ekp} E_k(p)=2k^{\delta/2}\sum_{m=1}^{\infty}\Phi\!\left(\frac{mp}{\sqrt{k}}\right).
\end{equation}
\par Next, we shall show that there exist two constants $A_{\delta}>0$ and $\beta>1$
depending only on $\delta$ such that
\[|\Phi(\eta)| \leq A_{\delta} \eta^{-\beta}, \quad \forall \eta \geq 1.\]
Let $\varphi(v)=(1-e^{2\pi iv^2})v^{-1-\delta}$ for $v>0$.
We may assume $\eta>4$ and decompose $\Phi(\eta)$ as
\begin{eqnarray}\label{Phieta}\Phi(\eta)&=&\int_0^1\varphi(v)\cos(2\pi\eta v)\,dv+
\int_1^{\eta/4}\varphi(v)\cos(2\pi\eta v)\,dv+
\int_{\eta/4}^{\infty}\varphi(v)\cos(2\pi\eta v)\,dv\nonumber \\
&:=&I_1(\eta)+I_2(\eta)+I_3(\eta).\end{eqnarray}

\par \underline{\textit{Estimation of $I_1(\eta)$.}}\quad Integration by parts derives
\[I_1(\eta)=-\frac{1}{2\pi\eta}\int_0^1\varphi'(v)\sin(2\pi\eta v)\,dv.\]
By expanding $e^{2\pi iv^2}$ at the origin, one finds for all $v\in[0,1]$ that
\[\varphi'(v)=-2\pi i(1-\delta)v^{-\delta}+2\pi^2(3-\delta)v^{2-\delta}+R(v),\]
where $|R(v)|\leq Cv^{4-\delta}$ and $|R'(v)|\leq C'v^{3-\delta}$ with two constants $C,\ C'>0$
depending only on $\delta$.
\par For $\alpha>-2$ set $J_\alpha(\eta)=\int_0^1 v^\alpha\sin(2\pi\eta v)\,dv$. Then
\[I_1(\eta)
  =\frac{1}{\eta}\Bigl[i(1-\delta)J_{-\delta}(\eta)-\pi(3-\delta)J_{2-\delta}(\eta)
    -\frac{1}{2\pi}\int_0^1 R(v)\sin(2\pi\eta v)\,dv
  \Bigr].\]
Making the substitution $w=\eta v$ gives
\begin{align*}
J_{-\delta}(\eta)
&=\eta^{\delta-1}\int_{0}^{\eta}w^{-\delta}\sin(2\pi w)\,dw\\
&=\eta^{\delta-1}\Bigl(\int_{0}^{\infty}w^{-\delta}\sin(2\pi w)\,dw
  -\int_{\eta}^{\infty}w^{-\delta}\sin(2\pi w)\,dw\Bigr).
\end{align*}
Integration by parts shows
\[
\Bigl|\int_{\eta}^{\infty}w^{-\delta}\sin(2\pi w)\,dw\Bigr|
\leq\frac{1}{\pi}\eta^{-\delta}.\]
Hence,
\[J_{-\delta}(\eta)=\widetilde{C}_\delta\eta^{\delta-1}+O(\eta^{-1})\]
with constant \[\widetilde{C}_\delta=\int_{0}^{\infty}w^{-\delta}\sin(2\pi w)\,dw
=(2\pi)^{\delta-1}\Gamma(1-\delta)\sin(\pi\delta/2).\]

\par Also, applying integration by parts gets
\[J_{2-\delta}(\eta)=O(\eta^{-1}), \qquad \int_0^1 R(v)\sin(2\pi\eta v)\,dv=O(\eta^{-1}).\]
This implies
\begin{equation}\label{I1eta} I_1(\eta)=C_\delta^{\prime}\,\eta^{\delta-2}+O(\eta^{-2})\end{equation}
with constant
\[C_\delta^{\prime}=i(1-\delta)\widetilde{C}_\delta
=i(1-\delta)(2\pi)^{\delta-1}\Gamma(1-\delta)\sin(\pi\delta/2).\]

\par \underline{\textit{Estimation of $I_2(\eta)$.}}\quad Note that \(
\varphi^{\prime}(v)=-(1+\delta)v^{-2-\delta}(1-e^{2\pi i v^2})-4\pi i v^{-\delta}e^{2\pi i v^2}
\) and the phases $v^2\pm\eta v$ have no stationary points on $[1,\eta/4]$.
Integration by parts on this interval, together with $\sin(2\pi\eta v)=\frac{1}{2i}\big(e^{2\pi i\eta v}-
e^{-2\pi i\eta v}\big)$, yields
\begin{eqnarray} \label{I2eta} I_2(\eta)&=&\frac{\varphi(v)\sin(2\pi\eta v)}{2\pi\eta}\Big|_1^{\eta/4}
-\frac{1}{2\pi\eta}\int_1^{\eta/4}\varphi^{\prime}(v)\sin(2\pi\eta v)\ dv\nonumber \\
&=&\frac{\varphi(\eta/4)\sin(\pi\eta^2/2)}{2\pi\eta}+\frac{1+\delta}{2\pi\eta}
\int_1^{\eta/4}v^{-2-\delta}\sin(2\pi\eta v)\,dv\nonumber \\
&&\ -\frac{1+\delta}{4\pi i\eta}\int_1^{\eta/4}v^{-2-\delta}
\big(e^{2\pi i(v^2+\eta v)}-e^{2\pi i(v^2-2\eta v)}\big)\,dv\nonumber \\
&&\ +\frac{1}{\eta}\int_1^{\eta/4}v^{-\delta}
\big(e^{2\pi i(v^2+\eta v)}-e^{2\pi i(v^2-2\eta v)}\big)\,dv\nonumber \\
&=&O(\eta^{-2-\delta})+\frac{1+\delta}{2\pi\eta}\cdot O(\eta^{-1})
-\frac{1+\delta}{4\pi i\eta}\cdot O(\eta^{-1})+\frac{1}{\eta}\cdot O(\eta^{-1})\nonumber \\
&=&O(\eta^{-2}).\end{eqnarray}

\par\underline{\textit{Estimation of $I_3(\eta)$.}}\quad
Note that $\cos(2\pi\eta v)=\frac{1}{2}\big(e^{2\pi i\eta v}+
e^{-2\pi i\eta v}\big)$ and write $I_3(\eta)$ as
\[I_3(\eta)=\int_{\eta/4}^{\infty}v^{-1-\delta}\cos(2\pi\eta v)\,dv
-\frac{1}{2}\int_{\eta/4}^{\infty}v^{-1-\delta}e^{2\pi i(v^2+\eta v)}\,dv
-\frac{1}{2}\emph{}\int_{\eta/4}^{\infty}v^{-1-\delta}
e^{2\pi i(v^2-\eta v)}\,dv.\]
Again, using integration by parts obtains
\[\int_{\eta/4}^{\infty}v^{-1-\delta}\cos(2\pi\eta v)\,dv=O(\eta^{-2-\delta}),
\quad \int_{\eta/4}^{\infty}v^{-1-\delta}e^{2\pi i(v^2+\eta v)}\,dv=O(\eta^{-2-\delta}).\]
Denote $K(\eta)=\int_{\eta/4}^{\infty}v^{-1-\delta}
e^{2\pi i(v^2-\eta v)}\,dv$ and substitute $v=\eta x$ to get
\[K(\eta)=e^{-\pi i\eta^2/2}\eta^{-\delta}\int_{1/4}^{\infty}
x^{-1-\delta}e^{2\pi i\eta^2(x-\frac12)^2}\,dx.\]

Choose a function $\chi(x)\in C_0^\infty$ such that $\operatorname{supp}\chi\subseteq[\frac{5}{16},\frac{11}{16}]$
and $\chi(x)\equiv1$ for $x\in[\frac{7}{16},\frac{9}{16}]$. Then
\begin{align*}
&\int_{\frac{1}{4}}^{\infty}x^{-1-\delta}e^{2\pi i\eta^2\left(x-\frac12\right)^2}\,dx\\
=\,&\int_{\frac{1}{4}}^{\infty}x^{-1-\delta}\chi(x)e^{2\pi i\eta^2\left(x-\frac12\right)^2}\,dx
+\int_{\frac{1}{4}}^{\infty}x^{-1-\delta}(1-\chi(x))e^{2\pi i\eta^2\left(x-\frac12\right)^2}\,dx\\
:=\,&K_1(\eta)+K_2(\eta).
\end{align*}
\par For $K_1(\eta)$, applying Lemma \ref{Stein} with $\phi(x)=(x-\frac12)^2$ and
$\psi(x)=x^{-1-\delta}\chi(x)$ gives
\[K_1(\eta)=2^{\delta+\frac{1}{2}}\,e^{i\frac{\pi}{4}}\eta^{-1}+O(\eta^{-2}).\]
Since $1-\chi(x)\equiv 0$ for $x\in[\frac{7}{16},\frac{9}{16}]$, we have
\begin{align*}
K_2(\eta)
&=\int_{\frac{1}{4}}^{\frac{7}{16}}x^{-1-\delta}(1-\chi(x))e^{2\pi i\eta^2\left(x-\frac{1}{2}\right)^2}\,dx
+\int_{\frac{9}{16}}^{\infty}x^{-1-\delta}(1-\chi(x))e^{2\pi i\eta^2\left(x-\frac{1}{2}\right)^2}\,dx\\
&:=K_{2,1}(\eta)+K_{2,2}(\eta).
\end{align*}
\par Also, notice that the phase $(x-\frac{1}{2})^2$ has no stationary point on $[\frac{1}{4}, \frac{7}{16}]
\cup[\frac{7}{16},\infty)$,
carrying integration by parts shows $K_{2,1}(\eta)=O(\eta^{-2})$ and $K_{2,2}(\eta)=O(\eta^{-2})$, then
$K_2(\eta)=O(\eta^{-2})$. Collecting the estimates above we obtain
\begin{equation}\label{I3eta} I_3(\eta)=-2^{\delta-\frac{1}{2}}e^{\pi i(\frac{1}{4}-\frac{\eta^2}{2})}\eta^{-1-\delta}
+O(\eta^{-2-\delta}).\end{equation}
\par Consequently,  it follows from (\ref{Phieta})-(\ref{I3eta}) that
\[\Phi(\eta)=C_\delta^{\prime}\,\eta^{\delta-2}-2^{\delta-\frac{1}{2}}e^{\pi i(\frac{1}{4}-\frac{\eta^2}{2})}\eta^{-1-\delta}
+O(\eta^{-2})\]
and thus
\[|\Phi(\eta)|=O(\eta^{-\beta})\quad {\rm with}\quad \beta:=\min(1+\delta,2-\delta)>1.\]
Therefore, there exists a constant $A_{\delta}>0$ depending only on $\delta$ such that
\[|\Phi(\eta)| \leq A_{\delta} \eta^{-\beta}\quad {\rm for}\quad \forall\ \eta \geq 1.\]

\par Also, observe that $|\Phi(\eta)|\leq C_\delta^{''}$ for all $\eta\in\mathbb{R}$
with some constant $C_\delta^{''}>0$ depending only on $\delta$.
Splitting the sum at $m=\lfloor\sqrt{k}/p\rfloor$ leads to
\begin{align*}
  \sum_{m=1}^{\infty}\left|\Phi\!\left(\frac{mp}{\sqrt{k}}\right)\right|
  &= \sum_{m \leq \sqrt{k}/p} \left| \Phi\left(\frac{mp}{\sqrt{k}}\right) \right| +
  \sum_{m > \sqrt{k}/p} \left| \Phi\left(\frac{mp}{\sqrt{k}}\right) \right|\\
  &\leq C_\delta^{''}\,\frac{\sqrt{k}}{p}
     +A_\delta\!\left(\frac{p}{\sqrt{k}}\right)^{-\beta}\int_{\sqrt{k}/p}^{+\infty}t^{-\beta}\,dt\\
  &\leq\left(C_\delta^{''}+\frac{A_\delta}{\beta-1}\right)\frac{\sqrt{k}}{p}.
\end{align*}
This, together with (\ref{Ekp}), proves
\[|E_k(p)|\leq \widetilde{C}_{\delta}\,k^{(1+\delta)/2}\,p^{-1} \quad {\rm with}\quad \widetilde{C}_{\delta}
=2\left(C_\delta^{''}+\frac{A_\delta}{\beta-1}\right).\]

\par \textbf{Case 2: $\delta=1$.} Denote
\begin{equation}\label{hu} H_k(u)=\begin{cases}\dfrac{1-e^{2\pi iku^2}}{u^2}, & u\neq 0,\\[4pt]-2\pi ik, & u=0.
\end{cases}\end{equation}
Since $H_k$ is even again, adopting Poisson summation formula results in
\[E_k(p)=\sum_{m=1}^{\infty}\widehat{H}_k(mp)+\frac{\pi ik}{p},\]
where $\widehat{H}_k(\xi)=2\sqrt{k}\,\Phi(\eta)$ with $\eta=\xi/\sqrt{k}$ and
\[\Phi(\eta)=\int_0^{\infty}\frac{1-e^{2\pi iv^2}}{v^2}\cos(2\pi\eta v)\,dv.\]
\par As in Case 1, decompose $\Phi(\eta)=I_1(\eta)+I_2(\eta)+I_3(\eta)$. In this case,
note that for $I_1(\eta)$ the Taylor expansion of $\varphi'$ now becomes
\[\varphi'(v) = 4\pi^2 v + R(v)\quad {\rm for\ all}\quad v\in [0,1],\]
where $|R(v)|\leq C v^3$ and $|R'(v)|\leq C' v^2$ with two constants $C,\ C'>0$.
Repeating the same integration-by-parts arguments in Case 1 yields
\[I_1(\eta)=O(\eta^{-2})\quad{\rm and}\quad I_2(\eta)=O(\eta^{-2}).\]
\par Applying Lemma \ref{Stein} again for $I_3(\eta)$ gets
\[I_3(\eta)=-\sqrt{2}\,e^{\pi i(\frac{1}{4}-\frac{\eta^2}{2})}\eta^{-2}+O(\eta^{-3}).\]
Hence, $\Phi(\eta)=O(\eta^{-2})$. Utilizing the same splitting argument obtains
\[\sum_{m=1}^{\infty}\left|\Phi\!\left(\frac{mp}{\sqrt{k}}\right)\right|
  \leq C^{''}\,\frac{\sqrt{k}}{p},\]
and therefore \[|E_k(p)|\leq C\,|k|^{(1+\delta)/2}\,p^{-1}\emph{}
\quad {\rm with}\quad C=2C^{''}.\]

\par \textbf{Step 2.} Secondly, operating the change of variable $u=x/\sqrt{k}$ acquires
\[M_{k}=\int_{0}^{\infty}\frac{1-e^{2\pi iku^{2}}}{u^{1+\delta}}du=k^{\frac{\delta}{2}}\int_{0}^{\infty}\frac{1-e^{2\pi i x^{2}}}{x^{1+\delta}}dx=k^{\frac{\delta}{2}}M_{1}.\]
Additionally, by setting $h=\sqrt{k}/p$ we derive
\[
p^{\delta}S_{k}(p)=\frac{1}{p}\sum_{n=1}^{\infty}\frac{1-e^{2\pi i k\left(\frac{n}{p}\right)^{2}}}{\left(\frac{n}{p}\right)^{1+\delta}}=k^{\frac{\delta}{2}}h\sum_{n=1}^{\infty}\frac{1-e^{2\pi i(nh)^{2}}}{(nh)^{1+\delta}}.
\]
Therefore,
\[
E_{k}(p)=p^{\delta}S_{k}(p)-M_{k}=k^{\frac{\delta}{2}}\left(h\sum_{n=1}^{\infty}\varphi(nh)-\int_{0}^{\infty}\varphi(x)dx\right)
=:k^{\frac{\delta}{2}}R(h),\]
where $\varphi(v)=(1-e^{2\pi iv^2})v^{-1-\delta}$ is the same as in the previous step.

We next show
\[
|E_{k}(p)|\leq C_{\delta}k^{\frac{\alpha+\delta}{2}}p^{-\alpha},\quad \forall\,0<\alpha\leq1.
\]
Since $E_{k}(p)=k^{\frac{\delta}{2}}R(h)$, it suffices to verify $|R(h)|\leq C_{\delta}h^{\alpha}$ for all $h>0$.

\textbf{Case 1.} $0<h\leq1$.

From the preceding bound $|E_{k}(p)|\lesssim \widetilde{C}_{\delta}k^{\frac{1+\delta}{2}}p^{-1}$,
we have $|R(h)|\leq\widetilde{C}_{\delta}h$ for all $h>0$.
Thus for $h\in(0,1]$, $|R(h)|\leq\widetilde{C}_{\delta}h\leq\widetilde{C}_{\delta}h^{\alpha}$.

\textbf{Case 2.} $h>1$.

In this case
\[
\begin{aligned}
|R(h)|&=\left|h\sum_{n=1}^{\infty}\frac{1-e^{2\pi i(nh)^{2}}}{(nh)^{1+\delta}}-\int_{0}^{\infty}
\frac{1-e^{2\pi i x^{2}}}{x^{1+\delta}}dx\right|\\
&=\left|h^{-\delta}\sum_{n=1}^{\infty}\frac{1-e^{2\pi i n^{2}h^{2}}}{n^{1+\delta}}-M_{1}\right|\\
&\leq2h^{-\delta}\zeta(1+\delta)+|M_{1}|\\
&\leq2\zeta(1+\delta)+|M_{1}|.
\end{aligned}
\]
Combining both cases attains $|R(h)|\leq C_{\delta}h^{\alpha}$ for all $h>0$, where
\[C_{\delta}=\max\left\{\widetilde{C}_{\delta},2\zeta(1+\delta)+|M_{1}|\right\}.
\]
\end{proof}

\subsection{Key lemmas}
The next two lemmas are crucial for proving the lower box dimension, which translate the nonvanishing of a certain arithmetic
series into a uniform lower bound
for weighted Legendre sums.  They are purely number-theoretic. We first prove a finite version (Lemma~\ref{lem:leg-same}) and then extend it to
infinite sums (Lemma~\ref{lem:legendre-sum}).

\begin{lemma}\label{lem:leg-same}
Let $\{r_k\}_{|k|\le N}$ be complex numbers with $r_0=0$ and $r_k=\overline{r_{-k}}$.
Let $D^{+}$ be the set of square-free integers in $[1,N]$.
If $R_{d_0,N}:=\sum\limits_{1\le m^{2}d_0\le N}r_{m^{2}d_0}\neq0$ for some $d_0\in D^{+}$,
then there exist positive integers $a,b$ with $0<b<a$ and a set of odd primes
\[\mathcal{P}=\{\text{odd\ prime}\ p:\ p\equiv b\,({\rm mod}{a})\}\]
such that
\par {\rm (i)} $\left(\frac{k}{p_1}\right)=\left(\frac{k}{p_2}\right)$ for each given $k$ with $|k|\le N$ and all $p_1,\,p_2\in\mathcal{P}$;
\par {\rm (ii)} $\Big|\sum\limits_{1\le|k|\le N}r_k\left(\frac{k}{p}\right)\Big|\ge \sqrt{2}\,|R_{d_0,N}|$ for every $p\in\mathcal{P}$.
\end{lemma}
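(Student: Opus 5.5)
The plan is to reduce everything to the values of the Legendre symbol on $-1$ and on the primes $q\le N$. Each of these depends only on the residue of $p$ modulo a fixed modulus, which gives (i). Then I choose those values by a character-averaging argument, which gives (ii). Let $Q$ be the set of primes $q\le N$, and put $a=8\prod_{q\in Q,\,q\ \mathrm{odd}}q$.

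\textbf{Step 1 (reduction).} Every $k$ with $1\le|k|\le N$ can be written uniquely as $k=\pm m^2d$ with $d\in D^{+}$. For a prime $p>N$ we have $p\nmid m$, so
\[\left(\frac{k}{p}\right)=\left(\frac{\pm1}{p}\right)\left(\frac{d}{p}\right)=\left(\frac{\pm1}{p}\right)\prod_{q\mid d}\left(\frac{q}{p}\right).\]
By Lemma \ref{lem:leg-law}(i),(ii), the value $\left(\frac{-1}{p}\right)$ depends only on $p$ mod $4$, and $\left(\frac{2}{p}\right)$ only on $p$ mod $8$. For odd $q\le N$, $\left(\frac{q}{p}\right)=\left(\frac{p}{q}\right)(-1)^{\frac{(p-1)(q-1)}{4}}$ depends only on $p$ mod $4q$. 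Hence all symbols $\left(\frac{k}{p}\right)$ with $|k|\le N$ are constant on any residue class $b$ mod $a$ with $\gcd(a,b)=1$, which is (i).

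Such a class contains no prime $\le N$. If $b\le N$, then $b$ is coprime to all primes $\le N$, which forces $b=1$, and then every $p\equiv1$ (mod $a$) satisfies $p>a>N$. So every $p\in\mathcal P$ exceeds $N$, the factorization above is legitimate, and Lemma \ref{lem:Dirichlet} makes $\mathcal P$ infinite.

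\textbf{Step 2 (the sum in terms of free signs).} Put $R_d=\sum_{1\le m^2d\le N}r_{m^2d}$ and $\sigma=\left(\frac{-1}{p}\right)$. Using $r_{-k}=\overline{r_k}$,
\[\sum_{1\le|k|\le N}r_k\left(\frac{k}{p}\right)=\sum_{d\in D^{+}}\left(\frac{d}{p}\right)T_d,\qquad T_d=R_d+\sigma\overline{R_d}.\]
So $T_d=2\,\mathrm{Re}\,R_d$ if $\sigma=1$, and $T_d=2i\,\mathrm{Im}\,R_d$ if $\sigma=-1$. Since $\max(|\mathrm{Re}\,R_{d_0}|,|\mathrm{Im}\,R_{d_0}|)\ge|R_{d_0}|/\sqrt2$, I fix $\sigma$ (equivalently $p$ mod $4$) so that $|T_{d_0}|\ge\sqrt2\,|R_{d_0,N}|$. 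This is where the constant $\sqrt2$ comes from.

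Now let the signs $s=(s_q)_{q\in Q}\in\{\pm1\}^{Q}$ be free, and set $\chi_s(d)=\prod_{q\mid d}s_q$. The maps $s\mapsto\chi_s(d)$, for $d\in D^{+}$, are distinct characters of the group $\{\pm1\}^{Q}$, because square-free $d$ correspond to distinct subsets of $Q$. Define $F(s)=\sum_dT_d\chi_s(d)$. By orthogonality (the finite-group analogue of Lemma \ref{orthogonality}),
\[2^{-|Q|}\sum_sF(s)\chi_s(d_0)=T_{d_0}.\]
Hence some $s^*$ satisfies $|F(s^*)|\ge|T_{d_0}|\ge\sqrt2\,|R_{d_0,N}|$.

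\textbf{Step 3 (realizing the signs; the main obstacle).} I must find a residue class $b$ mod $a$ with $\gcd(a,b)=1$ on which $\left(\frac{-1}{p}\right)=\sigma$ and $\left(\frac{q}{p}\right)=s^*_q$ for every $q\in Q$. The conditions are as follows.
\begin{itemize}
\item For $q=2$: choose $p$ mod $8$ compatible with the chosen $p$ mod $4$ and with $(-1)^{(p^2-1)/8}=s^*_2$. Both residues $\{1,5\}$ and $\{3,7\}$ modulo $8$ cover both values of $\left(\frac{2}{p}\right)$, so this is always possible.
\item For odd $q$: with $p$ mod $4$ fixed, the sign $(-1)^{(p-1)(q-1)/4}$ is fixed. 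Reciprocity then requires $\left(\frac{p}{q}\right)$ to equal a prescribed sign, which is met by taking $p$ mod $q$ to be a nonzero quadratic residue or nonresidue mod $q$; both exist since $q\ge3$.
\end{itemize}
Lemma \ref{lem:Chinese} combines these congruences modulo $8$ and the odd $q$ into one class $b$ mod $a$ with $0<b<a$, coprime to $a$. By Step 1, every $p\in\mathcal P$ then satisfies (i) and gives the sum in (ii) the value $F(s^*)$, which proves (ii).

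The only delicate points are the bookkeeping that the mod-$4$ choice is used consistently in the reciprocity signs, and that every $p\in\mathcal P$ exceeds $N$; the latter is handled as in Step 1.
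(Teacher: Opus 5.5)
Your argument is correct, and it reaches (ii) by a different route from the paper's. The paper works on the group $G=(\mathbb{Z}/a\mathbb{Z})^\times$ itself. It defines $\chi_d(b)=\left(\frac{d}{p}\right)$ for $p\equiv b \pmod a$ and verifies, via reciprocity, three facts: each $\chi_d$ is a homomorphism on $G$, each is even, and distinct $d$ give characters that already differ on the index-two subgroup $G_1=\{b\equiv 1 \pmod 4\}$. It then computes the second moment $\sum_{b\in G}|S(b)|^2=2\varphi(a)\sum_{d\in D^+}|R_d|^2$, using orthogonality on $G_1$ and on the coset $G_3=-G_1$, and concludes from the mean.

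You instead decouple the two sources of variation. First you fix $\sigma=\left(\frac{-1}{p}\right)$ according to whether $|\mathrm{Re}\,R_{d_0}|$ or $|\mathrm{Im}\,R_{d_0}|$ is larger. You then treat the values $\left(\frac{q}{p}\right)$, $q\in Q$, as free signs and apply orthogonality on $\{\pm1\}^{Q}$, where the character property is trivial, against the single test character $\chi_s(d_0)$. Finally, CRT plus reciprocity shows that every pattern $(\sigma,s)$ is realized by some coprime class mod $a$.

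Your realization (surjectivity) step is the dual of the paper's distinctness-of-characters step. It lets you skip checking multiplicativity and evenness of $\chi_d$ on $G$, and it makes the origin of the constant $\sqrt2$ transparent. The paper's second-moment route, in exchange, gives a bound involving all $R_d$ at once, namely $\max_b|S(b)|\ge\sqrt2\,\bigl(\sum_{d}|R_d|^2\bigr)^{1/2}$. Yours gives $2\max(|\mathrm{Re}\,R_{d_0}|,|\mathrm{Im}\,R_{d_0}|)$, and either suffices for the lemma.

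One small point: your discussion of the case $b\le N$ in Step 1 is redundant. Every prime $p\le N$ divides $a$, so it cannot lie in a class coprime to $a$, which is exactly the sentence you wrote just before that discussion.
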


\begin{proof}
\textbf{Step 1: Proof of (i).} Let $k$ be an integer whose prime factors do not exceed $N$, then $k$ can be factorized as
\[k=\pm2^{s_0}\prod_{i=1}^rq_i^{s_i}\]
with primes $q_i\le N$. Applying the complete multiplicativity of the Legendre symbol for each prime $p$ implies
\[\left(\frac{k}{p}\right)=\left(\frac{-1}{p}\right)^l\left(\frac{2}{p}\right)^{s_0}\prod_{i=1}^r\left(\frac{q_i}{p}\right)^{s_i},\]
where $l=1$ if $k<0$; otherwise, $l=0$. By Lemma \ref{lem:leg-law}(i)-(ii), we have
\[\left(\frac{-1}{p}\right)=(-1)^{\frac{p-1}{2}},\quad \left(\frac{2}{p}\right)=(-1)^{\frac{p^2-1}{8}}\]
and \[\left(\frac{q_i}{p}\right)=\left(\frac{p}{q_i}\right)(-1)^{\frac{(p-1)(q_i-1)}{4}}.\]
Hence, the value of $\left(\frac{k}{p}\right)$
is completely determined by the residue of $p$
modulo $8$ and modulo every odd prime $q\le N$.
\par Setting
\begin{equation} \label{a} a:=8\cdot\prod_{\substack{q\le N \\ q\ \text{odd\ prime}}}q,\end{equation}
it follows that $\left(\frac{k}{p}\right)$ depends only on the residue class of $p$ modulo $a$. In particular,
(i) holds for $\mathcal{P}$ taken to be any set of primes lying in a single residue class modulo $a$,
regardless of which residue class is chosen. The specific residue class will be determined in Step 3.

\par \textbf{Step 2: Reformulation in terms of Dirichlet characters.}
Define \[D^-:=-D^+=\{-d:\ d\in D^+\}.\]
Since $\mathcal{P}$ contains infinitely many primes by Dirichlet's theorem (Lemma \ref{lem:Dirichlet}),
it suffices to consider $p \in \mathcal{P}$ with $p > N$. For such $p$ and
any $k = m^2 d$ with $1\le m^2 d \le N$, we have $m \le \sqrt{N} < p$, hence
$\gcd(m, p) = 1$, and the complete multiplicativity of the Legendre symbol gives
\[\left(\frac{k}{p}\right)=\left(\frac{m^2d}{p}\right)= \left(\frac{m}{p}\right)^2\left(\frac{d}{p}\right)=\left(\frac{d}{p}\right).\]

\par Hence, the sum $S(p):=\sum\limits_{1\le|k|\le N}r_k\left(\frac{k}{p}\right)$ satisfies
\[S(p)=\sum_{d\in D^{+}}R_{d}\left(\frac{d}{p}\right)+\sum_{d\in D^{-}}R_{d}\left(\frac{d}{p}\right),\]
where $R_{d}=\sum\limits_{1\leq m^{2}d\leq N}r_{m^{2}d}$ if $d\in D^{+}$, and
$R_{d}=\sum\limits_{-N\leq m^{2}d\leq-1}r_{m^{2}d}$ if $d\in D^{-}$.

\par Note that $r_k=\overline{r_{-k}}$. Writing $d=-d'$ for $d\in D^{-}$ with $d'\in D^{+}$
and setting $\eta_p=\left(\frac{-1}{p}\right)\in\{\pm1\}$ yield
\[S(p)=\sum_{d\in D^{+}}\bigl(R_{d}+\eta_p\,\overline{R_{d}}\bigr)\left(\frac{d}{p}\right).\]

\par Now consider the finite group \(G = (\mathbb{Z}/a\mathbb{Z})^{\times}\), where $a$ is the same as in (\ref{a}).
Given $d\in D^{+}$, define
$\chi_{d}\colon G\to\{\pm1\}$ by
\[\chi_{d}(b)=\left(\frac{d}{p}\right)\quad {\rm for\ each}\ b\in G\ {\rm and\ any\ prime}\ p\equiv b\,({\rm mod}\ a).\]
By Step 1, the function $\chi_{d}$ is well defined on $G$.

\par Firstly, we affirm that $\chi_d$ is a Dirichlet character on $G$.
To verify multiplicativity, fix $b_1,b_2\in G$
and take primes $p_j\equiv b_j\pmod{a}$ for $j=1,2$ and $p_{12}\equiv b_1b_2\pmod{a}$.
Suppose $d$ has prime factorization $d=q_1\cdots q_r$, it suffices to show
$\left(\frac{q_i}{p_{12}}\right)=\left(\frac{q_i}{p_1}\right)\left(\frac{q_i}{p_2}\right)$ for each $1\leq i\leq r$.
Since $8q_i\mid a$, we have $p_{12}\equiv p_1p_2\pmod{8q_i}$.
\par If $q_i$ is an odd prime, then applying Lemma \ref{lem:leg-law}(i)-(ii) with
$\frac{p_1p_2-1}{2}\equiv\Big(\frac{p_1-1}{2}+\frac{p_2-1}{2}\Big)\pmod{2}$ and
complete multiplicativity of the Legendre symbol gains
\begin{eqnarray} \label{qip12} \left(\frac{q_i}{p_{12}}\right)&=&\left(\frac{p_{12}}{q_i}\right)(-1)^{\frac{(p_{12}-1)(q_i-1)}{4}}\nonumber \\
&=&\left(\frac{p_1}{q_i}\right)\left(\frac{p_2}{q_i}\right)(-1)^{\frac{(p_1-1)(q_i-1)}{4}}(-1)^{\frac{(p_2-1)(q_i-1)}{4}}\nonumber\\
&=&\left(\frac{q_i}{p_1}\right)\left(\frac{q_i}{p_2}\right).
\end{eqnarray}

\par If $q_i=2$, using Lemma \ref{lem:leg-law}(i) and $\frac{(p_1p_2)^2-1}{8}\equiv
\Big(\frac{p_1^2-1}{8}+\frac{p_2^2-1}{8}\Big)\pmod{2}$
guarantees (\ref{qip12}) also holds. Hence,
\[\chi_d(b_1b_2)=\left(\frac{d}{p_{12}}\right)=\prod_{i=1}^{r}\left(\frac{q_i}{p_{12}}\right)\left(\frac{q_i}{p_{12}}\right)
=\prod_{i=1}^{r}\left(\frac{q_i}{p_1}\right)\left(\frac{q_i}{p_2}\right)=\chi_d(b_1)\chi_d(b_2),\]
which proves the multiplicativity of $\chi_d$.
Furthermore, since $p>N\geq d$ implies $\gcd(d,p)=1$, we have $\chi_d(b)\in\{\pm1\}$ for all
$b\in G$. Thus $\chi_d$ is a Dirichlet character on $G$.

\par Moreover, the characters $\chi_d$ are \emph{even}, i.e.\ $\chi_d(-b)=\chi_d(b)$ for all $b\in G$.
By multiplicativity of $\chi(d)$, it suffices to show $\chi_d(-1)=1$. Recall that $a$ is defined as in
(\ref{a}), then for any prime $p\equiv-1\pmod{a}$ we have $p\equiv -1\equiv q'-1\pmod{q'}$ for each odd prime $q'\le N$
and $\frac{p-1}{2}$ is odd. This, together with Lemma \ref{lem:leg-law}(i)-(ii), implies
\[\left(\frac{q'}{p}\right)=\left(\frac{p}{q'}\right)(-1)^{\frac{(p-1)(q'-1)}{4}}=\left(\frac{-1}{q'}\right)(-1)^{\frac{(p-1)(q'-1)}{4}}=
(-1)^{\frac{q'-1}{2}}\cdot(-1)^{\frac{q'-1}{2}}=1.\]
Hence, by the definition of $\chi_d$ we have
\[\chi_d(-1)=\prod_{q'\mid d}\left(\frac{q'}{p}\right)=1.\]

\par Finally, we prove that $\chi_{d_1} \neq \chi_{d_2}$ whenever $d_1 \neq d_2\in D^+$.
Since $d_1, d_2$ are distinct square-free positive integers, without loss of generality, we may assume there exists a prime $q$
 with $q \mid d_1$ and $q \nmid d_2$.

\textbf{Case 1}: If $q=2$, then $2\mid d_1$ and $2\nmid d_2$. By the Chinese Remainder Theorem (Lemma \ref{lem:Chinese}),
there exists a unique $b\in G$ such that
\[b\equiv 5({\rm mod}\,8)\quad\text{and}\quad b\equiv 1({\rm mod}\,q')\ \text{for each odd prime } q'\le N.\]
Fix any prime $p$ with $p\equiv b\pmod a$. Notice that (\ref{a}) indicates $8\mid a$ and $q'\mid a$ for every odd prime $q'\le N$,
this implies
\[p\equiv 5({\rm mod}\,8)\quad\text{and}\quad p\equiv 1({\rm mod}\,q')\ \text{for each odd prime } q'\le N.\]

\par Firstly, by $p\equiv5({\rm mod}\,8)$ and quadratic reciprocity law (Lemma \ref{lem:leg-law}(ii)) we have
\[\left(\frac{2}{p}\right)=(-1)^{\frac{p^2-1}{8}}=-1.\]
Next, fix an odd prime $q'\le N$. Note that $p\equiv5({\rm mod}\,8)$ suggests $\frac{p-1}{2}$ is even, and then
\[(-1)^{\frac{(p-1)(q'-1)}{4}}=\Big((-1)^{\frac{p-1}{2}}\Big)^{\frac{q'-1}{2}}=1,\]
regardless of the value of $q'$. Combining this with quadratic reciprocity law (Lemma \ref{lem:leg-law}(ii))
and $p\equiv b\equiv 1\pmod{q'}$ gives
\[
\left(\frac{q'}{p}\right)=\left(\frac{p}{q'}\right)(-1)^{\frac{(p-1)(q'-1)}{4}}=\left(\frac{p}{q'}\right)=\left(\frac{1}{q'}\right)=1.
\]

\par Therefore,
\[\chi_{d_1}(b)=\left(\frac{2}{p}\right)\prod_{\substack{q'\mid d_1\\ q'\neq 2}}\left(\frac{q'}{p}\right)=(-1)\cdot 1=-1,
\qquad \chi_{d_2}(b)=\prod_{q'\mid d_2}\left(\frac{q'}{p}\right)=1,\]
where the last product runs over odd primes only, since $2\nmid d_2$. Hence $\chi_{d_1}(b)\neq\chi_{d_2}(b)$.

\textbf{Case 2}: When $q$ is an odd prime with $q\mid d_1$ and $q\nmid d_2$, since $(\mathbb{Z}/q\mathbb{Z})^{\times}$ is cyclic of order $q-1\ge 2$, exactly half of its elements are quadratic non-residues (see e.\,g.\,\cite[Theorem 9.1]{Rosen1984}), we may fix one, say $c$, so that $\left(\frac{c}{q}\right)=-1$. Similar to Case 1, by the Chinese Remainder Theorem (Lemma \ref{lem:Chinese}), there exists a unique $b\in G$ such that $b\equiv1({\rm mod}\,8),\ b\equiv c({\rm mod}\,q)$ and $b\equiv 1({\rm mod}\,q^{\prime})$ for each odd prime $q'\le N$ with $q'\neq q$.
\par Also, given any prime $p\equiv b\pmod a$, then $p\equiv 1\pmod 8$ and $p\equiv b\pmod{q'}$ for every odd prime $q'\le N$.
As $p\equiv1\pmod8$ implies $(-1)^{\frac{(p-1)(q'-1)}{4}}=1$ for every odd prime $q'$, repeating the same argument in Case 1 derives
\[
\left(\frac{2}{p}\right)=1,\qquad
\left(\frac{q'}{p}\right)=\left(\frac{p}{q'}\right)=\left(\frac{b}{q'}\right)=
\begin{cases}
\left(\dfrac{c}{q}\right)=-1, & q'=q,\\[4pt]
1, & q'\neq q.
\end{cases}
\]
This, together with $q\mid d_1$ and $q\nmid d_2$ guarantees
\[
\chi_{d_1}(b)=\left(\frac{q}{p}\right)=-1,\qquad \chi_{d_2}(b)=1,
\]
so $\chi_{d_1}(b)\neq\chi_{d_2}(b)$.
This proves $\chi_{d_1} \neq \chi_{d_2}$ whenever $d_1 \neq d_2\in D^+$.

\textbf{Step 3: Proof of (ii).} Setting $S(b):=S(p)$ for $p\equiv b\pmod{a}$, we have
\[S(b)=\sum_{d\in D^{+}}\bigl(R_{d}+\eta_{b}\,\overline{R_{d}}\bigr)\chi_{d}(b),\]
where $\eta_b={\left(\frac{-1}{p}\right)}$ for any $p\equiv b\pmod{a}$.

\par Observe that for two cases in Step 2, both distinguishing elements $b$ belong to
the subgroup $G_1=\{b\in G : b\equiv1\pmod4\}$. Partition $G$ into the subgroup $G_1$ and
its coset \(G_3 = \{b\in G : b\equiv3\pmod4\}\).
For $b\in G_{1}$ we have $\eta_b=1$ and
\[S(b) = 2\sum_{d\in D^+}{\rm Re}(R_d)\,\chi_d(b);
\]for \(b\in G_3\) then \(\eta_b = -1\) and
\[S(b)=2i\sum_{d\in D^+} {\rm Im}(R_d)\,\chi_d(b).\]

\par Since $G_1$ is a subgroup of $G$
and each $\chi_d$ is a Dirichlet character on $G$, the restriction $\chi_d|_{G_1}$ is also
a Dirichlet character on $G_1$. Moreover, since the distinguishing element $b$ constructed
above lies in $G_1$ for every pair $d_1\neq d_2$, the restrictions $\chi_d|_{G_1}$ are
pairwise distinct. Applying the orthogonality relations (Lemma \ref{orthogonality}) on $G_1$
gets
\[\frac{1}{|G_1|}\sum_{b\in G_1}\chi_{d_1}(b)\,\overline{\chi_{d_2}(b)}
=\begin{cases}1,&d_1=d_2,\\0,&d_1\neq d_2,\end{cases}\]
and hence
\begin{equation} \label{sumG1} \sum_{b\in G_1}|S(b)|^{2}
=4\sum_{d_1,d_2\in D^{+}}{\rm Re}(R_{d_1}){\rm Re}(R_{d_2})\sum_{b\in G_1}\chi_{d_1}(b)\overline{\chi_{d_2}(b)}
=2\varphi(a)\sum_{d\in D^{+}}\big({\rm Re}(R_{d})\big)^2.\end{equation}
For $G_3$, though it is not a subgroup of $G$, the map $b\mapsto -b$ is a bijection from $G_1$ to $G_3$,
so using the evenness of each $\chi_d$ gains
\begin{equation} \label{sumG3}
\sum_{b\in G_3}|S(b)|^{2}
=4\sum_{d_1,d_2\in D^{+}}{\rm Im}(R_{d_1}){\rm Im}(R_{d_2})\sum_{b\in G_1}\chi_{d_1}(-b)\overline{\chi_{d_2}(-b)}
=2\varphi(a)\sum_{d\in D^{+}}\big({\rm Im}(R_{d})\big)^2.\end{equation}

\par Combining (\ref{sumG1}) and (\ref{sumG3}) yields
\[\sum_{b\in G}|S(b)|^{2}=2\varphi(a)\sum_{d\in D^{+}}|R_{d}|^{2}.\]
Thus, dividing by $|G|=\varphi(a)$obtains the average
\[\frac{1}{\varphi(a)}\sum_{b\in G}|S(b)|^{2}=2\sum_{d\in D^{+}}|R_{d}|^{2}\ge 2|R_{d_0,N}|^{2}.\]

Hence, there exists at least one residue \(b\in G\) with $|S(b)|\ge\sqrt{2}\,|R_{d_0,N}|$.
Choosing \(\mathcal{P}\) as the set of primes congruent to this \(b\) modulo \(a\) completes the proof of (ii).
\end{proof}

\begin{lemma}\label{lem:legendre-sum}
Let \(\{r_k\}_{k\in\mathbb{Z}}\) satisfy $r_0=0,\ r_k=\overline{r_{-k}}$ and $\sum_{k\in\mathbb{Z}}|r_k|<\infty$.
Let \(D^{+}\) be the set of all square-free positive integers.
If \(R_{d_0}:=\sum_{m\in\mathbb{N}^+}r_{m^{2}d_0}\neq0\) for some \(d_0\in D^{+}\), then there exist
\(a,b\in\mathbb{N}^+\) with $b<a$ and a set of primes
\[\mathcal{P}=\{\text{odd\ prime}\ p:\ p\equiv b\,({\rm mod}\,a)\}\]
such that for all \(p\in\mathcal{P}\) we have
\[\Bigl|\sum_{k\in\mathbb{Z}} r_k\left(\frac{k}{p}\right)\Bigr|\ge \eta>0,\]
where \(\eta=\bigl(\frac{\sqrt{2}}{2}-\frac12\bigr)|R_{d_0}|\).
\end{lemma}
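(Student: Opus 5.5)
The plan is to reduce to the finite case, Lemma~\ref{lem:leg-same}, by truncation. Absolute summability of $\{r_k\}$ makes the tail negligible. The loss from $\sqrt2$ to $\frac{\sqrt2}{2}-\frac12$ is budget for the two truncation errors.

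First fix the truncation level. Since $\sum_k|r_k|<\infty$, choose $N$ with $N\ge d_0$ so that
\[\sum_{|k|>N}|r_k|\le \tfrac12|R_{d_0}|\quad\text{and}\quad \Bigl|\sum_{m^2d_0>N}r_{m^2d_0}\Bigr|\le \tfrac12|R_{d_0}|.\]
The second condition follows from the first, since it is bounded by the same tail sum. Then
\[|R_{d_0,N}|\ge |R_{d_0}|-\tfrac12|R_{d_0}|=\tfrac12|R_{d_0}|>0.\]
The finite family $\{r_k\}_{|k|\le N}$ still satisfies $r_0=0$ and $r_k=\overline{r_{-k}}$. Also $d_0\in D^+\cap[1,N]$. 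So Lemma~\ref{lem:leg-same} applies and gives $a,b$ and a residue class $\mathcal P$ with
\[\Bigl|\sum_{1\le|k|\le N}r_k\Bigl(\frac{k}{p}\Bigr)\Bigr|\ge\sqrt2\,|R_{d_0,N}|\ge\frac{\sqrt2}{2}|R_{d_0}|\quad\text{for all } p\in\mathcal P.\]
Strictly, that lemma's proof restricts to $p>N$, where the factorisation $k=m^2d$ and $\gcd(d,p)=1$ hold. I would either shrink $\mathcal P$ accordingly or note that this restriction is harmless; if needed, this is the one place where the statement's form of $\mathcal P$ should be checked.

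Next, for $p\in\mathcal P$, write the full sum as the finite sum plus the tail $\sum_{|k|>N}r_k\bigl(\frac kp\bigr)$. Since $|(\frac kp)|\le1$, the tail is bounded in modulus by $\sum_{|k|>N}|r_k|\le\frac12|R_{d_0}|$. The triangle inequality then gives
\[\Bigl|\sum_{k\in\mathbb Z}r_k\Bigl(\frac kp\Bigr)\Bigr|\ge\Bigl(\frac{\sqrt2}{2}-\frac12\Bigr)|R_{d_0}|=\eta>0.\]
The series converges absolutely, so the rearrangement into finite part and tail is legitimate.

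The main obstacle is choosing $N$ consistently for both error terms. The tail of the $d_0$-sum and the tail of the full sum must both fit within the constants. A single tail bound $\frac12|R_{d_0}|$ controls both, and the constants work out exactly to $\eta$. Note that the residue class from Lemma~\ref{lem:leg-same} depends on $N$, which is fine because $N$ is fixed once.
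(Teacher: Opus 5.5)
Your proposal is correct and is essentially the paper's own proof. You truncate at $N\ge d_0$ with $\sum_{|k|>N}|r_k|\le\frac12|R_{d_0}|$, apply Lemma~\ref{lem:leg-same} to get $\sqrt2\,|R_{d_0,N}|\ge\frac{\sqrt2}{2}|R_{d_0}|$, and absorb the tail by the triangle inequality. The only difference is your hedge about $p>N$, and it is unnecessary: the class $b$ produced by Lemma~\ref{lem:leg-same} is a unit modulo $a=8\prod q$ (product over odd primes $q\le N$), so every prime $p\equiv b\pmod a$ is coprime to every prime $\le N$ and automatically exceeds $N$, and no shrinking of $\mathcal{P}$ is needed.
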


\begin{proof} Since $\sum_{k\in\mathbb{Z}}|r_k|<\infty$, we can choose large $N\ge d_0$ such that
\(\sum_{|k|>N} |r_k| < \frac12
|R_{d_0}|\).
Define \( R_{d_0,N}:=\sum_{1\le m^{2}d_0\le N} r_{m^{2}d_0}\). Then by the choice of \(N\) and the triangle inequality we have
\[|R_{d_0,N}| \ge |R_{d_0}|-\sum_{|k|>N} |r_k| \geq \frac12 |R_{d_0}|.\]
Setting \(r_k=0\) for \(|k|>N\) and applying Lemma~\ref{lem:leg-same} to the finite sequence \(\{r_k\}_{|k|\le N}\),
we obtain integers \(a,b\) and a prime set \(\mathcal{P}\) such that for every \(p\in\mathcal{P}\)
\[\Bigl|\sum_{|k|\le N} r_k\left(\frac{k}{p}\right)\Bigr| \ge \sqrt{2}\,|R_{d_0,N}|.\]
For such \(p\) we have
\begin{align*}
\Bigl|\sum_{k\in\mathbb{Z}} r_k\left(\frac{k}{p}\right)\Bigr|
&\ge \Bigl|\sum_{|k|\le N} r_k\left(\frac{k}{p}\right)\Bigr| - \sum_{|k|>N} |r_k|\\
&\ge \sqrt{2}\,|R_{d_0,N}|-\tfrac12 |R_{d_0}|
\ge \bigl(\tfrac{\sqrt{2}}{2}-\tfrac12\bigr)|R_{d_0}|>0.
\end{align*}
Hence the desired lower bound holds with
\(\eta=\bigl(\frac{\sqrt{2}}{2}-\frac12\bigr)|R_{d_0}|>0.\)
\end{proof}

\par The following lemma will facilitate attaining the lower bound for
$\underline{\dim}_{\mathrm{B}}\bigl(graph(G_\delta)\bigr).$
\begin{lemma}\label{lem:fip} For sufficiently large integer $N$, take
\[\mathcal{P}_N=\bigl\{p\ {\rm prime}:\ C_1\sqrt{N}\leq p\leq C_2\sqrt{N} \bigr\}\]
with constants $0<C_1<C_2$. Define the intervals
\[D_{p,a}:=\Bigl[ \frac{a}{p},\, \frac{a}{p}+\frac{1}{p^2}\Bigr] \]
for each $p\in\mathcal{P}_N$ and $1\leq a\leq p-1$.
Let $F\colon [0,1]\to\mathbb{R}$ be continuous. Then
\[\underline{\dim}_{\mathrm{B}}\!\bigl(\operatorname{graph}(F)\bigr)
\;\geq\;
\varliminf_{N\to\infty}
\frac{\log\Bigl(N\displaystyle\sum_{p\in\mathcal{P}_N}\,\sum_{1\leq a\leq p-1}
\bigl|F\!\bigl(\tfrac{a}{p}\bigr)-F\!\bigl(\tfrac{a}{p}+\tfrac{1}{p^2}\bigr)\bigr|\Bigr)}{\log N}.
\]

\begin{proof}
The argument proceeds in the following three steps.

\par \textbf{Step 1: Intersection structure of $\mathcal{D}$.}
Set $\mathcal{D} = \{D_{p,a} : p\in\mathcal{P}_N,\, 1\leq a\leq p-1\}$.
We claim that each $D_{p_0,a_0}\in\mathcal{D}$ intersects at most $C$ other members of $\mathcal{D}$,
where $C$ is a constant depending only on $C_1$ and $C_2$.

\par Indeed, two intervals \(D_{p,a}\) and \(D_{p_0,a_0}\) intersect if and only if
\[\frac{a}{p} \leq \frac{a_0}{p_0}+\frac{1}{p_0^2} \quad \text{and} \quad \frac{a_0}{p_0} \leq \frac{a}{p}+\frac{1}{p^2},\]
which is equivalent to
\[\frac{a}{p}-\frac{a_0}{p_0}\in\left[-\frac{1}{p^2}, \frac{1}{p_{0}^2} \right],\]
that is,
\begin{equation}\label{intersection-criteria}-\frac{p_0}{p} \;\leq\; ap_0 - a_0p \;\leq\; \frac{p}{p_0}.\end{equation}
Set $j:=ap_0-a_0 p$. Since $p, p_0 \in [C_1\sqrt{N}, C_2\sqrt{N}]$, both ratios
$p_0/p$ and $p/p_0$ lie in $[C_1/C_2,\, C_2/C_1]$, thus $j$ must satisfy $|j| \leq C_2/C_1$.
Denote $K:=\lfloor C_2/C_1\rfloor + 1$, then $j$ ranges over at most $2K$ consecutive
integers.

\par For fixed $j,p_0$ and $a_0$, the equation $ap_0 - a_0 p = j$ gives
$a=(j+a_0 p)/p_0$, which requires $p_0\mid (j+a_0 p)$, i.e.,
\[a_0 p \;\equiv-j ({\rm mod}\ {p_0}).\]
Since $1\leq a_0\leq p_0-1$ and $p_0$ is prime, \(a_0^{-1} \pmod{p_0}\) exists, so the above congruence equation
is equivalent to $p\equiv -j a_0^{-1}\pmod{p_0}$. The number of primes \(p \in [C_1\sqrt{N}, C_2\sqrt{N}]\)
satisfying this congruence is at
most
\[
\left\lfloor\frac{(C_2-C_1)\sqrt{N}}{p_0}\right\rfloor + 1
\;\leq\; \left\lfloor\frac{C_2-C_1}{C_1}\right\rfloor + 1 \;=:\; M_0,
\]
which is an absolute constant. Summing over the at most $2K$ admissible values of $j$,
each $D_{p_0,a_0}$ intersects at most
\[C\,:=\; 2K M_0
\,=\;2\,\left(\left\lfloor\frac{C_2}{C_1}\right\rfloor+1\right)\,\left(\left\lfloor\frac{C_2-C_1}{C_1}\right\rfloor+1\right)
\]
other intervals in $\mathcal{D}$.

\par \textbf{Step 2: Bounding the number of $D_{p,a}$ meeting each partition interval.}
Let $I_k = [k/N, (k+1)/N]$ for $0\leq k\leq N-1$.
Next we shall show that for each $k$, at most $\widetilde{C}$ intervals in $\mathcal{D}$ meet $I_k$,
where $\widetilde{C}$ is a constant depending only on $C_1$ and $C_2$.

\par Actually, each $D_{p,a}\in\mathcal{D}$ has length $1/p^2\in[\delta,\Delta]$, where
\[\delta :=\frac{1}{C_2^2 N}, \qquad \Delta :=\frac{1}{C_1^2 N}.\]
If $D_{p,a}=[x, x+1/p^2]$ intersects $I_k=[k/N,\, (k+1)/N]$, then
\[
x \;\leq\; \frac{k+1}{N} \qquad\text{and}\qquad x+\frac{1}{p^2} \;\geq\; \frac{k}{N},
\]
so $x\in[k/N-\Delta,\, (k+1)/N]$, an interval of length $l+\Delta$ where $l=1/N$.

Partition the interval $[k/N-\Delta,\, (k+1)/N]$ into $R+1$ subintervals of length at most $\delta/2$,
where $R=\lfloor2(l+\Delta)/\delta\rfloor$ is a constant depending only on $C_1$ and $C_2$.
If two members $D_{p,a}$ and $D_{p',a'}$ of $\mathcal{D}$ have left endpoints $x, x'$ in the
same sub-interval, then $|x-x'|\leq \delta/2\leq 1/p^2=|D_{p,a}|$. Without loss of generality, assume
\(x<x'\). Then \(x'<x+\frac{\delta}{2}<x+1/p^2\), so \(x' \in D_{p,a}\), hence \(D_{p,a} \cap D_{p',a'} \neq \emptyset\).

\par We claim any subcollection $S\subset\mathcal{D}$ in which any two members intersect
satisfies $|S|\leq C+1$. Indeed, fix any given $D_0\in S$; every other element of $S$
intersects $D_0$, so by Step~1 there are at most $C$ such elements, giving $|S|\leq C+1$.
Therefore each sub-interval contains at most $C+1$ left endpoints, giving
\begin{equation}\label{eq:multiplicity}
{\rm card}\,\bigl\{(p,a):\ D_{p,a}\cap I_k\neq\emptyset\bigr\}
\;\leq\; (C+1)(R+1) \;=:\; \widetilde{C}.
\end{equation}

\par \textbf{Step 3: Lower bound on the box dimension.}
The minimal number $A_{N}(F)$ of squares in $\mathcal{M}_N$ required to cover \(\operatorname{graph}(F)\) satisfies
\[A_{N}(F)\;\geq\;N\sum_{k=0}^{N-1}\operatorname{osc}(F, I_k),\]
where $\operatorname{osc}(F, I)=\sup\limits_{x\in I} F(x)-\inf\limits_{x\in I} F(x)$.
Since $D_{p,a}\subset [a/p,\, a/p+1/p^2]$, for each pair $(p,a)$ we have
\[
\Bigl|F\!\Bigl(\frac{a}{p}\Bigr) - F\!\Bigl(\frac{a}{p}+\frac{1}{p^2}\Bigr)\Bigr|
\;\leq\;
\sum_{k\,:\,I_k\cap D_{p,a}\neq\emptyset}
\operatorname{osc}(F, I_k).
\]
Summing over all $(p,a)$ and exchanging the order of summation,
the multiplicity bound~\eqref{eq:multiplicity} gives
\[
\sum_{p\in\mathcal{P}_N}\,\sum_{1\leq a\leq p-1}
\Bigl|F\,\Bigl(\frac{a}{p}\Bigr) - F\!\Bigl(\frac{a}{p}+\frac{1}{p^2}\Bigr)\Bigr|
\;\leq\;
\widetilde{C}\sum_{k=0}^{N-1}\operatorname{osc}(F, I_k).
\]
\par Combining this with the box-counting lower bound yields
\[A_{N}(F)\;\geq\;\frac{N}{\widetilde{C}}\sum_{p\in\mathcal{P}_N}\,\sum_{1\leq a\leq p-1}
\Bigl|F\!\Bigl(\frac{a}{p}\Bigr) - F\!\Bigl(\frac{a}{p}+\frac{1}{p^2}\Bigr)\Bigr|
\]
Since $\widetilde{C}$ is a constant independent of $N$, we have
\begin{align*}
\underline{\dim}_{\mathrm{B}}(\operatorname{graph}(F))
&= \varliminf_{N \to \infty} \frac{\log A_N(F)}{\log N} \\
&\geq \varliminf_{N \to \infty} \frac{\log\left( N \sum\limits_{p\in \mathcal{P}_N}
\sum\limits_{1\leq a\leq p-1}\left| F\left(\frac{a}{p}\right)-F\left(\frac{a}{p}+\frac{1}{p^2}\right) \right| \right)}{\log N},
\end{align*}
as desired.
\end{proof}
\end{lemma}

\begin{remark}
{\rm It is worthwhile to point out intervals $D_{p,a}$ and $D_{p_0,a_0}$ can intersect for sufficiently large primes
$p$ and $p_0$.
Indeed, let $p_0, p$ be twin primes with $p=p_0+2$ and $p_0=2m+1$ for some integer $m\geq 1$.
Set $a_0=\lfloor p_0/2\rfloor=m$ and $a=\lfloor p/2\rfloor=m+1$ such that $ap_0-a_0 p=1$
and $1\in(-p_0/p,\, p/p_0)$, then (\ref{intersection-criteria}) indicates
$D_{p,a}\cap D_{p_0,a_0}\neq\emptyset$.}
\end{remark}

The following Lemma is crucial for estimating the error term in Theorem \ref{lower bound of dimension}.
\begin{lemma}\label{positive density} Let $\mathbb{P}$ be the set of all primes.
Suppose $\{a_k\}_{k\ge1}\subset\mathbb{C}$ satisfy $\sum_{k=1}^{\infty} |a_k| k^{\beta}<\infty$ for some constant $\beta>0$.
Then for any subset $\mathcal{P}\subset\mathbb{P}$ with natural density
$d_\mathbb{P}(\mathcal{P})>0$ and any constant $C>0$, the set
\[E_C:=\{ p\in\mathcal{P}:\ \sum_{p\mid k}|a_k|\le C\,p^{-\beta-1}\}\]
has natural density $d_{\mathcal{P}}(E_C)=1$.
\end{lemma}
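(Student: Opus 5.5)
The plan is to show that the complement $\mathcal{P}\setminus E_C$ has natural density zero in $\mathbb{P}$. Since $d_{\mathbb{P}}(\mathcal{P})>0$, this gives $d_{\mathcal{P}}(\mathcal{P}\setminus E_C)=0$, i.e.\ $d_{\mathcal{P}}(E_C)=1$. Set $T(p):=\sum_{p\mid k}|a_k|$ and $B:=\sum_{k\ge1}|a_k|k^{\beta}<\infty$. The key input is a weighted first-moment bound obtained by swapping the order of summation.

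\textbf{Step 1 (weighted moment).} For $x\ge2$, exchanging sums gives
\[
\sum_{p\le x}p^{\beta+1}T(p)=\sum_{k\ge1}|a_k|\sum_{\substack{p\mid k\\ p\le x}}p^{\beta+1}.
\]
Write $\omega(k)$ for the number of distinct prime divisors of $k$. Each prime $p$ dividing $k$ satisfies $p\le\min(k,x)$, so the inner sum is at most $\omega(k)\min(k,x)^{\beta+1}$. Since $\omega(k)\lesssim\log(k+1)$, we obtain
\[
\sum_{p\le x}p^{\beta+1}T(p)\lesssim x\log x\sum_{k}|a_k|k^{\beta}\,\frac{\min(k,x)^{\beta+1}}{x\,k^{\beta}\log x}\log(k+1).
\]
The weight $\min(k,x)^{\beta+1}\log(k+1)/(x k^{\beta}\log x)$ is uniformly bounded in $k$ (for $k\le x$ it is at most $k/x\le1$, and for $k>x$ it is at most $x^{\beta}\log(k+1)/(k^{\beta}\log x)$). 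The weight also tends to $0$ for each fixed $k$. By dominated convergence this yields
\[
\sum_{p\le x}p^{\beta+1}T(p)=o(x\log x),
\]
with the implied dependence only on the tail of $B$. The logarithmic bookkeeping in the range $k>x$ needs a little care. If it causes trouble, I will first trade a tiny power: using $\omega(k)\lesssim_\epsilon k^{\epsilon}$ with $\epsilon<\beta$ gives a bound by $\sum|a_k|k^{\beta}\cdot\min(1,k/x)^{1-\epsilon}$-type weights, which again go to $0$.

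\textbf{Step 2 (Chebyshev/Markov).} Let $\mathcal{Q}=\{p\le x:\ T(p)>Cp^{-\beta-1}\}$. Each $p\in\mathcal{Q}$ has $p^{\beta+1}T(p)>C$, so
\[
\#\mathcal{Q}\cdot C<\sum_{p\le x}p^{\beta+1}T(p)=o(x\log x).
\]
This alone is too weak: we need $o(x/\log x)$ to beat $\pi(x)$. This is the main obstacle. To fix it I will dyadically localize, restricting to primes $p\in(x/2,x]$. For $k\le x$ each such $k$ has at most one prime divisor in $(x/2,x]$, and $k\ge p>x/2$. For $k>x$ it has at most $\log k/\log(x/2)$ such divisors. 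Hence
\[
\sum_{x/2<p\le x}p^{\beta+1}T(p)\le\sum_{k>x/2}|a_k|\,x^{\beta+1}\Big(1+\frac{\log k}{\log x}\Big).
\]
Using $k^{\beta}\ge (x/2)^{\beta}$ and $\log k/\log x\le C_\epsilon (k/x)^{\epsilon}$, this is at most $x\cdot o(1)$ provided I use a small part of the decay. Concretely, for $k>x$ I bound $x^{\beta}(\log k/\log x)\le 2^{\beta} k^{\beta}$ when $\log k\le (k/x)^{\beta}\log x$. If the log factor still bites, I weaken the threshold: I use $T(p)>Cp^{-\beta-1}$ only against $p^{\beta'+1}$ with $\beta'<\beta$. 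The exponent loss is harmless because it only improves the count.

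\textbf{Step 3 (conclusion).} Step 2 gives
\[
\#\{x/2<p\le x:\ p\notin E_C\}\le C^{-1}x\,\epsilon(x)
\]
with $\epsilon(x)=\sum_{k>x/2}|a_k|k^{\beta}\cdot O(1)\to0$. Summing over dyadic blocks $x2^{-j}$ then bounds the exceptional primes up to $x$ by $o(x)$. To get $o(x/\log x)$ I will instead exploit the tail sum directly: the dyadic block counts are $\lesssim x2^{-j}\epsilon(x2^{-j})$. Restricting to the blocks with $j\le \log\log x$ costs only $O(x/\log x)$ trivially, and the remaining blocks are controlled by $x\,\epsilon(x/\log x)$. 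I expect the honest difficulty to be exactly this gain of a $\log x$ factor over the trivial count. If the pure $\ell^1$ moment does not suffice, I will use the strict slack $\beta$ versus a slightly smaller exponent, which provides a power saving $x^{-(\beta-\beta')}$ and makes the exceptional count $O(x^{1-\eta})=o(\pi(x))$. Finally I divide by $\pi_{\mathcal{P}}(x)\gtrsim x/\log x$ to conclude that $d_{\mathcal{P}}(E_C)=1$.
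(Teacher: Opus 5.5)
There is a genuine gap, and it is exactly the one you flag. Your Steps 1--2 are fine; the logarithmic bookkeeping disappears if you use $\sum_{p\mid k}p^{\beta}\le C_\beta k^{\beta}$. They give $\#\{x/2<p\le x:\ p\notin E_C\}\le C^{-1}C_\beta\,x\,\epsilon(x)$ with $\epsilon(x)=\sum_{k>x/2}|a_k|k^{\beta}\to 0$. But this block contains $\asymp x/\log x$ primes, so you would need $\epsilon(x)=o(1/\log x)$, and the hypothesis gives no rate at all on this tail. None of your repairs supplies it.

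In Step 3 the roles of the blocks are reversed. The blocks with $j\le\log\log x$ contain all but $o(\pi(x))$ of the primes up to $x$, so bounding them ``trivially by $O(x/\log x)$'' is bounding them by $\pi(x)$ itself. The remaining blocks were negligible from the start. The $\beta'<\beta$ device goes the wrong way: $T(p)>Cp^{-\beta-1}$ only gives $p^{\beta'+1}T(p)>Cp^{\beta'-\beta}$. The factor $x^{\beta-\beta'}$ lost in the threshold cancels what the lower-order moment gains, and you are back at $x\,\epsilon(x)$. A power saving requires slack between the exponent in the hypothesis and the exponent in the threshold, and the statement has none.

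In fact no argument can close the gap, because the statement as written is false. Take $\mathcal P=\mathbb P$ and let $A$ be the set of primes lying in $\bigcup_{j\ge1}(2^{2^j-1},2^{2^j}]$. Set $a_p=2Cp^{-1-\beta}$ for $p\in A$ and $a_k=0$ for all other $k$. The $j$-th block has $O(2^{2^j}/2^j)$ primes, each larger than $2^{2^j-1}$, so it contributes $O(2^{-j})$ to $\sum_{p\in A}1/p$. Hence $\sum_k|a_k|k^{\beta}=2C\sum_{p\in A}1/p<\infty$. Yet $\mathbb P\setminus E_C=A$ contains every prime in $(x/2,x]$ when $x=2^{2^j}$. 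So $E_C$ has lower density at most $1/2$, and $d_{\mathbb P}(E_C)$ does not exist.

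The paper's proof is the global form of your Step 1. It shows $\sum_{p\in\mathcal P}p^{\beta}S_p\le C_\beta\sum_k|a_k|k^{\beta}$, hence $\sum_{p\in\mathcal P\setminus E_C}1/p<\infty$. It then derives a contradiction from the assumption that $\mathcal P\setminus E_C$ has a positive natural density. That step tacitly assumes the density exists. What the argument really proves is that $\mathcal P\setminus E_C$ has lower density $0$ (i.e.\ $E_C$ has upper density $1$), and the example shows this cannot be upgraded.

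Your dyadic Markov argument does prove $d_{\mathcal P}(E_C)=1$ under the stronger hypothesis $\sum_k|a_k|k^{\beta}\log(k+1)<\infty$. Then $\epsilon(x)\le\frac{1}{\log(x/2)}\sum_{k>x/2}|a_k|k^{\beta}\log k=o(1/\log x)$. Summing the block bounds over dyadic blocks above $\sqrt{x}$ gives $o(x/\log x)$ exceptional primes. Equivalently, from $\sum_k|a_k|k^{\beta}<\infty$ you get density $1$ for the set defined with the weaker threshold $Cp^{-\beta'-1}$, $0<\beta'<\beta$. This corrected form is enough for Step~4 of the proof of Theorem~\ref{thm:lower}. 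There one only needs $|I_4|\lesssim p\sum_{p\mid k}|C_k|=o(p^{1/2-\delta})$, and $\beta'$ can be taken in $(\max\{0,\delta-\frac12\},\frac{\varepsilon+\delta}{2})$.
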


\noindent\begin{proof}
Denote $M=\sum_{k=1}^{\infty} |a_k|k^{\beta}$ and
$S_p = \sum\limits_{\substack{k\ge 1\\ p\mid k}} |a_k|$ for each prime $p$.
Define the weighted sum
\begin{equation}\label{weighted sum}
W:=\sum_{p\in\mathcal{P}} p^{\beta}S_p
=\sum_{p\in\mathcal{P}} p^{\beta}\sum_{\substack{k\ge 1\\ p\mid k}} |a_k|
=\sum_{k=1}^{\infty}|a_k|\sum_{\substack{p\in\mathcal{P}\\ p\mid k}} p^{\beta}.
\end{equation}
Claim: there exists a constant $C_{\beta}>0$ depending only on $\beta$ such that for all $k\ge2$ we have
\begin{equation}\label{estimate of p beta}
\sum_{\substack{p\in\mathcal{P}\\ p\mid k}} p^{\beta}\le C_{\beta}\, k^{\beta}.
\end{equation}
\par Indeed, each $k$ can be factorized as
\[k=p_1^{\alpha_1}p_2^{\alpha_2}\cdots p_s^{\alpha_s}\ {\rm with\ primes}\ p_1<p_2<\dots<p_s\
{\rm and}\ \alpha_i\ge1 (1\le i\le s),\]
which indicates
\[\sum_{\substack{p\in\mathcal{P}\\p\mid k}} p^{\beta}=\sum_{i=1}^s p_i^{\beta}.\]
This, together with $k^{\beta}\ge(p_1 p_2\cdots p_s)^{\beta}$ since $\alpha_i\ge1$, implies
\begin{equation}\label{Rk} R(k):=\frac{1}{k^{\beta}}\sum\limits_{\substack{p\in\mathcal{P}\\p\mid k}} p^{\beta}
\le\frac{\sum_{i=1}^s p_i^{\beta}}{\prod_{j=1}^s p_j^{\beta}}
=\sum_{i=1}^s \prod_{\substack{j=1\\j\ne i}}^s p_j^{-\beta}.\end{equation}
Let $\{q_n\}_{n\ge1}$ label the sequence of all primes ordered increasingly.
 Since $p_n\ge q_n\ge n$ and $q_n\sim n\log n$ as $n\rightarrow\infty$ (see e.g.\,\cite{Hadamard1896}),
 by (\ref{Rk}) we derive
\[R(k) \le \sum_{i=1}^s \prod_{\substack{j=1\\j\ne i}}^s q_j^{-\beta}
= \frac{\sum_{i=1}^s q_i^{\beta}}{\prod_{j=1}^s q_j^{\beta}}\lesssim\frac{s^{\beta+1}(\log s)^{\beta}}{(s!)^{\beta}}.
\]
Combining this with $\lim\limits_{s\rightarrow\infty}\frac{s^{\beta+1}(\log s)^{\beta}}{(s!)^{\beta}}=0$
suggests that there must exist a constant $C_{\beta}>0$ depending only on $\beta$ such that $R(k)\le C_{\beta}$ for all $k$,
so the Claim (\ref{estimate of p beta}) holds.

\par Substituting (\ref{estimate of p beta}) into (\ref{weighted sum}) gains
\[W=\sum_{p\in\mathcal{P}} p^{\beta}S_p
\le\sum_{k=1}^{\infty}|a_k|\,C_{\beta}\,k^{\beta}=C_{\beta}M<\infty.\]
Thus $\sum_{p\in\mathcal{P}}\frac{1}{p}\big(p^{1+\beta}S_p\big)<\infty.$
Set $\lambda_p:=p^{1+\beta}S_p$, then $\sum_{p\in\mathcal{P}}\frac{1}{p}\lambda_p<\infty.$

\par Suppose Lemma \ref{positive density} is false. Then there exist
a subset $\mathcal{P}\subset\mathbb{P}$ with natural density $d_\mathbb{P}(\mathcal{P})>0$ and a constant $C>0$
such that the set
\[E_C:=\{ p\in\mathcal{P}:\ \sum_{p\mid k}|a_k|\le C\,p^{-\beta-1}\}=\{ p\in\mathcal{P}:\ \lambda_p\le C \}\]
has natural density $d_{\mathcal{P}}(E_C)<1$. So its complement
\[G_C :=\mathcal{P}\backslash E_C=\{ p\in\mathcal{P}:\ \lambda_p>C\}\]
has natural density $d_{\mathcal{P}}(G_C)>0$. Then by $d_\mathbb{P}(\mathcal{P})>0$, we have $d_{\mathbb{P}}(G_C)>0$.
Using Lemma \ref{reciprocal-sum} with $\mathcal{B}=\mathbb{P}$ attains
\[\sum_{p\in G_C} \frac{1}{p} = \infty,\]
and thus
\[\sum_{p\in G_C}\frac{1}{p}\lambda_p\ge C\sum_{p\in G_C}\frac{1}{p}=\infty,\]
which contradicts
\[\sum_{p\in G_C}\frac{1}{p}\lambda_p\le\sum_{p\in \mathcal{P}}\frac{1}{p}\lambda_p<\infty.\]
Therefore, Lemma \ref{positive density} follows.\end{proof}

\subsection{Proof of Theorem \ref{thm:lower}}
Now we restate and prove Theorem \ref{thm:lower} step by step.

\begin{theorem} {\rm(Theorem \ref{thm:lower})} \label{lower bound of dimension}
Let $g(x)=\sum_{k\in\mathbb{Z}}C_ke^{2\pi i k x}$ with $\sum_{k=1}^{\infty}|C_k|k^{\frac{\varepsilon+\delta}{2}}<\infty$
for some $\varepsilon>0$. If there exists a positive square-free
integer $d_0$ such that
\[\sum_{\substack{k=m^{2}d_0\\ m\in\mathbb{N}^+}}C_k\,k^{\delta/2}\neq0,\]
then
\[\underline{\dim}_{\mathrm{B}}\bigl(graph(G_\delta)\bigr)\ge\frac74-\frac{\delta}{2}.\]
\end{theorem}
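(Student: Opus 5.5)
The plan is to apply Lemma \ref{lem:fip} with $F=G_\delta$, restricting the sum to primes $p\in\mathcal{P}_N$ that lie in the progression $\mathcal{P}$ given by Lemma \ref{lem:legendre-sum} and also in the good set $E_C$ of Lemma \ref{positive density}. We will show that for such $p$,
\[\sum_{a\in R(p)}\Bigl|G_\delta\bigl(\tfrac ap\bigr)-G_\delta\bigl(\tfrac ap+\tfrac1{p^2}\bigr)\Bigr|\gtrsim p^{3/2-\delta}.\]
The number of such $p$ in $[C_1\sqrt N,C_2\sqrt N]$ is $\gtrsim \sqrt N/\log N$ by Lemma \ref{lem:Dirichlet} together with the density statements. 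Summing then gives
\[N\sum_{p}\sum_a|\Delta G_\delta|\gtrsim N\cdot \frac{\sqrt N}{\log N}\cdot N^{3/4-\delta/2}=N^{9/4-\delta/2}/\log N.\]
The grid normalization in Lemma \ref{lem:fip} gives $A_N\ge N\sum\mathrm{osc}$, and this should produce the exponent $7/4-\delta/2$. I would re-check the bookkeeping: either the sum over $a$ contributes $p\cdot p^{1/2-\delta}$ and the count of $p$ contributes $N^{1/2}$, giving $N\cdot N^{1/2}\cdot N^{-1/4-\delta/2}\cdot N^{1/2}$; or the target per-pair size is $p^{-1/2-\delta}$. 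Either way, a per-difference size $\asymp p^{-1/2-\delta}\asymp N^{-1/4-\delta/2}$ on a positive proportion of $\asymp N$ pairs yields $A_N\gtrsim N^{7/4-\delta/2}/\log N$. So the real goal is the average lower bound $\frac1p\sum_{a\in R(p)}|\Delta|\gtrsim p^{-1/2-\delta}$.

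To get it, expand in Fourier series:
\[G_\delta(\tfrac ap)-G_\delta(\tfrac ap+\tfrac1{p^2})=\sum_k C_k\sum_n n^{-1-\delta}e^{2\pi ikn^2a/p}\bigl(1-e^{2\pi ikn^2/p^2}\bigr).\]
Sum this over $a\in R(p)$, weighted by a unimodular factor chosen to make a single sum; it is easier to bound $\bigl|\sum_{a\in R(p)}\Delta(a)\bigr|$ from below, since that controls $\sum|\Delta|$.

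For $p\nmid kn$, use \eqref{sum-Rp}: $\sum_{a\in R(p)}e^{2\pi ikn^2a/p}=\frac12(\epsilon_p\sqrt p(\frac kp)-1)$. For $p\mid n$ or $p\mid k$ the inner sum is at most $(p-1)/2$ but the terms are sparse. The main term is
\[\frac{\epsilon_p\sqrt p}{2}\sum_k C_k\Bigl(\frac kp\Bigr)S_k(p).\]
By Lemma \ref{lem:error estimate}, $p^\delta S_k(p)=M_k+E_k(p)$ with $M_k=k^{\delta/2}M_1$ for $k>0$ and $M_{-k}=\overline{M_k}$. Hence the main term equals
\[\frac{\epsilon_p\sqrt p}{2}\,p^{-\delta}\sum_k r_k\Bigl(\frac kp\Bigr),\qquad r_k=C_kM_k .\]
Here $r_k=\overline{r_{-k}}$ because $g$ is real, and $\sum_{m}r_{m^2d_0}=M_1\sum C_{m^2d_0}(m^2d_0)^{\delta/2}\neq0$, with $M_1\ne0$ checked directly. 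Also $\sum|r_k|<\infty$. So Lemma \ref{lem:legendre-sum} gives, for $p\in\mathcal{P}$, a main term of size $\ge \frac{\eta}{2}p^{1/2-\delta}$.

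Three error terms remain.

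(a) The $-\frac12$ terms contribute $\lesssim\sum|C_k||S_k(p)|\lesssim p^{-\delta}\sum|C_k||k|^{\delta/2}$, which is $o(p^{1/2-\delta})$.

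(b) The terms $E_k(p)$, taken with $\alpha=\varepsilon$ (assume $\varepsilon\le1$), contribute $\sqrt p\,p^{-\delta}\sum|C_k||k|^{(\varepsilon+\delta)/2}p^{-\varepsilon}=o(p^{1/2-\delta})$ by the hypothesis.

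(c) The terms with $p\mid n$ or $p\mid k$ are the main obstacle. For $p\mid n$, write $n=pn'$; these contribute $\lesssim p\sum_k|C_k|\sum_{n'}(pn')^{-1-\delta}|1-e^{2\pi ikn'^2}|=0$ exactly, since $kn'^2\in\mathbb{Z}$. For $p\mid k$ the inner sum is $(p-1)/2$, so the contribution is $\lesssim p\sum_{p\mid k}|C_k||S_k(p)|\lesssim p^{1-\delta}\sum_{p\mid k}|C_k||k|^{\delta/2}$. Apply Lemma \ref{positive density} with $a_k=|C_k|k^{\delta/2}$ and $\beta=\varepsilon/2$. For $p\in E_C$, which has density $1$ inside $\mathcal{P}$, this is $\le Cp^{1-\delta}p^{-1-\varepsilon/2}=o(p^{1/2-\delta})$.

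Combining these, $|\sum_{a\in R(p)}\Delta(a)|\ge \frac{\eta}{4}p^{1/2-\delta}$ for all large $p\in\mathcal{P}\cap E_C$. By the triangle inequality, $\sum_{a}|\Delta(a)|\ge\frac\eta4 p^{1/2-\delta}$, which is the lower bound needed, up to the power of $p$ identified in the bookkeeping above.

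I expect the hardest step to be getting the exponent exactly right: the trivial bound $\sum_a|\Delta|\ge|\sum_a\Delta|$ may lose a factor $\sqrt p$. If it does, I would fall back to a Cauchy–Schwarz/$L^2$ computation over $a$, or use the Gauss sum with twisting weights $e^{-2\pi i ha/p}$ to exhibit individual large differences. The density bookkeeping needed to keep $\gtrsim\sqrt N/\log N$ admissible primes is routine.
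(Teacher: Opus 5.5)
Your proposal is essentially the paper's proof. You sum $\Delta G_\delta$ over $a\in R(p)$ via \eqref{sum-Rp}, with the $p\mid n$ terms vanishing exactly, as you note. You split $p^{\delta}S_k(p)=M_k+E_k(p)$ and extract the main term $\tfrac12\epsilon_p p^{1/2-\delta}\sum_k r_k\bigl(\tfrac kp\bigr)$ with Lemma~\ref{lem:legendre-sum}. You then dispose of the paper's $I_2,I_3,I_4$ using Lemmas~\ref{lem:error estimate} and~\ref{positive density}, and conclude with Lemma~\ref{lem:fip}. For $I_4$ you take $a_k=|C_k|k^{\delta/2}$, $\beta=\varepsilon/2$ with $|S_k(p)|\lesssim p^{-\delta}|k|^{\delta/2}$, where the paper takes $a_k=C_k$, $\beta=\frac{\varepsilon+\delta}{2}$ with $|S_k(p)|\le 2\zeta(1+\delta)$; both variants work.

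Your closing worry is unfounded. The bound $\bigl|\sum_{a\in R(p)}\Delta\bigr|\gtrsim p^{1/2-\delta}$ is exactly the required average bound $\frac1p\sum_a|\Delta|\gtrsim p^{-1/2-\delta}$, so the triangle inequality already suffices. The count $N\cdot\frac{\sqrt N}{\log N}\cdot N^{1/4-\delta/2}$ then gives $\frac74-\frac\delta2$ directly, and no $L^2$ or twisted-Gauss-sum fallback is needed. The opening target $p^{3/2-\delta}$ was simply a miscount.
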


\begin{proof}
\emph{{\rm Step 1:} Variation on quadratic residues.}
\par For a prime \(p\) and \(a\in R(p)\), where $R(p)$ is the set of quadratic residues mod $p$
as in (\ref{sum-Rp}), define
\[\Delta G_\delta\!\left(\frac{a}{p}\right)=G_\delta\!\left(\frac{a}{p}\right)
-G_\delta\!\left(\frac{a}{p}+\frac1{p^{2}}\right)\]
and
\[
S_k(p)=\sum_{n=1}^{\infty}\frac{1-e^{2\pi i k n^2/p^2}}{n^{1+\delta}}.
\]

\par\noindent Inserting the Fourier expansion of \(g\) gives
\begin{align*}
\Delta G_\delta\!\left(\frac{a}{p}\right)
&= \sum_{n=1}^\infty\frac{1}{n^{1+\delta}}\sum_{k\in\mathbb{Z}}C_k\bigl(e^{2\pi ikn^{2}a/p}-e^{2\pi ikn^{2}(a/p+1/p^{2})}\bigr)\\
&=\sum_{k\in\mathbb{Z}}C_k e^{2\pi ikn^{2}a/p}\sum_{n=1}^{\infty}\frac{1-e^{2\pi ikn^2/p^2}}{n^{1+\delta}}\\
&= \sum_{k\in\mathbb{Z}}C_k S_k(p)\,e^{2\pi ikn^{2}a/p}.
\end{align*}
By summing $\Delta G_\delta$ over all quadratic residues and using the Gauss sum evaluation we obtain
\[\sum_{a\in R(p)}\Delta G_\delta\!\left(\frac{a}{p}\right)
=\frac12\sum_{\substack{k\in\mathbb{Z}\\p\nmid k}}C_kS_k(p)\bigl(\epsilon_p\sqrt{p}\,\left(\frac{k}{p}\right)-1\bigr)
+\frac{p-1}{2}\sum_{\substack{k\in\mathbb{Z}\\p\mid k}}C_kS_k(p).\]

\emph{{\rm Step 2:} Isolating the main term.}
By Lemma~\ref{lem:error estimate}, \(S_k(p)=p^{-\delta}M_k + p^{-\delta}E_k(p)\). Substitute this into the expression above and split the sum
into the following four parts as
\[\sum_{a\in R(p)}\Delta G_\delta(a/p):= I_1 + I_2 + I_3 + I_4,\]
where
\[\begin{aligned}
I_1&=\frac12 p^{-\delta}\epsilon_p\sqrt{p}\sum_{\substack{k\in\mathbb{Z}\\p\nmid k}} C_kM_k\left(\frac{k}{p}\right),\quad\quad\quad\quad
I_2=-\frac12 p^{-\delta}\sum_{\substack{k\in\mathbb{Z}\\p\nmid k}} C_k M_k,\\
I_3&=\frac12 p^{-\delta}\sum_{\substack{k\in\mathbb{Z}\\p\nmid k}} C_kE_k(p)\bigl(\epsilon_p\sqrt{p}\,\left(\frac{k}{p}\right)-1\bigr),\quad
I_4=\frac{p-1}{2}\sum_{\substack{k\in\mathbb{Z}\\p\mid k}} C_k S_k(p).
\end{aligned}\]

\emph{{\rm Step 3:}  Estimating \(I_1\) via the Legendre symbol condition.}

Set\ $r_k:=C_k M_k$. For $k\geq 0$, change of variable \(u = t/\sqrt{k}\) shows that
\[M_k = \int_0^\infty \frac{1-e^{2\pi i k u^{2}}}{u^{1+\delta}}\,du
= k^{\delta/2}\int_0^\infty \frac{1-e^{2\pi i t^{2}}}{t^{1+\delta}}\,dt = k^{\delta/2} M_1.\]
 First $M_{-k}=\overline{M_k}$ implies $r_k = \overline{r_{-k}}$. On the other hand
 we have
\[\sum_{k\in\mathbb{Z}}|r_k|=2|M_1|\sum_{k=1}^{\infty}|C_k| k^{\frac{\delta}{2}}
\lesssim\sum_{k=1}^{\infty}|C_k| k^{\frac{\varepsilon+\delta}{2}}<\infty.\]

\par Furthermore, combining $r_k=C_kk^{\delta/2} M_1$ and the non-vanishing condition (\ref{nonvanishing}) we have
\(\sum_{\substack{k=m^{2}d_0, m\in\mathbb{N}^+}} r_k \neq 0\).
Thus by Lemma~\ref{lem:legendre-sum}, there exist
\(a,b\in\mathbb{N}^+\) with $b<a$ and a set of primes
\[\mathcal{P}=\{\text{odd\ prime}\ p:\ p\equiv b\,({\rm mod}\,a)\}\]
such that for all \(p\in\mathcal{P}\) we have
\[\Bigl|\sum_{k\in\mathbb{Z}} r_k\left(\frac{k}{p}\right)\Bigr|\ge \eta>0,\]
where \(\eta=\bigl(\frac{\sqrt{2}}{2}-\frac12\bigr)|R_{d_0}|\) and
\(R_{d_0}=\sum\limits_{\substack{k=m^{2}d_0\\ m\in\mathbb{N}^+}}C_k\,k^{\delta/2}\).
\par Also, $\left(\frac{k}{p}\right)=0$ if $p\mid k$, it follows that
\[|I_1| = \frac12 p^{1/2-\delta} \Bigl|\sum_{\substack{k\in\mathbb{Z}\\p\nmid k}}r_k\left(\frac{k}{p}\right)\Bigr|
=\frac12 p^{1/2-\delta} \Bigl|\sum_{k\in\mathbb{Z}}r_k\left(\frac{k}{p}\right)\Bigr|\gtrsim p^{1/2-\delta}.
 \]

\emph{{\rm Step 4:}\ Bounding the error terms.}
\par For \(I_2\), using $M_k = k^{\delta/2} M_1$ we have
\[|I_2|\le\frac12 p^{-\delta}\sum_{k\in\mathbb{Z}}|C_k M_k|\lesssim
p^{-\delta}\sum_{k=1}^{\infty}|C_k|k^{\frac{\varepsilon+\delta}{2}}\lesssim p^{-\delta}.\]
For $I_3$, applying Lemma \ref{lem:error estimate} with $0<\alpha<\min\{\varepsilon, 1\}$ gets
\begin{align*}
|I_3|
&\le\frac12 p^{1/2-\delta}\sum|C_k||E_k(p)|\lesssim
p^{1/2-\delta-\alpha}\sum_{k=1}^{\infty}|C_k|k^{\frac{\varepsilon+\delta}{2}}
\lesssim p^{\frac12 -\delta-\alpha}.
\end{align*}
\par For \(I_4\), since $\sum_{k=1}^{\infty}|C_k| k^{\frac{\varepsilon+\delta}{2}}<\infty$ and Lemma \ref{lem:Dirichlet} implies
$d_\mathbb{P}(\mathcal{P})=\frac{1}{\varphi(a)}>0$, applying Lemma \ref{positive density} with $a_k=C_k$,
$\beta=\frac{\varepsilon+\delta}{2}$ and $C=1$ obtains a subset $E_1\subseteq\mathcal{P}$
with $d_{\mathcal{P}}(E_1)>0$ such that
\[\sum_{p\mid k} |C_k| \le  p^{{-\frac{\varepsilon+\delta}{2}}-1}\]
for all $p\in E_1$. This, together with $|S_k(p)|\le2\zeta(1+\delta)$, yields
\[|I_4|\leq p\Big|\sum_{\substack{k\in\mathbb{Z}\\p\mid k}} C_k S_k(p)\Big|\lesssim p\sum_{p\mid k}|C_k|
\lesssim p^{{-\frac{\varepsilon+\delta}{2}}}.\]

\par Thus, combining the above estimates for all $p\in E_1$ and $0<\delta\leq1$ attains that
\begin{equation}\label{asymptotic lower bound}
\Bigl|\sum_{a\in R(p)}\Delta G_\delta(a/p)\Bigr|\gtrsim p^{1/2-\delta}.
\end{equation}

\emph{{\rm Step 5:} From variation to dimension.}
\par Let $\mathcal{P}$ and $E_1$ be the same as in Step 3 and 4.
Fix a large integer \(N\) and choose  primes \(p\in E_1\) with \(p\asymp N^{1/2}\).
As $N\rightarrow\infty$, by Lemma \ref{lem:Dirichlet} we have
\[{\rm card}\,\bigl\{p\in E_1:\ p\asymp N^{1/2}\bigr\}\sim \frac{N^{1/2}}{\varphi(a)\log(N^{1/2})}.\]
Recall that $R(p)$ consists of quadratic residues ${\rm mod}\ p$.
The interval \([a/p,\,a/p+1/p^{2}]\) has length \(1/p^{2}\sim N^{-1}\) for each \(a\in R(p)\),
by Lemma \ref{lem:fip} and (\ref{asymptotic lower bound}) we have
\begin{align*}
\underline{\dim}_{\mathrm{B}}(\operatorname{graph}(G_{\delta}))
&\geq \varliminf_{N \to\infty} \frac{\log\left( N\sum\limits_{p\in\mathcal{P}}\sum\limits_{1\leq a\leq p-1}|\Delta G_\delta(a/p)|\right)}{\log N}\\
&\geq \varliminf_{N \to\infty} \frac{\log\left( N\sum\limits_{p\in E_1}\Big|\sum\limits_{a\in R(p)}
\Delta G_\delta(a/p)\Big|\right)}{\log N}\\
&\geq \varliminf_{N\to\infty}\frac{\log\left( N\sum\limits_{p\in E_1}p^{\frac12-\delta}\right)}{\log N}\\
&=\frac74-\frac\delta2.
\end{align*}
\end{proof}
\section{Upper bound for the upper box dimension}\label{sec:upper}
In this section, at first we analyze convergence of a series on the Fourier coefficients of $g$.
Secondly, we try to obtain asymptotic upper bound for partial exponential sums concerning $g$ by Weyl's differencing.
Based on this, we give the proof of Theorem \ref{thm:upper}.
\subsection{Preparatory lemmas}
\begin{lemma}\label{Fourier-coefficient}
Let $g(x)$ be a Lipschitz continuous periodic function on $\mathbb{R}$ with period 1 and Fourier expansion
\[g(x)=\sum_{k\in\mathbb{Z}}C_k e^{2\pi ikx}.\]
Then \[\sum_{k\in\mathbb{Z}}|C_k|^2 k^2<\infty.\]
\end{lemma}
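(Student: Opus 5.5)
The plan is to go through the Fourier coefficients of $g'$ and Parseval's identity. Since $g$ is Lipschitz with constant $L$ (say), it is absolutely continuous. Hence $g'$ exists almost everywhere, is essentially bounded by $L$, and in particular lies in $L^2[0,1]$. The claim then reduces to identifying the Fourier coefficients of $g'$ with $2\pi i k C_k$.

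First, we compute $\widehat{g'}(k)=\int_0^1 g'(x)e^{-2\pi i kx}\,dx$ by integration by parts. This is legitimate for absolutely continuous functions: the product $g(x)e^{-2\pi i kx}$ is absolutely continuous, so the fundamental theorem of calculus applies to it. The boundary term $\bigl[g(x)e^{-2\pi i kx}\bigr]_0^1$ vanishes by $1$-periodicity of $g$. We are left with
\[
\widehat{g'}(k)=2\pi i k\int_0^1 g(x)e^{-2\pi i kx}\,dx=2\pi i k\,C_k .
\]
Here we use that the $C_k$ in the expansion of $g$ are its Fourier coefficients. This holds because $g$ is continuous, hence in $L^2$, and the expansion is understood in the $L^2$ sense (or, since $g$ is Lipschitz, the series even converges).

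Second, we apply Parseval's identity (Bessel's inequality already suffices) to $g'\in L^2[0,1]$:
\[
4\pi^2\sum_{k\in\mathbb{Z}}|C_k|^2k^2=\sum_{k\in\mathbb{Z}}|\widehat{g'}(k)|^2=\int_0^1|g'(x)|^2\,dx\le L^2<\infty,
\]
which gives the assertion.

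The only point needing care is the justification of integration by parts with merely a Lipschitz (rather than $C^1$) function. Absolute continuity of Lipschitz functions settles this. An alternative route would be to avoid derivatives entirely: one bounds $\sum_k|C_k|^2|e^{2\pi i kh}-1|^2=\|g(\cdot+h)-g\|_2^2\le L^2h^2$, divides by $h^2$, and lets $h\to0$ using Fatou's lemma. This would serve as a fallback if one prefers not to invoke differentiability almost everywhere.
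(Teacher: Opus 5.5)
Your proposal is correct and follows essentially the same route as the paper: absolute continuity of the Lipschitz function $g$ gives $g'\in L^\infty[0,1]\subset L^2[0,1]$, integration by parts identifies the Fourier coefficients of $g'$ as $2\pi i k C_k$, and Parseval's identity yields $4\pi^2\sum_k k^2|C_k|^2=\int_0^1|g'(x)|^2\,dx\le L^2<\infty$. Your explicit justification of the integration by parts via absolute continuity is a useful addition but not a change of method. The optional difference-quotient/Fatou fallback is also valid but not needed.
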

\begin{proof}
Since a Lipschitz continuous function is absolutely continuous, its derivative $g'(x)$ exists almost everywhere, and for almost all $x$ where
$g'(x)$ exists, we have \[|g'(x)| \leq L,\] where $L$ is the Lipschitz constant of $g(x)$.
\par Indeed, since $g(x)$ is Lipschitz,
\[\left|\frac{g(x+h)-g(x)}{h}\right| \leq \frac{Lh}{h}=L.\]
Thus,\[|g'(x)|=\left|\lim_{h \to 0} \frac{g(x+h)-g(x)}{h}\right| = \lim_{h \to 0} \left|\frac{g(x+h) - g(x)}{h}\right| \leq L.\]
It follows that $g'(x)\in L^\infty[0,1]$ and $|g'(x)|\leq L$ almost everywhere. Hence, $g'(x) \in L^2[0,1]$.

\par Write the Fourier expansion of $g'(x)$ as
\[g'(x)=\sum_{k\in\mathbb{Z}} \widetilde{C}_k e^{2\pi i k x}.\]
Integration by parts shows
\begin{align*}
\widetilde{C}_k &= \int_0^1 g'(x) e^{-2\pi i k x}\, dx \\
&=\left. e^{-2\pi i k x} g(x)\right|_0^1 + 2\pi i k \int_0^1g(x)e^{-2\pi ikx}\, dx \\
&=2\pi ikC_k.
\end{align*}
\par By Parseval identity, we get
\[\int_0^1 |g'(x)|^2\, dx=\sum_{k \in \mathbb{Z}} |\widetilde{C}_k|^2.\]
Substituting $\widetilde{C}_k = 2\pi i k C_k$ yields
\[\int_0^1 |g'(x)|^2 \, dx = \sum_{k \in \mathbb{Z}} |2\pi i k C_k|^2 = 4\pi^2 \sum_{k \in \mathbb{Z}} k^2 |C_k|^2 < \infty.\]
Therefore,\[\sum_{k \in \mathbb{Z}} |C_k|^2 k^2<\infty.\]
\end{proof}
\begin{lemma}\label{weyl}
Denote $e(x)=e^{2\pi ix}$ for $x\in\mathbb{R}$.
Let $M, N, P$ be positive integers, $1\leq a<p$, $\gcd(a, p)=1$, and $\left|x - {a}{p}\right|\leq \frac{1}{p^2}$. Then
\[\left| \sum_{n=M}^{M+N-1} e(k n^2 x)\right|\lesssim\sqrt{k}\left(\frac{N}{\sqrt{p}}+\sqrt{N\log p}\right)+\sqrt{p\log p}.\]
\end{lemma}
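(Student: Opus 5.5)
The plan is Weyl differencing (squaring out) applied to the quadratic sum, followed by the standard reciprocal-sum bound near a rational with denominator $p$. I read the hypothesis as $|x-a/p|\le 1/p^2$; the statement has a typo "${a}{p}$". Write $S=\sum_{n=M}^{M+N-1}e(kn^2x)$ and expand
\[|S|^2=\sum_{|h|<N}\ \sum_{n\in I_h} e\bigl(kx(2nh+h^2)\bigr),\]
where $I_h$ is an interval of length at most $N$. Summing the inner geometric series gives
\[|S|^2\le N+2\sum_{h=1}^{N-1}\min\Bigl(N,\frac{1}{2\|2khx\|}\Bigr),\]
with $\|\cdot\|$ the distance to the nearest integer. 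Substituting $m=2kh$ and allowing overcounting, we get
\[|S|^2\lesssim N+\sum_{m=1}^{2kN}\min\bigl(N,\|mx\|^{-1}\bigr).\]
Each $m$ arises from at most one $h$, so the overcounting is harmless. The factor $k$ lives only in the length of the $m$-range.

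Next I apply the classical lemma (Vinogradov/Vaughan) for $|x-a/p|\le p^{-2}$ with $\gcd(a,p)=1$: for $U\ge1$,
\[\sum_{m\le U}\min\bigl(N,\|mx\|^{-1}\bigr)\lesssim\Bigl(\frac{U}{p}+1\Bigr)\bigl(N+p\log p\bigr).\]
This lemma is not stated earlier in the paper, so I would prove it briefly. Split $m\le U$ into at most $U/p+1$ blocks of $p$ consecutive integers. Within a block, write $mx=ma/p+m\theta$ with $|m\theta|$ controlled. Because $\gcd(a,p)=1$, the points $mx$ in a block are spread like the residues $j/p$, each perturbed by $O(1/p)$. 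Hence at most $O(1)$ of them lie within $1/p$ of an integer, and each such point contributes $\le N$. The remaining points contribute $\sum_{j=1}^{p}p/j\lesssim p\log p$.

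Taking $U=2kN$ yields
\[|S|^2\lesssim N+\Bigl(\frac{kN}{p}+1\Bigr)\bigl(N+p\log p\bigr)\lesssim \frac{kN^2}{p}+kN\log p+N+p\log p,\]
after using $kN/p\cdot p\log p=kN\log p$. Taking square roots and using $\sqrt{A+B}\le\sqrt A+\sqrt B$ gives
\[|S|\lesssim \sqrt{k}\Bigl(\frac{N}{\sqrt p}+\sqrt{N\log p}\Bigr)+\sqrt{N}+\sqrt{p\log p}.\]
The $\sqrt N$ term is absorbed by $\sqrt{k}\sqrt{N\log p}$ once $p\ge 3$; the cases $p=2$ (and $k\ge1$) are covered by adjusting the constant. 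This matches the claimed bound.

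The main obstacle is the second step, the reciprocal-sum lemma with the perturbation $\theta$. Within one block the spacing of the points $mx$ is only guaranteed to be $\gtrsim 1/p$ when the accumulated drift $p|\theta|\le 1/p$ is small relative to $1/p$. That holds exactly because $|\theta|\le p^{-2}$, but I must check the constants so that each residue class modulo $1/p$ still receives $O(1)$ points. I would handle this by the standard argument: for $m_1\ne m_2$ in one block, $\|(m_1-m_2)x\|\ge \|(m_1-m_2)a/p\|-1/p\ge$ something of order $1/p$ unless $(m_1-m_2)a\equiv \pm1$ or $0 \pmod p$. That exceptional situation only doubles the count. I would also make sure $k$ is treated as a positive integer, since the statement implicitly assumes this.
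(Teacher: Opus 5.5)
Your proposal is correct and follows the paper's proof essentially step for step. You use Weyl differencing to reach $N+\sum_{h<N}\min\bigl(N,\|2khx\|^{-1}\bigr)$, substitute $m=2kh$ over the range $m\le 2kN$, apply the reciprocal-sum bound near $a/p$, take square roots, and absorb the $\sqrt{N}$ term. The only difference is that the paper cites that reciprocal-sum bound as (9) in Chapter 3 of Montgomery's \emph{Ten lectures} with $H=2Nk$, whereas you sketch its proof via the block-of-$p$ argument, which is sound since $|r\theta|\le (p-1)/p^2<1/p$ within each block.
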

\begin{proof}
Let $S_k=\sum\limits_{n=M}^{M+N-1} e(k n^2 x)$. By Weyl's differencing, we obtain
\begin{eqnarray} \label{Sk2}
|S_k|^2 &=&\sum_{n=M}^{M+N-1} \sum_{m=M}^{M+N-1} e\left(km^2x-kn^2x\right)\nonumber \\
&=&\sum_{n=M}^{M+N-1}\sum_{h=M-n}^{M+N-1-n} e\left(k(n+h)^2x-kn^2x\right)\nonumber  \\
&=&N+2{\rm Re}\sum_{h=1}^{N-1}\sum_{n=M}^{M+N-h-1} e\left(k(n+h)^2x-kn^2x\right)\nonumber \\
&\lesssim& N + \sum_{h=1}^{N-1} \left| \sum_{n=M}^{M+N-h-1} e\left(k(n+h)^2x-kn^2x\right) \right|\nonumber \\
&=&N + \sum_{h=1}^{N-1} \left| \sum_{n=M}^{M+N-h-1} e(2khxn) \right|\nonumber \\
&\lesssim& N + \sum_{h=1}^{N-1} \min\left(N, \frac{1}{\|2khx\|}\right).
\end{eqnarray}

\par Let $m = 2k h$, then
\begin{equation} \label{minNkhx}\sum_{h=1}^{N-1}\min\left(N, \frac{1}{\|2khx\|}\right)
\lesssim\sum_{m=1}^{2Nk} \min\left(N, \frac{1}{\|m x\|}\right).\end{equation}
Applying (9) in \cite[Chapter 3]{Montgomery1994} for $H=2Nk$ results in
\[\sum_{m=1}^{2Nk}\min\left(N, \frac{1}{\|m x\|}\right)\lesssim\frac{kN^2}{p}+Nk\log p+N+p\log p.\]
\par Substituting this into (\ref{Sk2}) and (\ref{minNkhx}) leads to
\[|S_k|^2\lesssim\frac{kN^2}{p}+Nk\log p+N+p\log p,\]
which implies
\[|S_k|\lesssim\sqrt{k}\left(\frac{N}{\sqrt{p}}+\sqrt{N\log p}\right)+\sqrt{p\log p}.\]
\end{proof}

\par The next lemma gives a non-trivial estimate for incomplete sums of \(g(n^{2}x)\) when \(x\) is near a rational.
\begin{lemma}\label{gn2x-a/p}
Let $g(x)$ be a Lipschitz continuous function on $\mathbb{R}$ with period 1.
Let $M, N$ and $P$ be positive integers, $1\leq a<p$, $\gcd(a, p)=1$, and $\left|x-\frac{a}{p}\right|\leq\frac{1}{p^2}$.
Then\[\left|\sum_{n=M}^{M+N-1}g(n^2 x)\right|\lesssim\frac{N}{\sqrt{p}}\sqrt{\log p}+\sqrt{N}\log p+\sqrt{p}\log p.\]
\end{lemma}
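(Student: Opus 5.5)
The plan is to expand $g$ in its Fourier series, apply the Weyl-type bound of Lemma \ref{weyl} to each frequency up to a cutoff $K$, bound the remaining frequencies trivially, and control the Fourier coefficients with Lemma \ref{Fourier-coefficient}. Throughout we take $g$ to have mean zero, $C_0=\int_0^1 g=0$. This is implicit in the statement: otherwise the constant term contributes $C_0N$, which is not dominated by the right-hand side. In the application to $G_\delta$ this costs nothing, because a constant $g$ adds only $C_0\zeta(1+\delta)$ to $G_\delta$.

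\textbf{Expansion and reduction to positive frequencies.} Since $g$ is Lipschitz, Lemma \ref{Fourier-coefficient} gives $\sum_k k^2|C_k|^2<\infty$. By Cauchy--Schwarz, $\sum_k|C_k|<\infty$, so the Fourier series converges absolutely and uniformly. Hence
\[\sum_{n=M}^{M+N-1}g(n^2x)=\sum_{k\neq0}C_k\,S_k,\qquad S_k=\sum_{n=M}^{M+N-1}e(kn^2x).\]
Since $S_{-k}=\overline{S_k}$, it suffices to treat $k\ge1$. We fix a cutoff $K\ge2$ and split the sum into $1\le k\le K$ and $k>K$.

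\textbf{Low frequencies.} For $k\le K$ we apply Lemma \ref{weyl}, which is valid because $|x-a/p|\le p^{-2}$. This gives
\[\sum_{k\le K}|C_k||S_k|\lesssim\Bigl(\sum_{k\le K}|C_k|\sqrt k\Bigr)\Bigl(\frac{N}{\sqrt p}+\sqrt{N\log p}\Bigr)+\Bigl(\sum_{k\le K}|C_k|\Bigr)\sqrt{p\log p}.\]
By Cauchy--Schwarz against $\sum k^2|C_k|^2$ we have
\[\sum_{k\le K}|C_k|\sqrt k\le\Bigl(\sum k^2|C_k|^2\Bigr)^{1/2}\Bigl(\sum_{k\le K}k^{-1}\Bigr)^{1/2}\lesssim\sqrt{\log K},\]
and similarly $\sum_k|C_k|\lesssim1$.

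\textbf{High frequencies.} For $k>K$ we use only the trivial bound $|S_k|\le N$. Cauchy--Schwarz then gives
\[\sum_{k>K}|C_k|\le\Bigl(\sum k^2|C_k|^2\Bigr)^{1/2}\Bigl(\sum_{k>K}k^{-2}\Bigr)^{1/2}\lesssim K^{-1/2},\]
so the tail contributes $\lesssim N/\sqrt K$.

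\textbf{Choice of cutoff.} We take $K=p$. The tail is then $\lesssim N/\sqrt p$. The low-frequency part is
\[\lesssim\sqrt{\log p}\Bigl(\frac{N}{\sqrt p}+\sqrt{N\log p}\Bigr)+\sqrt{p\log p}=\frac{N\sqrt{\log p}}{\sqrt p}+\sqrt N\log p+\sqrt{p\log p}.\]
Since $\sqrt{p\log p}\le\sqrt p\log p$ for $p\ge3$ and $N/\sqrt p\le N\sqrt{\log p}/\sqrt p$, the total is bounded by the claimed
\[\frac{N}{\sqrt p}\sqrt{\log p}+\sqrt N\log p+\sqrt p\log p.\]
The small cases $p=2$ (where $\log p$ is bounded) are absorbed into the implied constants.

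The main obstacle is balancing the cutoff $K$. Lemma \ref{weyl} grows like $\sqrt k$, while the only decay available for the coefficients is the $\ell^2$ bound $\sum k^2|C_k|^2<\infty$. The choice $K=p$ makes the trivial tail $N/\sqrt K$ match the main term $N/\sqrt p$, at the price of only the factor $\sqrt{\log K}=\sqrt{\log p}$ from the harmonic sum. One should also check that the implied constant in Lemma \ref{weyl} is uniform in $k$ and $M$; granting that, the rest is routine.
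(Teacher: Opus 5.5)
Your proof is correct and follows the paper's argument step for step: expand $g$ in its Fourier series, apply Lemma \ref{weyl} for $1\le k\le p$, use the trivial bound $N$ for $k>p$, and apply Cauchy--Schwarz against $\sum k^2|C_k|^2$ from Lemma \ref{Fourier-coefficient}, with the same cutoff $K=p$. Your explicit mean-zero assumption is exactly what the paper uses silently when it writes $\bigl|\sum_{k\in\mathbb{Z}}C_kS_k(x)\bigr|\le 2\sum_{k\ge1}|C_k||S_k(x)|$. Your pairing of the $\sqrt{p\log p}$ term with $\sum_k|C_k|\lesssim 1$, rather than with $\sum_{k\le p}|C_k|\sqrt{k}$, gives a marginally sharper bound than the paper's.
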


\begin{proof}
Let $S_k(x)=\sum\limits_{n=M}^{M+N-1} e(k n^2 x)$. Then
\begin{equation} \label{gn2Sk}
\left|\sum_{n=M}^{M+N-1}g(n^2 x)\right|=\left|\sum_{n=M}^{M+N-1}\sum_{k\in \mathbb{Z}}C_k e(kn^2x) \right|
=\left|\sum_{k\in\mathbb{Z}}C_kS_k(x)\right|\leq 2\sum_{k=1}^{\infty} |C_k|\cdot|S_k(x)|.
\end{equation}
Utilizing Lemma \ref{weyl} gets
\[|S_k(x)|\lesssim\min\left(N, \sqrt{k} \left( \frac{N}{\sqrt{p}}+\sqrt{N\log p}\right)+\sqrt{p\log p}\right),\]
inserting this into (\ref{gn2Sk}) yields
\begin{equation}\label{gn2Ck}
\left|\sum_{n=M}^{M+N-1}g(n^2x)\right|\lesssim\left(\frac{N}{\sqrt{p}}+\sqrt{N\log p}+\sqrt{p\log p}\right)\sum_{k=1}^p |C_k|
\sqrt{k}+N\cdot\sum_{k=p+1}^{\infty}|C_k|.
\end{equation}
By Lemma \ref{Fourier-coefficient}, we have
\begin{equation}\label{Ck-sqrtk}
\sum_{k=1}^p|C_k|\sqrt{k}=\sum_{k=1}^p\frac{|C_k| k}{\sqrt{k}}\leq \left(\sum_{k=1}^p|C_k|^2 k^2\right)^{1/2}
\left( \sum_{k=1}^p\frac{1}{k}\right)^{1/2}\lesssim\sqrt{\log p}. \end{equation}

\par Also, applying Cauchy-Schwarz inequality gains
\begin{equation}\label{Ck}
\sum_{k=p+1}^{\infty}|C_k|=\sum_{k=p+1}^{\infty} \frac{|C_k| k}{k}
\leq \left(\sum_{k=p+1}^{\infty}|C_k|^2 k^2 \right)^{1/2}\left(\sum_{k=p+1}^{\infty}\frac{1}{k^2}\right)^{1/2}
\lesssim\frac{1}{\sqrt{p}}.\end{equation}
Substituting (\ref{Ck-sqrtk}) and (\ref{Ck}) into (\ref{gn2Ck}) results in
\[\left|\sum_{n=M}^{M+N-1}g(n^2 x)\right|\lesssim\frac{N}{\sqrt{p}}\sqrt{\log p}+\sqrt{N}\log p+\sqrt{p}\log p,\]
which gives the desired estimate.
\end{proof}

\subsection{Proof of Theorem \ref{thm:upper}}
We now turn to restate and prove Theorem \ref{thm:upper} as follows.
\begin{theorem}{\rm(Theorem \ref{thm:upper})}
Let \(g:\mathbb{R}\to\mathbb{R}\) be a {\rm 1}-periodic function. If $g'$ is Lipschitz continuous, then
\[\overline{\dim}_{\mathrm{B}}\bigl(graph(G_\delta)\bigr)\le\frac74-\frac{\delta}{2}.\]
\end{theorem}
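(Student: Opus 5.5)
Via \eqref{ANf}, it suffices to show that for large $N$
\[\sum_{j=0}^{N-1}{\rm osc}_j(G_\delta)\lesssim_\epsilon N^{-1/4-\delta/2+\epsilon}\]
for every $\epsilon>0$. Then $A_N(G_\delta)\lesssim N+N^{3/4+1-\delta/2+\epsilon}$, and since $\frac74-\frac\delta2\ge \frac54>1$, letting $\epsilon\to0$ gives the claim.

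To estimate the oscillation on an interval $I_j$ of length $1/N$, fix $x,y\in I_j$ with $|x-y|\le 1/N$ and split the series at a cutoff $L$ to be chosen:
\[G_\delta(x)-G_\delta(y)=\sum_{n\le L}\frac{g(n^2x)-g(n^2y)}{n^{1+\delta}}+\sum_{n>L}\frac{g(n^2x)-g(n^2y)}{n^{1+\delta}}.\]
Since $g'$ is Lipschitz, Taylor gives
\[g(n^2x)-g(n^2y)=n^2(x-y)g'(n^2y)+O(n^4|x-y|^2).\]
The error contributes $\lesssim N^{-2}L^{4-\delta}$ to the low part. The main low term $(x-y)\sum_{n\le L}n^{1-\delta}g'(n^2y)$ and the tail $\sum_{n>L}n^{-1-\delta}g(n^2x)$ (and likewise at $y$) will both be handled by partial summation against the incomplete sums $\sum_{M\le n<M+K}g(n^2x)$ and $\sum g'(n^2x)$. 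These are controlled by Lemma \ref{gn2x-a/p}, applied to $g$ and also to $g'$, which is Lipschitz by hypothesis, so Lemma \ref{Fourier-coefficient} applies to it too. Note that $g'$ has zero mean, and so does $g$ after subtracting the constant $C_0$, which only adds a smooth term $C_0\zeta(1+\delta)$.

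Next, locate $x$ via the Farey dissection of order $N^{1/2}$: for $x\in J_{a/p}$ we have $|x-a/p|<1/(pN^{1/2})$. Lemma \ref{gn2x-a/p} needs $|x-a/p|\le 1/p^2$, so write $x=a/p+\beta$. When $|\beta|$ is larger, I would instead recenter the quadratic sum with a further Farey/Dirichlet approximation of denominator $q\asymp 1/(p|\beta|)$. Alternatively I would use the standard bound $\sum_{n\le K}e(kn^2x)\ll \sqrt k\,(K/\sqrt p+\sqrt p)$, valid for $|\beta|\ll 1/(pK)$, together with the van der Corput second-derivative bound when $|\beta|$ is large. I expect this rational-approximation bookkeeping to be the main obstacle, since Lemma \ref{gn2x-a/p} is stated only in the narrow range $|\beta|\le p^{-2}$.

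Heuristically the result is the Chamizo–Córdoba estimate: near $a/p$ the partial sums behave like $K/\sqrt p+\sqrt p$ up to logarithms. Partial summation then gives
\[{\rm osc}_j(G_\delta)\lesssim p^{-1/2}\min\bigl(|\beta|^{\delta/2},N^{-\delta/2}\bigr)+(\text{terms in }\sqrt p)\]
with the cutoff $L\asymp N^{1/2}$. Finally, sum over $j$ by grouping the intervals $I_j$ into Farey arcs $J_{a/p}$: each arc contains $\lesssim N/(pN^{1/2})+1$ intervals, and there are $\lesssim p$ numerators for each $p\le N^{1/2}$. Summing the bound $p^{-1/2}N^{-\delta/2}$ times $N^{1/2}/p$ intervals over $a$ and $p$ gives
\[\sum_{p\le N^{1/2}}p\cdot p^{-1/2}\cdot\frac{N^{1/2}}{p}\,N^{-\delta/2}\cdot N^{-1/2}\asymp N^{1/4}N^{-\delta/2}N^{-1/2}=N^{-1/4-\delta/2}\]
up to $N^\epsilon$ losses from the logarithms, which is exactly the required bound. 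The $\sqrt p\log p$ error terms must be checked to be of the same or smaller order after being multiplied by the $|x-y|$ factors and divided by $L^{\delta}$. This check dictates choosing $L=N^{1/2}$, and I would verify it last.
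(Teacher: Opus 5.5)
Your architecture matches the paper's: Farey arcs of order $N^{1/2}$, a cutoff at $N^{1/2}$, partial summation against incomplete sums of $g$ and $g'$ via Lemma \ref{gn2x-a/p}, then summation over arcs. There is, however, a genuine gap in the low-frequency step as you set it up.

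\textbf{The gap: the Taylor remainder.} Per interval, the second-order remainder contributes
\[\sum_{n\le L}\frac{n^4|x-y|^2}{n^{1+\delta}}\asymp N^{-2}L^{4-\delta}.\]
At $L=N^{1/2}$ this equals $N^{-\delta/2}$. That is no better than the trivial Lipschitz bound $N^{-1}\sum_{n\le N^{1/2}}n^{1-\delta}$: for $n\approx N^{1/2}$ one has $n^2|x-y|\approx1$, so the quadratic term is as large as the linear one. Summed over the $N$ intervals this gives $\sum_j{\rm osc}_j\lesssim N^{1-\delta/2}$, hence only $\overline{\dim}_{\mathrm{B}}\le 2-\frac{\delta}{2}$.

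No other cutoff rescues it. Keeping the total remainder $N\cdot N^{-2}L^{4-\delta}$ below the target $N^{3/4-\delta/2}$ forces $L\lesssim N^{(7-2\delta)/(16-4\delta)}<N^{1/2}$. The tail, however, needs $L\gtrsim N^{1/2}$: for instance the $\sqrt n$ part of Lemma \ref{gn2x-a/p} gives $L^{-1/2-\delta}$ per interval, i.e. $NL^{-1/2-\delta}$ in total. Nor can the remainder be handled by exponential sums, since it involves $g''$, which is merely bounded.

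\textbf{The fix.} The paper avoids any remainder. It applies the mean value theorem to the whole truncated function $x\mapsto\sum_{n\le H}n^{-1-\delta}g(n^2x)$. The low part is then exactly $(x_0-y_0)\sum_{n\le H}n^{1-\delta}g'(n^2\xi)$ for a single $\xi\in I_k\cap J_{a/p}$. Because the bound of Lemma \ref{gn2x-a/p} for $g'$ holds at every point of the arc, partial summation gives $\lesssim_\varepsilon p^\varepsilon\bigl(H^{2-\delta}p^{-1/2}+H^{3/2-\delta}+H^{1-\delta}\sqrt p\bigr)$ directly. Equivalently, you could write the difference as $\int_y^x\sum_{n\le L}n^{1-\delta}g'(n^2t)\,dt$ and bound the integrand uniformly.

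\textbf{The obstacle you flagged is not one.} For $p\le N^{1/2}$ every point of $J_{a/p}$ satisfies $|x-a/p|<1/(pN^{1/2})\le 1/p^2$. So Lemma \ref{gn2x-a/p} applies on every arc, for partial sums of every length; this is exactly why the dissection has order $N^{1/2}$. No recentering or van der Corput argument is needed.

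\textbf{Exponent bookkeeping.} By \eqref{ANf}, $A_N\le N+N\sum_j{\rm osc}_j$, so the correct target is $\sum_j{\rm osc}_j\lesssim N^{3/4-\delta/2+\epsilon}$. The quantity $N^{-1/4-\delta/2}$ is the \emph{average} oscillation. As a bound on the sum it would give $A_N\lesssim N$, which is false, e.g. for $g(x)=\cos(2\pi x)$ by Theorem \ref{thm:lower}. Your arc sum $\sum_{p\le N^{1/2}}p\cdot\frac{N^{1/2}}{p}\cdot p^{-1/2}N^{-\delta/2}$ is actually $\asymp N^{3/4-\delta/2}$, which is exactly what is needed; the extra factor $N^{-1/2}$ is spurious.

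\textbf{The deferred check goes through.} With $L=N^{1/2}$, the $\sqrt n$ and $\sqrt p$ terms give $N^{5/4-\delta/2}p^{-1}$ and $N^{1-\delta/2}p^{-1/2}$ squares per arc. Each sums over the $\le p$ numerators and $p\le N^{1/2}$ to $N^{7/4-\delta/2}$, up to a factor $N^{\varepsilon}$.
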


\begin{proof}
\noindent Fix a large integer \(N\) and consider the Farey dissection $\{J_{a/p}\}$ of order \(N^{1/2}\).
Let \(\lambda(a/p;N)\) be the number of grid squares of side \(1/N\) needed to cover
\(graph(G_\delta)\cap(J_{a/p}\times\mathbb{R})\),
 and denote by $\lambda\left(\frac{a}{p};\, N,\, k\right)$ the number of
 those squares contained inside the strip $I_k \times \mathbb{R}$,
 where $I_k = \left[\frac{k}{N},\, \frac{k+1}{N}\right]$.

\par Define
\[S_n(x)=\sum_{k=1}^n g(k^2 x),\qquad \widetilde{S}_n(x)=\sum_{k=1}^n g'(k^2 x).\]
Then, given any $\varepsilon>0$, taking  $M=1$ and $N=n$ in Lemma \ref{gn2x-a/p}
derives
\begin{equation}\label{gSnx}
|S_n(x)|\lesssim\frac{n}{\sqrt{p}}\sqrt{\log p}+\sqrt{n}\log p+\sqrt{p} \log p
\lesssim_\varepsilon p^\varepsilon \left( \frac{n}{\sqrt{p}} + \sqrt{n} + \sqrt{p}\right)
\end{equation}  for all $x$ with $|x - a/p|\le 1/p^2$.
Replacing $g$ with $g^{\prime}$ in (\ref{gSnx}) produces
\begin{equation}\label{g'Snx}
|\widetilde{S}_n(x)|\lesssim_\varepsilon p^\varepsilon\left(\frac{n}{\sqrt{p}}+\sqrt{n}+\sqrt{p}\right).
\end{equation}
\par By the definition of $\lambda(a/p; N, k)$ and mean value theorem, there exist $\xi, x_0, y_0 \in I_k \cap J_{\frac{a}{p}}$ such that
\begin{eqnarray}\label{lambdaap}
\lambda\left(\frac{a}{p}; N, k\right)&\le& 2+N\cdot\sup_{x,y\in I_k\cap J_{\frac{a}{p}}}|G_{\delta}(x)-G_{\delta}(y)|\nonumber \\
&=&2+N|G_{\delta}(x_0)-G_{\delta}(y_0)|\nonumber \\
&\le&2+N\left|\sum_{n\le H}n^{1-\delta}g'(n^2 \xi)\right|+N\left|\sum_{n>H}\frac{1}{n^{1+\delta}} \bigl(g(n^2 x_0)-g(n^2y_0)\bigr)\right|,
\end{eqnarray}
where $H$ is a  positive integer to be chosen later.

\par Using Abel summation formula and (\ref{g'Snx}) obtains
\begin{eqnarray}\label{n1-deltag'}
\left|\sum_{n\le H} n^{1-\delta} g'(n^2 \xi)\right|
&\le&\left|\sum_{n\le H}\bigl[n^{1-\delta}-(n+1)^{1-\delta}\bigr]\widetilde{S}_n(\xi)\right|+H^{1-\delta}|\widetilde{S}_H(\xi)|\nonumber \\
&\lesssim_\varepsilon& p^\varepsilon\sum_{n\le H}\bigl[(n+1)^{1-\delta}-n^{1-\delta} \bigr] \left(\frac{n}{\sqrt{p}}+\sqrt{n}+\sqrt{p}\right)\nonumber\\
&\qquad& +p^\varepsilon H^{1-\delta}\left(\frac{H}{\sqrt{p}}+\sqrt{H}+\sqrt{p}\right)\nonumber \\
&\lesssim_\varepsilon& p^\varepsilon \left(H^{2-\delta}p^{-\frac{1}{2}}+H^{\frac{3}{2}-\delta}+H^{1-\delta}\sqrt{p}\right).
\end{eqnarray}
Applying the same argument as (\ref{n1-deltag'}) gets
\begin{eqnarray}\label{n-1-deltag}
&&\left|\sum_{n>H} \frac{1}{n^{1+\delta}}\bigl( g(n^2 x_0)-g(n^2 y_0)\bigr)\right|\nonumber \\
&\le&\left(\sum_{n>H}\bigl[n^{-1-\delta}-(n+1)^{-1-\delta}\bigr]\bigl(|S_n(x_0)|+|S_n(y_0)|\bigr)\right)\nonumber \\
&\qquad& +(H+1)^{-1-\delta} \bigl(|S_H(x_0)|+|S_H(y_0)|\bigr)\nonumber \\
&\lesssim_\varepsilon& p^\varepsilon \left( H^{-\delta} p^{-\frac{1}{2}}+H^{-\frac{1}{2}-\delta}+H^{-1-\delta}\sqrt{p}\right).
\end{eqnarray}
Taking $H=[\sqrt{N}]$ and substituting (\ref{n1-deltag'}) and (\ref{n-1-deltag})
into (\ref{lambdaap}) lead to
\[\lambda\left(\frac{a}{p}; N, k\right)\lesssim_\varepsilon p^\varepsilon \left( N^{1-\frac{\delta}{2}} p^{-\frac{1}{2}} + N^{\frac{3}{4}-\frac{\delta}{2}} + N^{\frac{1}{2}-\frac{\delta}{2}}\sqrt{p}\right).\]
\par Since the number of strips \(I_k\) intersecting \(J_{a/p}\) is
at most \(N|J_{a/p}|+2\) and \(|J_{a/p}|\lesssim1/(pN^{1/2})\),
\begin{align*}
\lambda\left(\frac{a}{p}; N\right)
&\lesssim_\varepsilon \frac{|J_{\frac{a}{p}}|}{N^{-1}} p^\varepsilon \left( N^{1-\frac{\delta}{2}} p^{-\frac{1}{2}} + N^{\frac{3}{4}-\frac{\delta}{2}} + N^{\frac{1}{2}-\frac{\delta}{2}} \sqrt{p} \right)\\
&\lesssim_\varepsilon p^\varepsilon \left( N^{\frac{3}{2}-\frac{\delta}{2}} p^{-\frac{3}{2}} + N^{\frac{5}{4}-\frac{\delta}{2}} p^{-1} + N^{1-\frac{\delta}{2}}p^{-\frac{1}{2}}\right).
\end{align*}
Hence, the number of squares in the grid $\mathcal{M}_N$ needed to cover the graph of $G_{\delta}(x)$ has the upper bound
\begin{align*}
&\sum_{p\le N^{\frac{1}{2}}} \sum_{(a,p)=1} p^\varepsilon \left( N^{\frac{3}{2}-\frac{\delta}{2}} p^{-\frac{3}{2}}+
N^{\frac{5}{4}-\frac{\delta}{2}} p^{-1} + N^{1-\frac{\delta}{2}} p^{-\frac{1}{2}} \right)\\
&\lesssim_\varepsilon \sum_{p\le N^{\frac{1}{2}}} p^{1+\varepsilon} \left( N^{\frac{3}{2}-\frac{\delta}{2}} p^{-\frac{3}{2}} + N^{\frac{5}{4}-\frac{\delta}{2}} p^{-1} + N^{1-\frac{\delta}{2}} p^{-\frac{1}{2}} \right)\\
&\lesssim_\varepsilon N^{\frac{7}{4}-\frac{\delta}{2}+\frac{\varepsilon}{2}}.
\end{align*}
\par Consequently, it follows that
\[\underline{\dim}_{\mathrm{B}}(\text{graph}(G_\delta)) \le \limsup_{N\to\infty}
\frac{\log\left( N^{\frac{7}{4}-\frac{\delta}{2}+\frac{\varepsilon}{2}}\right)}
{\log N}=\frac{7}{4}-\frac{\delta}{2}+\frac{\varepsilon}{2}.\]
Since \(\epsilon>0\) is arbitrary, we can conclude
\[\underline{\dim}_{\mathrm{B}}(\text{graph}(G_{\delta})) \le \frac{7}{4} - \frac{\delta}{2}.\]
\end{proof}

\section{The critical case: vanishing mechanism and its complexity}\label{sec:critical}

The lower bound in Theorem~\ref{thm:lower} hinges on the non-vanishing condition
\[\sum_{k=m^{2}d_0, m\in\mathbb{N}^+}C_k k^{\delta/2}\neq0.\] A natural
question arises: what happens when these sums vanish for any given square-free $d_0\in\mathbb{N}^+$?
For a fixed square-free $d_0$, if this sum vanishes, we call it the \emph{critical case};
otherwise, the \emph{uncritical case}.

\subsection{Illustrative examples}

We now examine two cases where the vanishing condition holds, demonstrating the richness of the critical case.
\begin{example}\label{ex:translation}
Let $g(x)=2^{\delta+1}\cos(2\pi x)-2\cos(8\pi x)$ {\rm(}with $0<\delta\le1${\rm)}.
Then $C_{\pm1}=2^{\delta}$, $C_{\pm4}=-1$, $C_k=0$ for $k\not\in\{\pm1,\pm4\}$.  A
direct computation shows for $d_0=1$ that
\[\sum_{\substack{k=m^{2}\\ m\in\mathbb{N}^+}}C_k k^{\delta/2}=C_1\cdot1^{\delta/2}+C_4\cdot4^{\delta/2}=2^{\delta}-2^{\delta}=0,
\]
so the condition of Theorem {\rm \ref{thm:lower}} fails.  Nevertheless, using the identity
\[G_{\delta}(x)=2^{\delta+1}F_{\delta}(x)-2F_{\delta}(4x) = -2^{\delta+1}F_{\delta}(x+\tfrac12),\]
where $F_{\delta}(x)=\sum_{n=1}^{\infty}\frac{\cos(2\pi n^{2}x)}{n^{1+\delta}}$, we see that $graph(G_{\delta})$ is merely a translate of
$graph(F_{\delta})$, hence its dimension is still $\frac{7}{4}-\frac{\delta}{2}$.  Thus the vanishing condition is not sufficient for
collapse; additional symmetries may preserve the dimension.
\end{example}

\begin{example}\label{ex:elliptic}
Let $E/\mathbb{Q}$ be a modular elliptic curve with Hasse--Weil $L$-function
$L(s,E)=\sum\limits_{n=1}^{\infty}a_{n}n^{-s}$, and consider
\[ F_{\delta}(x)=\sum_{n=1}^{\infty}\frac{a_{n}}{n^{1+\delta}}\cos(2\pi nx), \qquad 0<\delta<1.\]
Chamizo {\rm\cite{Chamizo2004}} proved that
$\dim_{\mathrm{B}}(\operatorname{graph}(F_{\delta}))=2-\delta$ for $0<\delta<1$.
We show that when $F_{\delta}$ is written in the Riemann-function form
$F_{\delta}(x)=\sum_{n=1}^{\infty}n^{-(1+\delta)}g(n^{2}x)$ for an appropriate
$1$-periodic function $g$ with Fourier coefficients $C_{k}$, the vanishing
condition
\[\sum_{\substack{k=m^{2}d,\\ m\in\mathbb{Z}}}C_k k^{\delta/2}=0\]
holds for every square-free positive integer $d$.

\par Expanding $g(n^{2}x)=\sum\limits_{k\in\mathbb{Z}}C_{k}e^{2\pi ikn^{2}x}$ and
interchanging summation {\rm(}justified below{\rm)} yields
\[F_{\delta}(x)=\sum_{l\in\mathbb{Z}}\!\left(\sum_{\substack{n\geq 1\\n^{2}\mid l}}
\frac{C_{l/n^{2}}}{n^{1+\delta}}\right)e^{2\pi ilx}.\]
Comparing with the Fourier expansion $\widehat{F}_{\delta}(l)=a_{|l|}/(2|l|^{1+\delta})$
for $l\neq 0$ gives
\begin{equation}\label{eq:FourierID}
\sum_{\substack{n\geq 1\\n^{2}\mid l}}\frac{C_{l/n^{2}}}{n^{1+\delta}}
=\frac{a_{l}}{2l^{1+\delta}}, \qquad l\geq 1.
\end{equation}
Fix a square-free integer $d\geq 1$ and write $l=dm^{2}$. Since $d$ is
square-free, $n^{2}\mid dm^{2}$ if and only if $n\mid m$. Setting $q=m/n$,
identity~\eqref{eq:FourierID} becomes
\[\sum_{q\mid m}\frac{C_{dq^{2}}}{(m/q)^{1+\delta}}
=\frac{a_{dm^{2}}}{2(dm^{2})^{1+\delta}}.\]
Multiplying both sides by $m^{1+\delta}$ and writing
$h(k):=a_{dk^{2}}/(2k^{1+\delta})$,
we obtain the Dirichlet convolution
$\sum\limits_{q\mid m}C_{dq^{2}}\,q^{1+\delta}=h(m)/d^{1+\delta}$,
and M\"{o}bius inversion formula {\rm(Lemma \ref{Mobius})} gives
\begin{equation}\label{eq:Mobius}
C_{dm^{2}}\,m^{1+\delta}
=\frac{1}{d^{1+\delta}}\sum_{k\mid m}\mu\!\left(\tfrac{m}{k}\right)h(k).
\end{equation}

\smallskip
The Hasse bound $|a_{n}|\lesssim_{\varepsilon}n^{1/2+\varepsilon}$ gives
$|a_{dk^{2}}|\lesssim_{\varepsilon}d^{1/2+\varepsilon}k^{1+2\varepsilon}$, hence
\begin{equation}\label{eq:hbound}
|h(k)|\lesssim_{\varepsilon} d^{1/2+\varepsilon}\,k^{2\varepsilon-\delta}.
\end{equation}
For fixed $d$, choosing $\varepsilon<\delta/2$ makes the exponent
$2\varepsilon-\delta-1<-1$, so
\begin{equation}\label{eq:hkbound}
  \sum_{k=1}^{\infty}\frac{|h(k)|}{k} < \infty.
\end{equation}
From~\eqref{eq:Mobius} and~\eqref{eq:hbound}, using $k^{2\varepsilon-\delta}\leq 1$
for $k\leq m$ when $\varepsilon<\delta/2$,
\[|C_{dm^{2}}|\lesssim_{\varepsilon}\frac{d^{\varepsilon-1/2-\delta}\,\tau(m)}{m^{1+\delta}},\]
where $\tau$ is the number-of-divisors function.
Since $\sum_{m=1}^{\infty}\tau(m)/m^{1+\delta}<\infty$ for $\delta>0$,
the series $\sum_{m=1}^{\infty}|C_{dm^{2}}|$ converges absolutely,
justifying the interchange of summation in the derivation above.

\smallskip
\noindent\textit{Vanishing of $S$.}
Substituting~\eqref{eq:Mobius} into $S$ and using
$(dm^{2})^{\delta/2}=d^{\delta/2}m^{\delta}$ yields
\[S=\frac{1}{d^{1+\delta/2}}\sum_{m=1}^{\infty}
\frac{1}{m}\sum_{k\mid m}\mu\!\left(\tfrac{m}{k}\right)h(k).\]
It remains to show that the right-hand side vanishes. Let
$M(t):=\sum_{l\leq t}\mu(l)/l$, then $M(t)\to 0$ as $t\to\infty$, and thus $C:=\sup_{t\geq 1}|M(t)|<\infty$,
which is equivalent to the prime number theorem {\rm(see e.g. \cite[Theorem 4.16]{Apostol1976})}.
Define the partial sum
\[T_N:=\sum_{m=1}^{N}\frac{1}{m}\sum_{k\mid m}\mu\!\left(\tfrac{m}{k}\right)h(k)
=\sum_{kl\leq N}\frac{h(k)}{k}\cdot\frac{\mu(l)}{l},\]
where the second equality is the substitution $l=m/k$, valid termwise.
Grouping over $l$ for each fixed $k$,
\[T_N = \sum_{k=1}^{N}\frac{h(k)}{k}\,M\!\left(\lfloor N/k\rfloor\right).\]
Given $\varepsilon>0$, choose $K$ such that $\sum\limits_{k>K}|h(k)|/k<\varepsilon$,
which is possible by~\eqref{eq:hkbound}. Then
\[ |T_N|\leq\sum_{k=1}^{K}\frac{|h(k)|}{k}\,\Bigl|M\!\Bigl(\lfloor\tfrac{N}{k}\rfloor\Bigr)\Bigr|
  +\,C\sum_{k>K}\frac{|h(k)|}{k}
  \leq \sum_{k=1}^{K}\frac{|h(k)|}{k}\,\Bigl|M\!\Bigl(\lfloor\tfrac{N}{k}\rfloor\Bigr)\Bigr|
  +\,C\varepsilon.\]
As $N\to\infty$, each $M(N/k)\to 0$ for fixed $k$, so the first sum tends
to $0$. Since $\varepsilon>0$ was arbitrary, $T_N\to 0$, and therefore $S=0$.

\par This example shows that the vanishing condition $S=0$ alone does not
determine the box dimension: indeed,
$\dim_{\mathrm{B}}(\operatorname{graph}(F_{\delta}))=2-\delta$, which is
strictly less than $\frac{7}{4}-\frac{\delta}{2}$ for $\frac{1}{2}<\delta<1$
and strictly greater than $\frac{7}{4}-\frac{\delta}{2}$ for
$0<\delta<\frac{1}{2}$, yet $S=0$ holds in both cases.
\end{example}

\par These examples illustrate that the vanishing condition can arise from
deep number theory (e.g., the Prime Number Theorem and $L$-functions) and
often indicates a reduction in fractal complexity.  They also show that the relation between the arithmetic sums and the dimension is subtle:
Example \ref{ex:translation} shows that additional symmetries can override its effect.
Nevertheless, in many natural situations vanishing is a strong indicator of dimension collapse.

\par Similarly,  denote for $0<\delta\le1$ that
\[F_{\delta}(x)=\sum_{n=1}^{\infty}\frac{f(n^{2}x)}{n^{1+\delta}}.\] For readers'
intuitive understanding complexity of the critical cases and comparing them with uncritical cases, some specific examples of
$graph(G_{\delta})$ and $graph(G_{\delta}, F_{\delta})$ are shown at the end (see Figures 1-6).

\section{Conclusion and further problems}\label{sec:conclusion}

We have established sharp box dimension bounds for generalized Riemann-type functions under mild conditions on $g$,
extending the classical results of Chamizo and C\'ordoba. A new arithmetic mechanism is identifies the vanishing of weighted
sums over square-free integers, which governs potential dimension collapse. Examples illustrate that this condition is intimately
linked to the Prime Number Theorem and analytic properties of $L$-functions.
\par Several questions remain open:
\begin{itemize}
  \item In the non-vanishing case, does the lower bound always give the exact dimension?  By Theorem \ref{thm:upper},
  this is indeed the case if the seed function $g$ satisfies certain regularity condition, but a general case remains unknown.
  \item Can one characterise the precise fractal dimension when the vanishing condition holds but the graph is not trivial (as in
      Example~\ref{ex:translation}). Specially, does there exist a trigonometric polynomial $g$ satisfying the vanishing condition and
      $\dim_{\mathrm{B}}(graph(G_{\delta}))<\frac74 -\frac{\delta}{2}$?
  \item What is the role of the Riemann $\zeta$-function and $L$-functions in determining the dimension ?  Example~\ref{ex:elliptic} points to
      deep connections with arithmetic geometry.
  \item To what extent can the regularity conditions on $g$ in Theorem~\ref{thm:lower} and Theorem~\ref{thm:upper} be weakened?
\end{itemize}
We hope that the ideas presented here will stimulate further research at the fertile interface between number theory and fractal geometry.


\begin{figure}\label{Graph1}
\centering
\includegraphics[width=6.5 in, height=3.1 in]{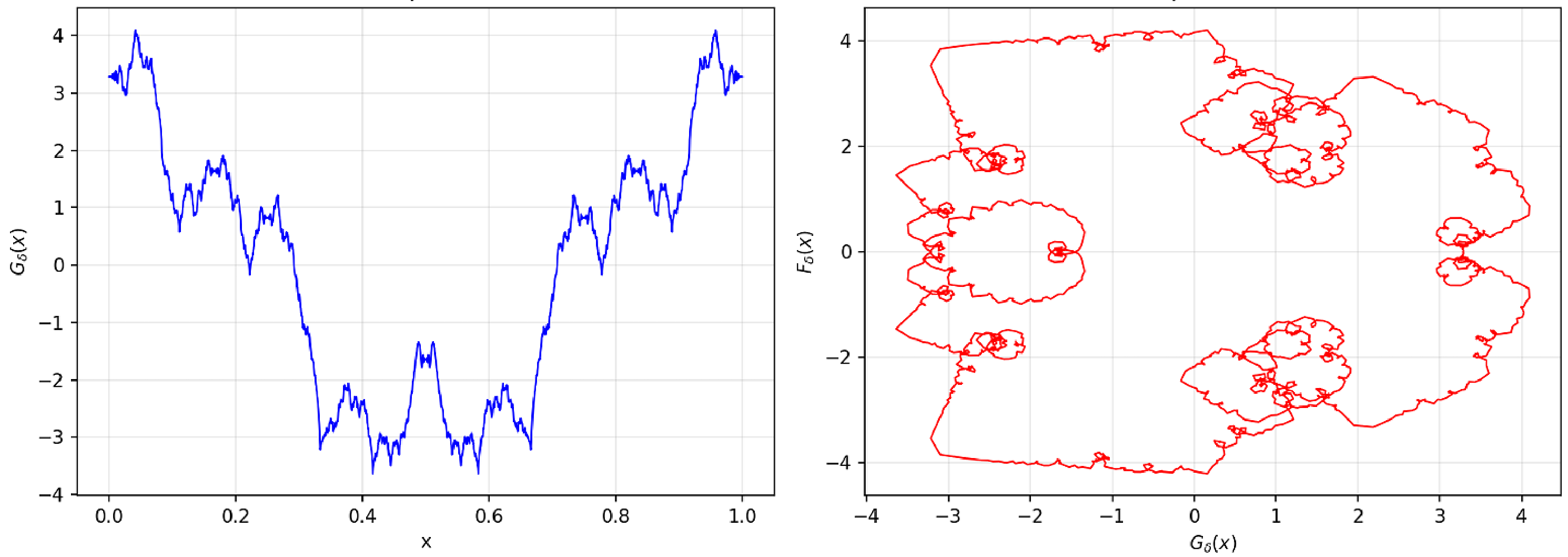}
\caption{$graph(G_{\delta})$ (in blue)
and $graph\big((G_{\delta}, F_{\delta})\big)$ (in red) with $\delta=1, g(x)=3\cos(2\pi x)-\cos(18\pi x)$ and
$f(x)=3\sin(2\pi x)-\sin(18\pi x)$ of critical case.}
\end{figure}

\begin{figure} \label{Graph2}
\centering
\includegraphics[width=6.5 in, height=3.1 in]{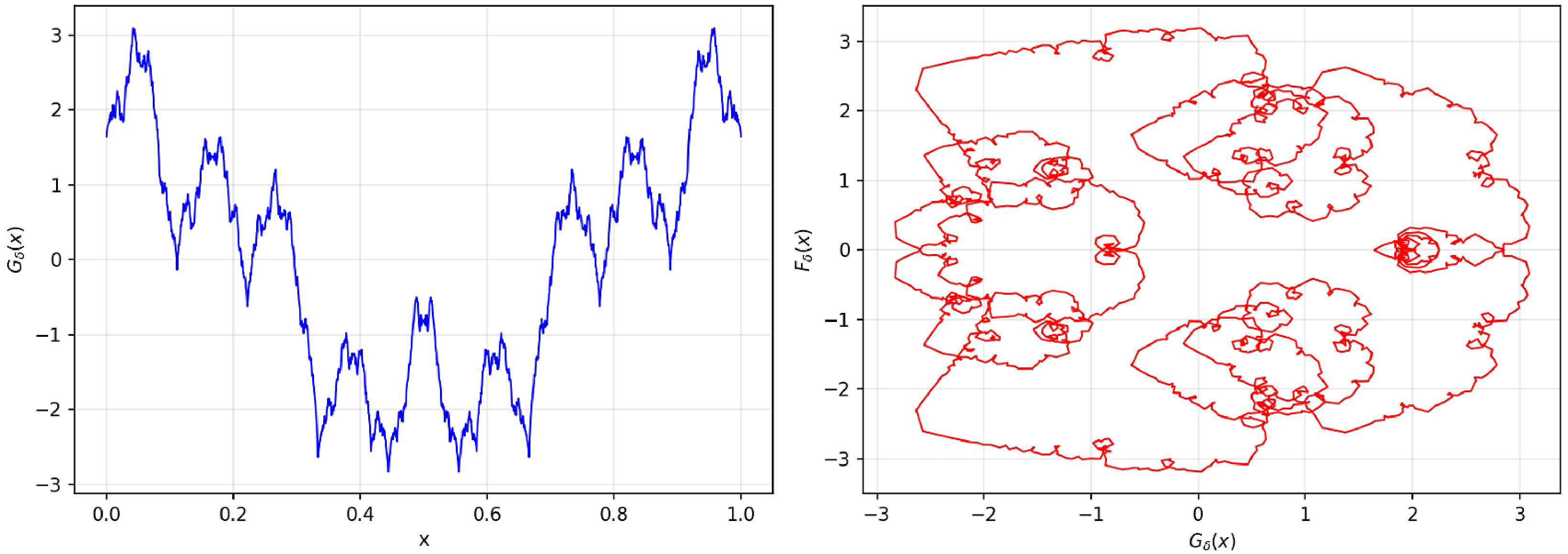}
\caption{$graph(G_{\delta})$ (in blue)
and $graph\big((G_{\delta}, F_{\delta})\big)$ (in red) with $\delta=1$, $g(x)=2\cos(2\pi x)-\cos(18\pi x)$ and
$f(x)=2\sin(2\pi x)-\sin(18\pi x)$ of uncritical case.}
\end{figure}

\begin{figure} \label{Graph3}
\centering
\includegraphics[width=6.5 in, height=3.1 in]{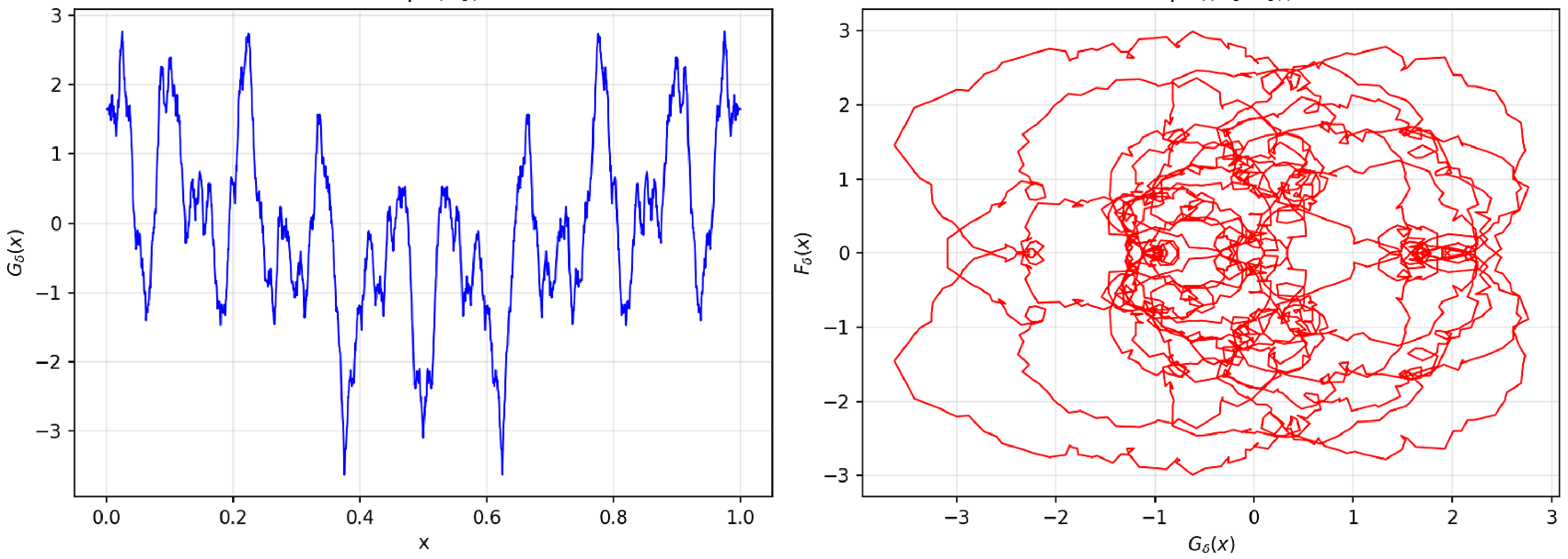}
\caption{$graph(G_{\delta})$ (in blue)
and $graph\big((G_{\delta}, F_{\delta})\big)$ (in red) with $\delta=1, g(x)=\cos(2\pi x)+\cos(18\pi x)
-\cos(32\pi x)$ and $f(x)=\sin(2\pi x)+\sin(18\pi x)-\sin(32\pi x)$ of critical case.}
\end{figure}

\begin{figure} \label{Graph4}
\centering
\includegraphics[width=6.5 in, height=3.1 in]{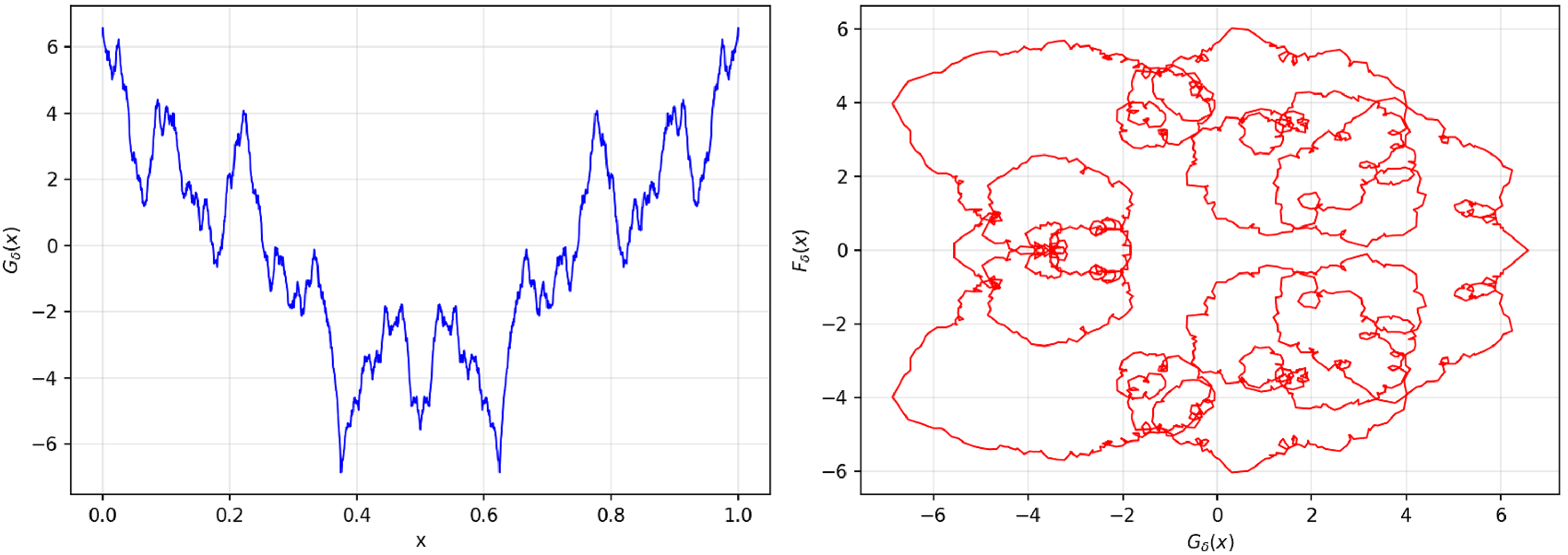}
\caption{$graph(G_{\delta})$ (in blue)
and $graph\big((G_{\delta}, F_{\delta})\big)$ (in red) with $\delta=1$ and $g(x)=4\cos(2\pi x)+\cos(18\pi x)
-\cos(32\pi x), f(x)=4\sin(2\pi x)+\sin(18\pi x)-\sin(32\pi x)$ of uncritical cases.}
\end{figure}

\begin{figure} \label{Graph5}
\centering
\includegraphics[width=6.5 in, height=3.1 in]{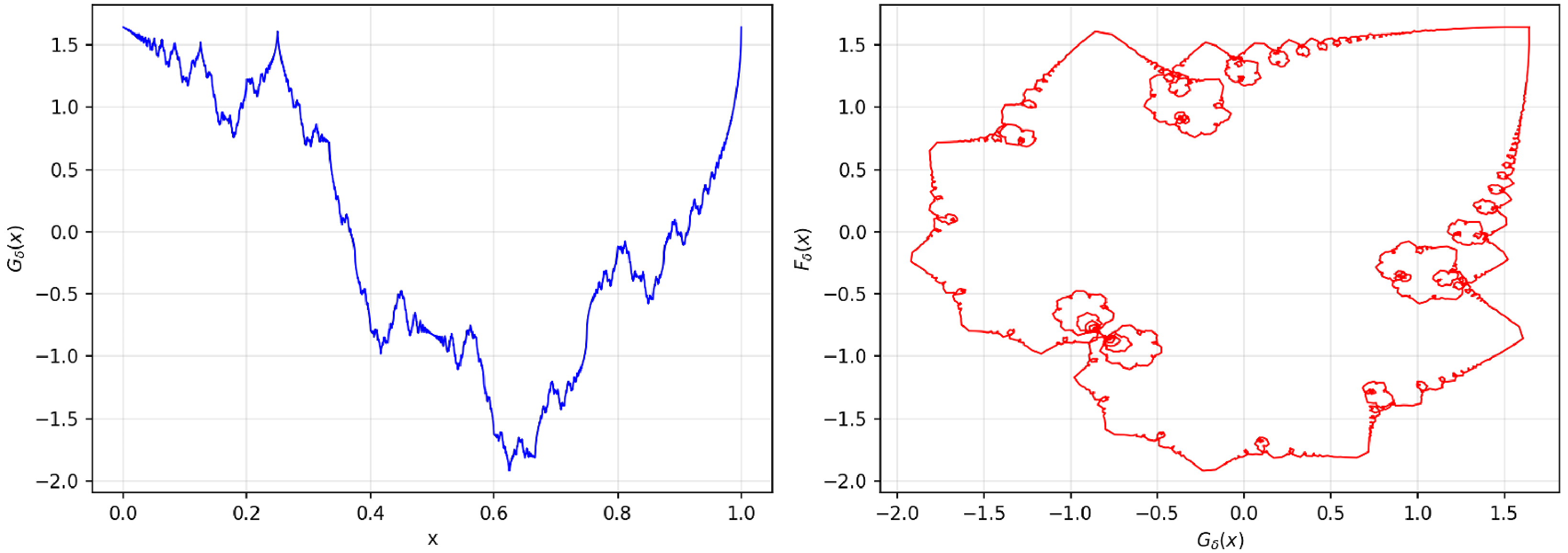}
\caption{$graph(G_{\delta})$ (in blue)
and $graph\big((G_{\delta}, F_{\delta})\big)$ (in red) with $\delta=1$, $g(x)=\sin(2\pi x)+\cos(2\pi x)$
and $f(x)=-\sin(2\pi x)+\cos(2\pi x)$.}
\end{figure}

\begin{figure} \label{Graph6}
\centering
\includegraphics[width=6.5 in, height=3.1 in]{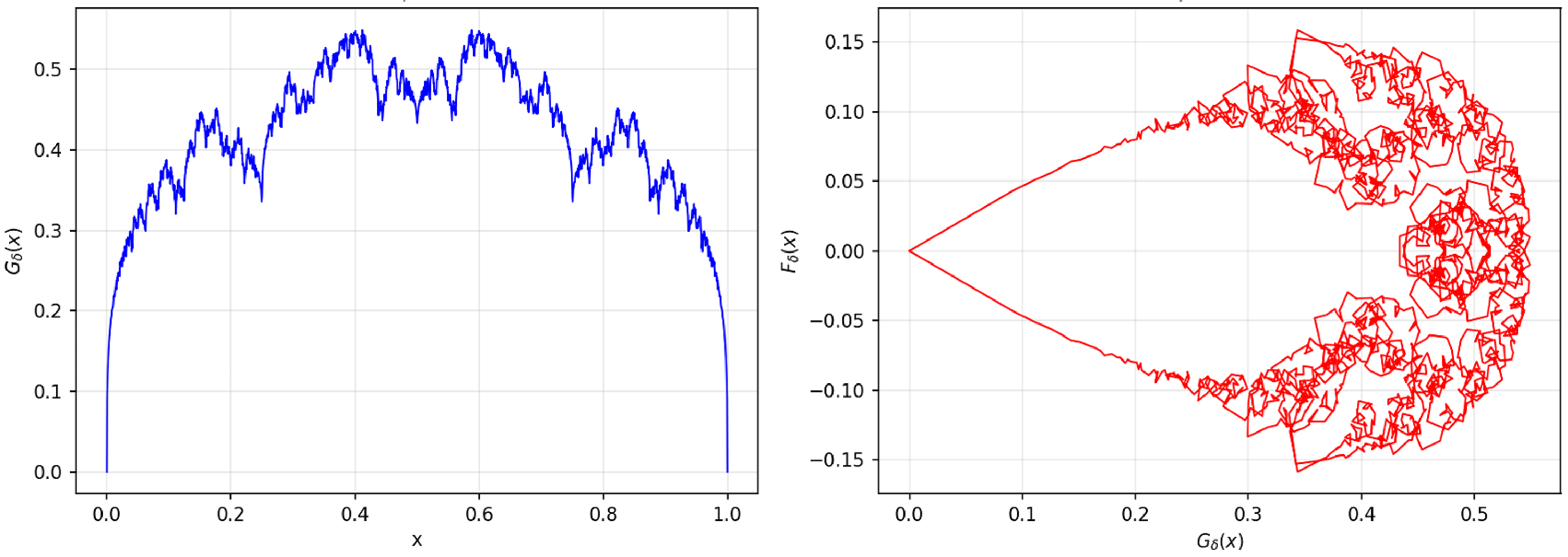}
\caption{$graph(G_{\delta})$ (in blue)
and $graph\big((G_{\delta}, F_{\delta})\big)$ (in red) with $\delta=\frac{1}{2}$, $g(x)=\{x\}(1-\{x\})
=\frac{1}{6}-\frac{1}{\pi^2}\sum\limits_{n=1}^{\infty}\frac{\cos(2\pi nx)}{n^2}$ and
$f(x)=\frac{1}{\pi^2}\sum\limits_{n=1}^{\infty}\frac{\sin(2\pi nx)}{n^2}$.}
\end{figure}


\begin{thebibliography}{99}
\bibitem{Apostol1976}
        Apostol, T. M.: Introduction to Analytic Number Theory. Springer-Verlag, New York, (1976)

\bibitem{Chamizo2004}
       Chamizo, F.: Automorphic forms and differentiability properties. Trans. Amer. Math. Soc.
{\bf 356}(5), 1909-1935 (2004)

\bibitem{Chamizo1999} Chamizo, F., C\'ordoba, A.: Differentiability and dimension of some fractal Fourier series.
                      Adv. Math. {\bf 142}(2), 335-354 (1999)

\bibitem{Cordoba2008} C\'ordoba, A.: Encounters at the interface between number theory and harmonic analysis,
       in: Proceedings of the "Segundas Jornadas de Teor\'ia de N\'umeros", Bibl. Rev. Mat. Iberoamericana, Madrid, 101-118 (2008)

\bibitem{Eceizabarrena2020} Eceizabarrena, D.: Geometric differentiability of Riemann's non-differentiable function.
                             Adv. Math. {\bf 366}, 107091, 39 pp. (2020)

\bibitem{Eceizabarrena2021} Eceizabarrena, D.: On the Hausdorff dimension of Riemann's non-differentiable function.
                             Trans. Amer. Math. Soc. {\bf 374}(11), 7769-7713 (2021)

\bibitem{Falconer2014}  Falconer, K. J.: Fractal Geometry: Mathematical Foundations and Applications.
                        3rd ed., John Wiley \& Sons, Chichester, UK, (2014)

\bibitem{Gerver1970} Gerver, J.: The differentiability of the Riemann function at certain rational multiples of $\pi$.
                     Amer. J. Math. {\bf 92}, 33-55 (1970)

\bibitem{Grafakos2014} Grafakos, L.: Classical Fourier Analysis. 3rd ed., Springer, New York, (2014)

\bibitem{Hadamard1896} Hadamard, J.: Sur la distribution des z\'eros de la fonction et ses cons\'equences arithm\'etiques.
Bulletin de la Soci\'et\'e Math\'ematique de France, {\bf 24}, 199-220 (1896)

\bibitem{Hardy1916} Hardy, G. H.: Weierstrass's non-differentiable function. Trans. Amer. Math. Soc.
                    {\bf 17}, 301-325 (1916)

\bibitem{HardyWright2008}
        Hardy, G. H., Wright, E. M.: An Introduction to the Theory of Numbers. 6th ed., Oxford University Press, (2008)

\bibitem{Hindry2011} Hindry, M.: Arithmetics. Springer-Verlag, (2011)

\bibitem{Hunt1998}
         Hunt, B. R.: The Hausdorff Dimension of Graphs of Weierstrass Functions, Proc. Amer. Math. Soc. {\bf 126}(3), 791-800 (1998)

\bibitem{Ireland-Rosen1990}
        Ireland, K., Rosen, M.: A Classical Introduction to Modern Number Theory. 2nd ed., Springer-Verlag, (1990)

\bibitem{Jaffard1996}
         Jaffard, S.: The spectrum of singularities of Riemann's function. Rev. Mat. Iberoamericana \textbf{12}(2), 441-460 (1996)

\bibitem{Montgomery1994} Montgomery, H. L.: Ten lectures on the interface between analytic number theory and harmonic analysis.
         CBMS Regional Conference Series in Mathematics, 84. Published for the CBMS, Washington, DC; by the AMS, Providence, RI, (1994)

\bibitem{Montgomery} Montgomery, H. L., Vaughan, R. C.: Multiplicative Number Theory I: Classical Theory.
                     Cambridge University Press, Cambridge (2007)

\bibitem{Shen-Ren2021} Ren, H., Shen, W.: A dichotomy for the Weierstrass-type functions.
                       Invent. Math., {\bf 226}(3), 1057-1100 (2021)

\bibitem{Rosen1984} Rosen, K.: Elementary Number Theory and Its Applications. 1st ed., Reading, Mass: Addison-Wesley (1984)

\bibitem{Shen2018} Shen, W.: Hausdorff dimension of the graphs of the classical Weierstrass functions.
                   Math. Z. {\bf289}(1-2), 223-266 (2018)

\bibitem{Stein1993a}Stein, E. M.: Harmonic Analysis: Real Variable Methods, Orthogonality and Oscillatory Integrals.
                    Princeton University Press, Princeton, (1993)


\end{thebibliography}
\end{document}